\documentclass[11pt]{amsart}
\usepackage{}
\usepackage{a4wide}
\usepackage{mathrsfs}
\usepackage{amsfonts}
\usepackage{amsmath,extpfeil}
\usepackage{stmaryrd}
\usepackage{amssymb}
\usepackage{amsthm}
\usepackage{mathrsfs}
\usepackage{url}
\usepackage{amsfonts}
\usepackage{amscd}
\usepackage{indentfirst}
\usepackage{enumerate}
\usepackage{amsmath,amsfonts,amssymb,amsthm}
\usepackage{amsmath,amssymb,amsthm,amscd}
\usepackage{graphicx,mathrsfs}
\usepackage{appendix}
\usepackage[numbers,sort&compress]{natbib}
\usepackage{color}
\usepackage[colorlinks, linkcolor=blue, citecolor=blue]{hyperref}

\newcommand{\R}{\mathbb{R}}

\newtheorem{theorem}{Theorem}[section]
\newtheorem{lemma}[theorem]{Lemma}
\newtheorem{corollary}[theorem]{Corollary}
\newtheorem{proposition}[theorem]{Proposition}

\newtheorem{remark}[theorem]{Remark}
\newtheorem{conjecture}{Conjecture}
\numberwithin{equation}{section}

\newcommand{\dis}{\displaystyle}
\newcommand{\lam}{\lambda}
\newcommand{\eps}{\varepsilon}
\newcommand{\pll}{\kern 0.56em/\kern -0.8em /\kern 0.56em}
\newcommand{\ome}{\Omega}
\newcommand{\beq}{\begin{equation}}
\newcommand{\eeq}{\end{equation}}

\begin{document}

\title[Free boundary regularity for the porous medium equation]
{Free boundary regularity for the porous medium equation}

\author[J. Liu]
{Jiakun Liu}
\address [Jiakun Liu]
	{School of Mathematics and Statistics,
	The University of Sydney,
	Camperdown, NSW 2006, AUSTRALIA}
\email{jiakun.liu@sydney.edu.au}

\author[J. Wang]{Junqing Wang}
\address [Junqing Wang]{Chern Institute of Mathematics, Nankai University, Tianjin 300071, CHINA}
\email{1120240020@mail.nankai.edu.cn}

\author[M. Zhou]{Maolin Zhou}
\address [Maolin Zhou]{Chern Institute of Mathematics, Nankai University, Tianjin 300071, CHINA}
\email{zhouml123@nankai.edu.cn}


\subjclass[2020]{35R35, 35K65, 35B65}

\keywords{free boundary, porous medium equation, regularity estimate}

\begin{abstract}
In this paper, we resolve the local cone conjecture proposed by Kienzler, Koch, and Vázquez concerning the free-boundary regularity of the porous medium equation. We prove that the local cone condition guarantees the smoothness of the free boundary lying outside the initial support. The proof relies on new gradient estimates obtained through maximum principle arguments and comparison principles. As an application, we establish the global-in-time smoothness of the free boundary for solutions with convex initial support under standard assumptions on the initial datum.
\end{abstract}

\date{\today}
\maketitle

\baselineskip=16.4pt
\parskip=3pt



\section{Introduction}\label{s1}
We consider the Cauchy problem for the porous medium equation
\begin{equation}
	\label{pme u}
	\tag{1.1}
	\left\{
	\begin{aligned}
		&u_t=\Delta(u^m) &&\text{ in } \R^n\times (0,\infty),\\
		&u(\cdot,0)=u_0 &&\text{ in } \R^n,
	\end{aligned}
	\right.
\end{equation}
where \(n\ge 1\) is an integer and \(m>1\) is a constant.  We assume that \(u_0\) is a non‑negative integrable function with compact support in $\R^n$.

As one of the simplest models of nonlinear parabolic equations, \eqref{pme u} has attracted extensive research interest since the 1950s.  We refer to \cite{V07} for a systematic presentation of the theory.  Compared with the heat equation, a distinctive feature of \eqref{pme u} is its degeneracy near \(u=0\).  This degeneracy gives rise to the phenomenon of \emph{finite propagation}: if the initial datum \(u_0\) is compactly supported, then the solution \(u(\cdot,t)\) remains compactly supported for all \(t>0\).  Consequently, the support of \(u\) evolves with time.  We denote the support by
\[
\Omega:=\{(x,t)\in \R^n\times\R: u(x,t)>0\},\qquad
\Omega(t):=\{x\in \R^n: u(x,t)>0\},
\]
and the free boundary by $\Gamma:=\partial \Omega$, $\Gamma(t):=\partial \Omega(t)$. 

Equation \eqref{pme u} also appears in a number of physical applications describing fluid flow or diffusion, most notably the flow of gas in a porous medium.  In this setting \(u\) represents the density of the gas and \(\displaystyle v:=\frac{m}{m-1}u^{m-1}\) stands for its pressure.  The pressure \(v\) satisfies
\begin{equation}
	\label{pme v}
	\tag{1.2}
	\left\{
	\begin{aligned}
		&v_t=(m-1)v\Delta v+|\nabla v|^2 &&\text{ in } \ome(t)\times (0,\infty),\\
		&v(\cdot,0)=v_0 &&\text{ in } \R^n,
	\end{aligned}
	\right.
\end{equation}
where \(\displaystyle v_0 = \frac{m}{m-1}u_0^{m-1}\).  By definition, \(v\) and \(u\) share the same support and therefore the same finite propagation property.  For simplicity we use the same notation for the support and free boundary of the pressure \(v\).  Moreover, assuming that \(v\) is smooth up to \(\Gamma\), equation \eqref{pme v} yields \(v_t = |\nabla v|^2\) on \(\Gamma\), which means that the normal speed of the free boundary at \(x\in\Gamma\) equals \(|\nabla v(x)|\).  This relation is known as Darcy’s law and also appears in the Stefan problem, a classical model of parabolic free boundary problems; we therefore refer to it as the Stefan boundary condition.

Physical prediction suggests that, in an ideal environment, the diffusion of gas proceeds smoothly.  
For free boundary problems, regularity is a difficult and highly important issue.  
In this paper we therefore study mainly the regularity of the free boundary and of the solution \(v\) near it.  
Motivated by the interplay between degeneracy and the Stefan boundary condition, we focus from now on equation~\eqref{pme v}.

The regularity theory for \eqref{pme v} has undergone substantial development over many years.  
In one space dimension the theory is relatively complete; we refer to Chapter~15 of~\cite{V07} for an introduction.  
Higher‑dimensional problems are considerably more difficult because of the intricate structure of the free boundary.  
Caffarelli-Friedman~\cite{CF80} proved that \(\Gamma\) and \(v\) are globally \(\alpha_0\)-H\"older continuous for some constant $\alpha_0\in\left(0,1\right)$.  
This H\"older regularity is in fact optimal, as Aronson-Graveleau~\cite{AG93} constructed an example—the so‑called focusing solution—that is merely H\"older continuous at the focusing time.

Several natural questions then present themselves, for instance: \emph{What is the regularity after a long time?}  
Caffarelli-V\'azquez-Wolanski~\cite{CVW87} showed that \(\Gamma\) and \(v\) are Lipschitz continuous for \(t>T_0\), where \(T_0\) is the first instant at which \(\Omega(T_0)\) covers a ball containing \(\Omega(0)\).  
They also introduced the non‑degeneracy condition (see \eqref{ndc}) on \(v_0\), which forces \(\Gamma(0)\) to move immediately at every point, and proved that, under this condition, \(\Gamma\) can be represented as \(t=S(x)\) for \(x\notin\Omega(0)\) with a Lipschitz function \(S\).  
In 1990, Caffarelli-Wolanski~\cite{CW90} improved the long‑time Lipschitz regularity of \(\Gamma\) to long‑time \(C^{1,\alpha}\) regularity.

A decade later, Daskalopoulos-Hamilton~\cite{DH98} exploited the non‑degeneracy condition to perform a coordinate change that transforms the degenerate free boundary problem into a degenerate problem with a fixed boundary, and established Schauder estimates for the transformed equation.  
Translating back to the original free boundary problem, they obtained short‑time \(C^{\infty}\) smoothness of \(\Gamma\) and \(v\) when the initial datum is of class \(C^{2,\alpha}\).  
Subsequently, Koch~\cite{K99} proved H\"older estimates for the transformed problem and lifted the long‑time \(C^{1,\alpha}\) regularity to \(C^{\infty}\) smoothness and even real analyticity.  

Regarding another question: \emph{Under what conditions can one obtain global-in-time smoothness?} Daskalopoulos-Hamilton-Lee~\cite{DHL01} introduced a root‑concavity condition for the initial datum, requiring that \(\sqrt{v_0}\) be concave.  
They  proved this root‑concavity is preserved under the flow and, by combining 
it with the coordinate change, obtained global \(C^{\infty}\) smoothness.  
It is natural to ask whether the root‑concavity condition can be relaxed to more general 
ones, such as \(\alpha\)-concavity for some \(0<\alpha<\frac12\) or mere convexity of 
\(\Omega(0)\) (note that \(\alpha\)-concavity of \(v_0\) implies convexity of \(\Omega(0)\)).  
A significant step in this direction was taken by Ishige-Salani~\cite{IS10}, who 
constructed an \(\alpha\)-concave initial datum for a certain \(\alpha\in(0,\frac12)\); 
however, the \(\alpha\)-concavity was not preserved by the equation, and in fact 
\(\Omega(T)\) failed to be convex at some finite time \(T\).  
This indicates that the methods of Daskalopoulos-Hamilton-Lee do not extend 
easily to weaker hypotheses, and the problem remains open.

Years later, Kienzler-Koch-V\'azquez~\cite{KKV18} used a different approach—the improvement of flatness method—to prove long‑time \(C^{\infty}\) smoothness without the non‑degeneracy condition and obtained finer estimates.  
In that paper they also proposed a conjecture that we call the local cone conjecture

\begin{conjecture}[\cite{KKV18}]
	\label{conj KKV}
	Fix a boundary point $(0,0)\in\Gamma$, if \(v\) is monotone along all directions of an open cone on $B_{1}\times\left[-1,0\right]$, then \(v\) is \(C^{\infty}\) smooth up to the boundary of the support away from the initial support on $B_{1}\times\left[-1,0\right]$, and $\nabla v \neq 0$ at the free boundary. Here $B_r:=\left\{x\in\R^n:|x|<r\right\}$.
\end{conjecture}

In the present paper we give an affirmative answer to the conjecture of Kienzler-Koch-V\'azquez~\cite{KKV18}.  We develop a criterion that prevents the formation 
of singularities and guarantees smooth evolution of the free boundary.  
To be precise, we define the \textbf{local cone condition} at the point \((0,0)\) 
as follows:
\begin{quote}
	\emph{For a unit vector \(\mu\) and angle \(0<\theta\leq\pi/2\), set
		\begin{align*}
			\mathcal{C}_{\mu,\theta}=\left\{V\in\R^n:V\cdot \mu\geq |V|\cos\theta\right\}.
		\end{align*}
		Then the local cone condition says
		\begin{align*}
			v(x+V,t)\geq v(x,t),
		\end{align*}
		whenever \(t\in[-1,0]\), \(V\in\mathcal{C}_{\mu,\theta}\), and spatial points \(x\), \(x+V\in B_1\).}
\end{quote}
Our first main result is the following.
\begin{theorem}
	\label{thm 1.1}
	Let \(u\) be the continuous weak solution of \eqref{pme u}. Suppose for some \(\mu\) and \(\theta\) as above, \(\dis v=\frac{m}{m-1}u^{m-1}\) satisfies \textbf{local cone condition} at the point \((0,0)\). Then, for any \((q,t_0)\in\Gamma\cap \left(B_1\times(-1,0]\right)\) with \(q\notin \overline{\ome(-1)}\), there exists a constant \(\delta_0>0\) such that \(v\) is smooth up to the free boundary \(\Gamma\), and \(\Gamma\) itself is smooth on \(B_{\delta_0}(q)\times[t_0-\delta_0,t_0]\). Moreover, \(\nabla v\neq0\) on \(\Gamma \cap \left( B_{\delta_0}(q)\times[t_0-\delta_0,t_0]\right)\). 
\end{theorem}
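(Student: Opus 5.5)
Our plan is to reduce Theorem \ref{thm 1.1} to a known regularity result under a non-degeneracy and Lipschitz hypothesis. Specifically, we aim to show near $(q,t_0)$ that:
\begin{enumerate}[(i)]
\item $\Gamma$ is a Lipschitz graph in space-time;
\item $v$ is Lipschitz;
\item $|\nabla v|\ge c>0$ on $\Gamma$, equivalently $v$ grows linearly away from the free boundary.
\end{enumerate}
Once these hold, the free boundary can be written locally as $t=S(x)$ with $S$ Lipschitz. The theory of Caffarelli--Wolanski \cite{CW90} then upgrades it to $C^{1,\alpha}$. After that, the Daskalopoulos--Hamilton coordinate change and Koch's estimates \cite{DH98,K99} for the transformed fixed-boundary degenerate problem bootstrap to $C^\infty$. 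Alternatively, the improvement-of-flatness theory of \cite{KKV18} can be used directly once Lipschitz flatness is available. So the real content is the gradient bounds (ii)--(iii), derived from the local cone condition.

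\textbf{Step 1: geometry from the cone condition.} The local cone condition says $\Omega(t)\cap B_1$ satisfies $x\in\Omega(t)\Rightarrow x+\mathcal{C}_{\mu,\theta}\cap B_1\subset\Omega(t)$. Hence $\Gamma(t)\cap B_{1/2}$ is a Lipschitz graph in the direction $\mu$, with Lipschitz constant at most $\cot\theta$ and uniform in $t$. Combined with monotonicity of the support in $t$ (since $v_t\ge -\frac{v}{(m-1)t}$-type Aronson--Bénilan bounds, or simply $u>0$ persists), this gives a space-time Lipschitz-type description of $\Gamma$. We also get a directional bound: $\partial_V v\ge 0$ for every $V\in\mathcal{C}_{\mu,\theta}$. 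This forces $|\nabla v|\le C\,\partial_\mu v$, because $\nabla v$ lies in the dual cone. So $\partial_\mu v$ controls the full gradient.

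\textbf{Step 2: upper gradient bound.} We would apply the maximum principle to an auxiliary quantity. Since $w=\partial_\mu v\ge0$ satisfies the linearized equation
\[
w_t=(m-1)v\Delta w+(m-1)w\Delta v+2\nabla v\cdot\nabla w,
\]
we combine it with a cutoff $\eta(x,t)$ and consider $\eta\,w$ or $\eta\, w^2/v^{\beta}$-type Bernstein functions. This yields $\sup|\nabla v|\le C$ on $B_{1/2}\times[-1/2,0]$, and Lipschitz continuity of $v$ follows. The cone property enters in absorbing cross terms through $|\nabla v|\le C w$.

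\textbf{Step 3: non-degeneracy (expected main obstacle).} We must show $v$ does not vanish to higher order at free boundary points $q\notin\overline{\Omega(-1)}$. Here we would use comparison with barriers. Since $q$ lies outside the initial support, $\Gamma$ reached $q$ by moving. Using the Lipschitz graph structure, we can place a cone/ball of positivity behind $q$ (in direction $-\mu$) at times slightly before $t_0$. We then compare $v$ from below with translated traveling-wave or Barenblatt-type subsolutions, whose pressure grows linearly with slope bounded below, inside such cones. This gives $v(q-s\mu,t_0)\ge cs$.

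To propagate this into a pointwise lower bound on $\partial_\mu v$ up to $\Gamma$, we would use a Harnack-type comparison for the nonnegative supersolution $w$, or a maximum-principle argument on $w-\epsilon v$. The delicate point is uniformity near time $t_0$ and ruling out waiting-time-type stagnation. We expect to use that the free boundary speed is $|\nabla v|$ and that it already advanced from $\Omega(-1)$, together with the monotonicity of $v$ in cone directions, to exclude stationary pieces.

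With (i)--(iii) established, the hypotheses of \cite{CW90,DH98,K99} hold on $B_{\delta_0}(q)\times[t_0-\delta_0,t_0]$, yielding smoothness of $v$ and $\Gamma$ and $\nabla v\neq0$ there.
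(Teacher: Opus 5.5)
Your final reduction matches the paper's. An upper bound on $|\nabla v|$ together with $D_\mu v\ge c$ in $\Omega$ near $(q,t_0)$ feeds directly into Theorem~\ref{thm tool}, so the detour through Caffarelli--Wolanski is not needed. Your Step~1 is Lemma~\ref{lem 3.1}. The gap is that Steps~2 and~3, which carry the whole theorem, assert the estimates without a mechanism that closes.

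\textbf{Step 2.} A cutoff/Bernstein estimate for $w=\partial_\mu v$ fails. The zeroth-order term $(m-1)w\Delta v$ has the wrong sign at a maximum of $\eta w$. The only control on $\Delta v$ is the one-sided Aronson--B\'enilan bound $\Delta v\ge -K_\delta$, and near $\Gamma$ the quantity $\Delta v=(v_t-|\nabla v|^2)/((m-1)v)$ is not bounded above. The same obstruction appears for $|\nabla v|^2$: its evolution contains $2(m-1)|\nabla v|^2\Delta v$, against a good term $-2(m-1)v|D^2v|^2$ that degenerates on $\Gamma$. A weight $v^{-\beta}$ makes $w^2/v^\beta$ infinite on $\Gamma$ precisely in the nondegenerate case. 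In all variants the maximum may sit on the degenerate boundary, where you have no boundary information, and $|\nabla v|\le C\partial_\mu v$ absorbs none of these terms. Your ``space-time Lipschitz'' description of $\Gamma$ in Step~1 also needs a speed bound, which is this same missing estimate.

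The idea you are missing is the passage to the level-set function $x_n=f(x',z,t)$. In these coordinates $\Gamma$ becomes the fixed boundary $z=0$, and the cone condition becomes $|D_{x'}f|\le L$. The paper works with smooth positive approximations $v_j\ge\gamma_j$ that preserve the cone condition (Section~\ref{s3}). It applies the maximum principle to $\tau e^z\chi\vartheta(f)(-f_t/H)$, with $H$ designed as in \eqref{def H}. That design makes $TH/H$ coercive in $E$, $Q$ and $\sqrt{f_z}f_t^2/H$. Aronson--B\'enilan enters only through $f_z^{-1}\le F\le -f_t+C\sqrt z$. All constants are independent of $\gamma$, which yields $-f_t\le CH$ and hence $|\nabla v|,|v_t|\le C_1$.

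\textbf{Step 3.} As stated, Step~3 is circular.
\begin{itemize}
\item A subsolution below $v$ gives linear growth at $(q,t_0)$ only if its free boundary passes through $q$ exactly at time $t_0$. It cannot pass beyond, since $v(q,t_0)=0$, and if it lags you get nothing.
\item Arranging that requires a lower bound on the speed of $\Gamma$ near $(q,t_0)$, which is the non-degeneracy itself. Your ``excluding stationary pieces'' is never resolved.
\item Positivity lies on the $+\mu$ side, so the bound should be for $v(q+s\mu,t_0)$, not $v(q-s\mu,t_0)$.
\end{itemize}

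The paper argues from above instead:
\begin{itemize}
\item Since $q\notin\overline{\Omega(-1)}$, Theorem~\ref{CF thm} gives $v(\cdot,t_0-\tau)\equiv 0$ on some $B_r(q)$.
\item It then builds the inf-convolution supersolution $g(s)\min_{|y-x|\le\eps\phi(x)}v(y+\eps r\mu,\zeta(s))$ with a time dilation $\zeta$ (Lemma~\ref{supsolution}). This construction uses the bound $|\nabla v|\le C_1$ from Step~2.
\item It compares this with $v$ on an annulus whose lateral boundaries are controlled by the cone condition and Aronson--B\'enilan. This shows $v(\cdot,t_0-\sigma)\equiv0$ on $B_{\eps r}(q)$ with $\sigma=O(\eps)$.
\item The Caffarelli--Friedman mass criterion (Proposition~\ref{CF prop}) then forces the mean of $v(\cdot,t_0)$ over $B_{\eps r}(q)$ to be $\gtrsim \eps^2/\sigma\gtrsim\eps$.
\item The cone condition converts this into linear growth along $\mu$, and rescaling plus Harnack (Theorem~\ref{thm low grad 2}) gives $D_\mu v\ge c$.
\end{itemize}

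Note that the paper's non-degeneracy step depends on the upper gradient bound, so the gap in Step~2 propagates into Step~3.
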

\begin{remark}
	\label{remark 1}
Set \(\mu=e_n\), \(L=\cot \theta<\infty\) and \(x=(x',x_n)\), where \(x'=(x_1,\cdots,x_{n-1})\). Then local cone condition also reads as for any \(\xi\in\R^{n-1}\), \(\eta\in\R\)
\begin{align*}
	v\left(x'+\xi,x_n+\eta,t\right)\geq v(x',x_n,t)\quad\text{if \(\eta\geq L|\xi|\)},
\end{align*}
whenever \(t\in[-1,0]\), spatial points \(x\), \((x'+\xi,x_n+\eta)\in B_1\).
\end{remark}

Since the local cone condition gives that if we can write the level set $\left\{v=z\right\}$ as $x_n=f(x',z,t)$, where $x'=(x_1,\cdots,x_{n-1})$, then $f(x',t,z)$ is Lipschitz with respect to $x'$. Hence, we mainly consider the equation of $f$ and construct a auxiliary function $H$ that plays the role like $v+\left|\nabla v\right|$. Then we use maximum principle arguments to prove $-f_t\leq CH$ and further obtain gradient estimates on $|Dv|$. With the gradient estimates, we can use H\"older estimates and Schauder estimates established in~\cite{DH98} and~\cite{K99} to obtain $C^{\infty}$ estimates.

We also ask to what extent the local cone condition is optimal.  The local cone condition rules 
out “collision” singularities.  A simple example of such a singularity is the 
collision of two separate supports as time increases.  The focusing solution 
constructed by Aronson-Graveleau~\cite{AG93} can also be viewed, in a certain 
sense, as a collision singularity.  All these examples indicate that the local cone 
condition is necessary for smoothness.  Hence, in a certain sense, the local cone 
condition is almost optimal.

As an application of Theorem~\ref{thm 1.1}, we can improve the result on the global-in-time smoothness of free boundary by Daskalopoulos-Hamilton-Lee~\cite{DHL01}.  We prove that if
\(\Omega(0)\) is convex, then once the free boundary begin to move, \(\Gamma\) and \(v\) are \(C^{\infty}\) smooth.
If \(v_0\) satisfies the non‑degeneracy 
condition additionally, then \(\Gamma\) and \(v\) are globally \(C^{\infty}\) smooth. Comparing with root-concavity, we don't make additional assumption on $v$ and only require $\ome(0)$ is convex. 
Our argument does not require the evolving support to remain convex.
More precisely, we establish the following result.
\begin{theorem}
	\label{thm 1.2}
	Let $u$ be the continuous weak solution of \eqref{pme u}. Suppose the initial support $\ome(0)$ is convex. Then, for any $(q,t_0)\in\Gamma\cap \left(\R^n\times(0,\infty)\right)$ with $q\notin \overline{\ome(0)}$, there exists a constant $\delta_0$ such that $v$ is smooth up to the free boundary $\Gamma$ and $\Gamma$ itself is smooth on $B_{\delta_0}(q)\times[t_0-\delta_0,t_0]$. Moreover, suppose initial datum $v_0\in C^1(\overline{\Omega(0)})$ additionally satisfies the non‑degeneracy condition
	\begin{equation}
		v_0+|\nabla v_0|\ge c>0 \qquad\text{on }\overline{\Omega(0)},
		\label{ndc}
		\tag{1.3}
	\end{equation}
	for some constant $c>0$. Then $v$ is smooth up to the free boundary 
	$\Gamma$, and $\Gamma$ itself is smooth, for all $t\in(0,\infty)$.
\end{theorem}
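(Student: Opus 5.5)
The plan is to reduce Theorem~\ref{thm 1.2} to Theorem~\ref{thm 1.1}. The reduction rests on the fact that, when $\Omega(0)$ is convex, every free-boundary point outside the initial support satisfies a local cone condition, via the Aleksandrov reflection principle. Fix $(q,t_0)\in\Gamma$ with $t_0>0$ and $q\notin\overline{\Omega(0)}$. By convexity there is a supporting hyperplane separating $q$ from $\overline{\Omega(0)}$ with positive distance. More precisely, for every unit vector $\nu$ in a small open neighbourhood $N$ of the direction $\nu_0$ pointing from $\Omega(0)$ toward $q$, the hyperplane $\Pi_\nu=\{x\cdot\nu=s_\nu\}$ separates $\Omega(0)$ (on the side $x\cdot\nu<s_\nu$) from a fixed neighbourhood $B_r(q)$.

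The first step is the reflection comparison. Let $x^*$ denote the reflection across $\Pi_\nu$ and set $\tilde u(x,t)=u(x^*,t)$. On the half-space $H=\{x\cdot\nu>s_\nu\}$ we have $\tilde u(\cdot,0)\ge u_0\equiv0$, and $\tilde u=u$ on $\partial H$. The comparison principle for the porous medium equation on $H\times(0,\infty)$ therefore gives $u(x^*,t)\ge u(x,t)$ for $x\in H$ and all $t>0$. Varying $s$ over all values beyond the support of $u_0$, and letting $\nu$ range over $N$, the standard argument of Caffarelli--V\'azquez--Wolanski yields radial monotonicity: $u$, hence $v$, is nonincreasing along every direction $\nu\in N$ in the region beyond the relevant hyperplanes. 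This region contains $B_r(q)\times[0,\infty)$ after shrinking $r$.

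In other words, $v(x+V,t)\ge v(x,t)$ for $V$ in the cone $\mathcal C_{-\nu_0,\theta}$ with some $\theta>0$ determined by $N$, whenever $x,x+V\in B_r(q)$. After translating $(q,t_0)$ to the origin and using the parabolic scaling $v_\lambda(x,t)=\lambda^{-1}v(\lambda x,\lambda t)$, which preserves \eqref{pme v} and the cone condition, the local cone condition holds on $B_1\times[-1,0]$. We choose $\lambda<t_0$ so that time $-1$ corresponds to $t_0-\lambda>0$. We also need $q\notin\overline{\Omega(t_0-\lambda)}$, which holds for small $\lambda$ whenever $q$ is not in the interior at earlier times. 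Since $q\in\Gamma(t_0)$ and supports are nondecreasing, we have $q\notin\Omega(t)$ for $t<t_0$. The closure condition requires $q\notin\overline{\Omega(t_0-\lambda)}$, which follows by applying the theorem at $(q,t_0)$ in the scaled picture, where $q\mapsto 0$. More carefully, we use the point $(0,0)$ in Theorem~\ref{thm 1.1}: the hypothesis there concerns boundary points near $0$ not in $\overline{\Omega(-1)}$. If $q$ lies on $\Gamma(t_0-\lambda)$ for every $\lambda$, the free boundary is stationary at $q$. This waiting-time situation is excluded because, by the cone monotonicity and the strictly positive mass on the side of $\Omega(0)$, $\Omega(t)$ strictly expands at $q$ at later times. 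Hence the first part of Theorem~\ref{thm 1.2} follows from Theorem~\ref{thm 1.1}.

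For the global statement under \eqref{ndc}, the condition implies that the free boundary moves immediately at every point of $\Gamma(0)$. By Daskalopoulos--Hamilton, $v$ and $\Gamma$ are smooth for $t\in(0,\tau)$ for some $\tau>0$. For $t\ge\tau$, every free-boundary point $q$ lies strictly outside $\overline{\Omega(0)}$, because the support has already strictly expanded by then. The first part then applies at every such point, giving smoothness for all $t>0$.

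The main obstacle I expect is twofold. First, the cone condition must be made uniform near $q$: the reflection argument must give monotonicity along an open set of directions on a full neighbourhood and time interval, not only at a single point. Second, the waiting-time/closure issue must be handled, which I would settle using the convexity of $\Omega(0)$ together with strict expansion.
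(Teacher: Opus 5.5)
Your overall structure matches the paper's. Reflection across hyperplanes that separate $\Omega(0)$ from a neighbourhood of $q$ gives a uniform cone of monotone directions there for all $t\ge0$; this is Proposition~\ref{monoto}, and your version of it is fine. One then rescales and applies Theorem~\ref{thm 1.1}. Under \eqref{ndc}, one shows every free-boundary point at positive time lies outside $\overline{\Omega(0)}$. Two steps, however, do not work as written.

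\textbf{The closure / waiting-time step.} Your one-parameter scaling $\lambda^{-1}v(q+\lambda x,t_0+\lambda t)$ forces $\lambda\le r$. So you must know $q\notin\overline{\Omega(t_0-\lambda)}$ at times close to $t_0$. Your claim that this ``holds for small $\lambda$ whenever $q$ is not in the interior at earlier times'' is exactly the absence of a waiting time at $q$. The reason you give (cone monotonicity plus positive mass behind the front) does not prove it. The solution $v=(x_n)_+^2/\bigl(2(m+1)(T-t)\bigr)$ is monotone along a full cone and has positive mass behind the front, yet its interface is stationary.

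What excludes waiting here is that the front reached $q$ by moving, and the tool is the Caffarelli--Friedman result, Theorem~\ref{CF thm}. Finite propagation gives $(q,0)\notin\Gamma$, so no $(q,t)$ with $0\le t<t_0$ lies on $\Gamma$. Since also $q\notin\Omega(t)$, you get $q\notin\overline{\Omega(t)}$ for all $t<t_0$. This is how the paper closes the step.

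You could also avoid the issue with the two-parameter invariance $v\mapsto \frac{T}{r^2}v(q+rx,t_0+Tt)$. Because your cone condition holds for all $t\ge0$, you may take $t_0-T$ so small that $q\notin\overline{\Omega(t_0-T)}$ by finite propagation. The no-waiting content is then supplied inside Theorem~\ref{thm 1.1}, whose proof invokes Theorem~\ref{CF thm}.

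\textbf{The global part.} Theorem~\ref{DH thm} needs $v_0\in C^{2,\alpha}$, but only $v_0\in C^1(\overline{\Omega(0)})$ is assumed. So the short-time regime cannot be handled by Daskalopoulos--Hamilton. It also does not need to be. The fact you assert without proof, that \eqref{ndc} makes the front move immediately in the form $\overline{\Omega(0)}\Subset\Omega(t)$ for every $t>0$, already places every free-boundary point at every positive time outside $\overline{\Omega(0)}$. The first part then applies for all $t>0$.

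That fact must be proved. The paper does it (Proposition~\ref{ndc force move}) by Barenblatt comparison:
\begin{itemize}
\item $C^1$ regularity and \eqref{ndc} give, for each $X\in\partial\Omega(0)$ and small $r$, a ball $B_{\gamma r}(y_r)\Subset\Omega(0)$ with $|y_r-X|<r$ and $v_0\ge c_1r$ on it.
\item The Barenblatt pressure centred at $y_r$ with height $c_1r/2$ and radius $\gamma r/2$ lies below $v_0$.
\item Its support contains $X$ after a time $t_r=O(r)$, so $v(X,t)>0$ for all $t>0$.
\item Interior points stay positive by Aronson--B\'enilan.
\end{itemize}
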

The main observation is that when \(\Omega(0)\) is convex, the Alexandrov reflection principle 
yields a monotonicity property for all positive times.  This monotonicity property 
immediately implies the local cone condition, and Theorem~\ref{thm 1.2} then follows 
from Theorem~\ref{thm 1.1}.  Our approach is essentially different from that of 
Daskalopoulos-Hamilton-Lee~\cite{DHL01} and overcomes the obstruction caused 
by the breakdown of convexity of the free boundary that was demonstrated in Ishige-Salani~\cite{IS10}.

To the end of this paper, we give a counterexample to show that if we replace the condition $\ome(0)$ is convex into that $\ome(0)$ is star-shaped, then the conclusion is not true. Regarding the geometric assumption on the initial support, this counterexample shows that convexity is almost optimal.

The paper is organised as follows.  Section~\ref{s2} collects notation and 
preliminary material on the porous medium equation, with particular emphasis on 
the local coordinate change.  In Sections~\ref{s3}, we prove a smooth approximation that preserves the local cone condition, which provides smoothness in the later sections. In Section~\ref{s4} we establish the upper gradient estimate for smooth solution, which turns out to be the key part of our paper. In Section~\ref{s5} we establish the lower gradient estimate with the upper gradient estimates obtained in Section \ref{s4}. Finally, in Section~\ref{s6} we prove two main theorems and construct the counterexample.



\section{Preliminaries}\label{s2}
In this section, for the reader’s convenience, we collect here some notation and preliminary results for the porous medium equation, with particular emphasis on the local coordinate change. We denote by $\Omega$ the support of $v$: $\Omega:=\left\{\left(x,t\right)\in\R^n\times\R:v\left(x,t\right)>0\right\}$, and set $\Omega(t):=\left\{x\in\R^n:v\left(x,t\right)>0\right\}$. The boundary are denoted by $\Gamma:=\partial \Omega$ and $\Gamma(t):=\partial \Omega(t)$. We write $B_r(x)$ for the open ball $\{y\in\R^n : |y-x|<r\}$ and set $B_r=B_r(0)$. We denote by $\langle \cdot,\cdot\rangle$ the standard scalar product on $\R^n$ and by $\operatorname{dist}(\cdot,\cdot)$ the associated distance. We write $\pi_{x_i}(x)=x_i$ for the projection onto the coordinate $x_i$. We denote by $x\otimes y=xy^T$ for two vectors. Finally, $C^{k,\alpha}$ stands for the H\"older space with exponent $\alpha\in(0,1)$.

We first state the classical Aronson-B\'enilan estimate.
\begin{proposition}[\cite{AB79}]
	\label{AB prop}
	For equation \eqref{pme v}, the following Aronson-B\'enilan inequality holds in the sense of distributions:
	\begin{align*}
		\Delta v\geq -\frac{n}{\left(n(m-1)+2\right)t}\quad\mbox{on $\R^n\times\left(0,\infty\right)$}.
	\end{align*}
	Moreover, for any $x\in\R^n$ and $0<s\leq t$
	\begin{align*}
		v(x,t)\geq \left(\frac{s}{t}\right)^{\frac{n(m-1)}{n(m-1)+2}}v(x,s).
	\end{align*}
\end{proposition}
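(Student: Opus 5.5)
Since this is the classical Aronson–B\'enilan estimate, I would prove it by a maximum principle on the pressure Laplacian for smooth positive approximations, and then pass to the limit. Set $\kappa:=m-1+\frac{2}{n}$, so that $\frac{1}{\kappa}=\frac{n}{n(m-1)+2}$.

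\emph{Step 1: regularise.} Replace $u_0$ by a smooth datum $u_{0,\eps}\ge \eps>0$ that converges to $u_0$ in $L^1$. The corresponding solutions $u_\eps$ are classical and positive, since the equation is uniformly parabolic where $u_\eps\ge\eps$. They converge locally uniformly to $u$, and $v_\eps=\frac{m}{m-1}u_\eps^{m-1}$ is smooth and satisfies \eqref{pme v} on all of $\R^n\times(0,\infty)$.

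\emph{Step 2: derive the equation for $p:=\Delta v_\eps$.} Apply $\Delta$ to $v_t=(m-1)v\Delta v+|\nabla v|^2$ and use $\Delta|\nabla v|^2=2|D^2v|^2+2\nabla v\cdot\nabla\Delta v$. This gives
\begin{equation*}
p_t=(m-1)v\Delta p+2m\,\nabla v\cdot\nabla p+(m-1)p^2+2|D^2v|^2 .
\end{equation*}
Since $|D^2v|^2\ge \frac{1}{n}(\Delta v)^2$, we get the differential inequality
\begin{equation*}
p_t\ge (m-1)v\Delta p+2m\,\nabla v\cdot\nabla p+\kappa p^2 .
\end{equation*}

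\emph{Step 3: compare with an ODE subsolution.} The function $\phi(t)=-\frac{1}{\kappa t}$ solves $\phi'=\kappa\phi^2$ and tends to $-\infty$ as $t\to0^+$. The difference $w=p-\phi$ satisfies a linear parabolic inequality $w_t\ge (m-1)v\Delta w+2m\nabla v\cdot\nabla w+\kappa(p+\phi)w$. Because $v_\eps\ge c(\eps)>0$, this operator is uniformly parabolic. The minimum principle then yields $w\ge0$, provided $w$ is nonnegative at $t=0^+$ and at spatial infinity. The first holds because $\phi\to-\infty$ while $p$ is bounded for small $t$. The second holds because $v_\eps$ is constant, equal to $\frac{m}{m-1}\eps^{m-1}$ outside a compact set initially, and $p\to0$ at infinity. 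To make the spatial-infinity argument rigorous I would work on large balls with a small penalty term, or use periodic or bounded approximating data.

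I expect this infinity/initial-time bookkeeping to be the main technical obstacle, although it is standard. It yields $\Delta v_\eps\ge -\frac{n}{(n(m-1)+2)t}$. Letting $\eps\to0$, and using $v_\eps\to v$ locally uniformly so that $\Delta v_\eps\to\Delta v$ in distributions, gives the first inequality.

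\emph{Step 4: time monotonicity.} From \eqref{pme v} and the first inequality, at the level of the approximations,
\begin{equation*}
\partial_t v_\eps\ge (m-1)v_\eps\Delta v_\eps\ge -\frac{n(m-1)}{(n(m-1)+2)t}\,v_\eps .
\end{equation*}
This is equivalent to saying that $t^{\frac{n(m-1)}{n(m-1)+2}}v_\eps(x,t)$ is nondecreasing in $t$ for each fixed $x$. Evaluating at times $s\le t$ and letting $\eps\to0$ by pointwise convergence gives $v(x,t)\ge (s/t)^{\frac{n(m-1)}{n(m-1)+2}}v(x,s)$.
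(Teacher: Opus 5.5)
Your proposal is correct and is the classical Aronson--B\'enilan argument. The paper does not reprove this statement; it cites \cite{AB79}, but it runs the same strategy in its proof of the local version, Proposition~\ref{local AB prop}: smooth positive approximation, the equation $p_t=(m-1)v\Delta p+2m\nabla v\cdot\nabla p+(m-1)p^2+2|D^2v|^2$ for $p=\Delta v$, a minimum principle against a barrier that blows up at the initial time, and then $v_t\ge (m-1)v\Delta v$ integrated in time.

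The differences are these. You keep the gain $2|D^2v|^2\ge \frac{2}{n}p^2$, which gives the sharp constant, and you handle spatial infinity for whole-space approximations; a minimum principle for bounded $w$ with bounded coefficients, or periodic data, suffices there. The paper's local version instead uses the cut-off $\eta$ and the correction $-Av$, drops that gain, and pays an extra $M/R_0^2$ term. One small point: data bounded below by $\eps$ cannot converge to $u_0$ in $L^1(\R^n)$, only in $L^1_{\mathrm{loc}}$.
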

Proposition \ref{AB prop} holds on $\R^n$ for Cauchy problem. We can give a local version that is useful for our local question.
\begin{proposition}
	\label{local AB prop}
	For equation \eqref{pme v} on $B_{2R_0}\times(0,T_0]$, suppose $v\in[0,M]$ for some constant $M>0$, then there is constant $K_0=K_0(n,m)$ such that in the sense of distributions
	\begin{align*}
		\Delta v\geq -K_0\left(\frac{1}{t}+\frac{M}{R_0^2}\right)\quad\text{on $B_{R_0}\times(0,T_0]$}.
	\end{align*}
	Moreover, fixed $\delta>0$, there is constant $\dis K_{\delta}$, depending only on $n$, $m$, $\delta$, $R_0$, $M$, such that 
	\begin{align*}
		\Delta v\geq -K_{\delta} \quad \text{on $B_{R_0}\times[\delta,T_0]$}\quad\text{in the sense of distributions},
	\end{align*}
	and that for any $x\in B_{R_0}$ and $\delta\leq s\leq t\leq T_0$,
	\begin{align*}
	\qquad v(x,t)\geq e^{-(m-1)K_{\delta}(t-s)} v(x,s).
	\end{align*}
\end{proposition}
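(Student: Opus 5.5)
We prove the local Aronson–Bénilan bound (Proposition~\ref{local AB prop}) by the classical maximum-principle argument applied to $w:=\Delta v$, localized with a cutoff. By standard approximation (positive smooth solutions obtained by raising the data by $\varepsilon>0$, then letting $\varepsilon\to0$, with the inequality passing to the limit in the distributional sense) we may assume $v$ is smooth and positive on $\overline{B_{2R_0}}\times[0,T_0]$, with $0<v\le M$.

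\emph{Evolution of $w$.} Differentiating \eqref{pme v} twice and summing gives
\[
w_t=(m-1)v\Delta w+2m\nabla v\cdot\nabla w+(m-1)w^2+2|D^2v|^2 ,
\]
and $|D^2v|^2\ge w^2/n$. Hence
\[
\mathcal L w:=w_t-(m-1)v\Delta w-2m\nabla v\cdot\nabla w\ \ge\ \Big(m-1+\tfrac2n\Big)w^2 .
\]
The spatially constant function $-\big(\kappa t\big)^{-1}$ with $\kappa=m-1+2/n$ is a subsolution, and comparison on $\R^n$ gives Proposition~\ref{AB prop}. Locally there is no global comparison, so we introduce a cutoff.

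\emph{Cutoff argument.} Take $\eta\in C_c^\infty(B_{2R_0})$ with $\eta=1$ on $B_{R_0}$, $0\le\eta\le1$, and $|\nabla\eta|\le C\eta^{1/2}/R_0$, $|D^2\eta|\le C/R_0^2$. Set $Z:=t\,\eta\, w$ and consider a negative minimum of $Z$ at a point $(x_0,t_0)$ with $t_0>0$; note $Z=0$ at $t=0$ and on $\partial B_{2R_0}$. At that point $\nabla Z=0$, $\Delta Z\ge0$, $Z_t\le0$. Substituting into the equation for $w$ and eliminating $\nabla w=-w\nabla\eta/\eta$ produces error terms of three types:
\begin{itemize}
\item[(a)] $(m-1)v\,w\,(\Delta\eta-2|\nabla\eta|^2/\eta)$, bounded by $C\,M|w|/R_0^2$;
\item[(b)] $2m\,w\,\nabla v\cdot\nabla\eta$;
\item[(c)] the term $\eta w$ coming from differentiating $t$.
\end{itemize}
Multiplying through by $t^2\eta$, we obtain at the minimum
\[
\kappa Z^2\le |Z|+C\frac{Mt}{R_0^2}|Z|+(\text{gradient term}).
\]

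\emph{The gradient term (main obstacle).} Term (b) contains $|\nabla v|$, which is not controlled a priori. The first approach is to use the local gradient bound $|\nabla v|^2\le C M(1/t+M/R_0^2)$, available from Caffarelli–Friedman-type Lipschitz/Bernstein estimates applied to $|\nabla v|^2/v$ on $B_{3R_0/2}$. If this is unavailable, we instead work with $\eta^2$ and absorb (b) by Young's inequality against a Bernstein quantity, for instance by adding $\varepsilon|\nabla v|^2$ to $w$, since the equation for $|\nabla v|^2$ contributes the good term $2(m-1)v|D^2v|^2$. In either case, (b) is bounded by $C(\sqrt{M}/R_0)\,t^{1/2}|Z|\,(1+Mt/R_0^2)^{1/2}\le C|Z|(1+Mt/R_0^2)$.

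\emph{Conclusion.} With (b) controlled, $\kappa|Z|\le C(1+Mt/R_0^2)$ at the minimum, so
\[
\Delta v\ge -K_0\Big(\frac1t+\frac{M}{R_0^2}\Big)\quad\text{on } B_{R_0}\times(0,T_0].
\]
For $t\ge\delta$ this gives $\Delta v\ge -K_\delta$ with $K_\delta=K_0\big(1/\delta+M/R_0^2\big)$. Finally, since $v_t=(m-1)v\Delta v+|\nabla v|^2\ge-(m-1)K_\delta\,v$, we get $(\log v)_t\ge-(m-1)K_\delta$ wherever $v>0$. Integrating from $s$ to $t$ yields $v(x,t)\ge e^{-(m-1)K_\delta(t-s)}v(x,s)$; the inequality is trivial where $v(x,s)=0$, and positivity persists in time.
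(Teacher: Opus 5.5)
There is a genuine gap, and it sits exactly at the step you flag as the main obstacle: controlling the cross term (b), $2m\,w\,\nabla v\cdot\nabla\eta$.

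Your first route assumes a local bound $|\nabla v|^2\le CM(1/t+M/R_0^2)$ with $C=C(n,m)$, but no such bound holds for $n\ge2$. The Aronson--Graveleau focusing solution, which the paper cites to show that H\"older regularity is optimal, is bounded near the focusing point. Yet at the positive focusing time $T$ it behaves like $v(x,T)\sim c|x|^{\beta}$ with $0<\beta<1$, so $|\nabla v|$ is unbounded there. Nor can $|\nabla v|^2/v$ obey any local Bernstein bound: it blows up at every free boundary point that moves with nonzero speed, already for the Barenblatt pressure.

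Your second route does not repair this. One computes
\[
\mathcal L\big(|\nabla v|^2\big)=-2(m-1)v|D^2v|^2+2(m-1)|\nabla v|^2\Delta v-4(m-1)D^2v(\nabla v,\nabla v).
\]
So adding $\varepsilon|\nabla v|^2$ to $w$ brings in a Hessian term of the wrong sign for a minimum argument, plus new terms containing the uncontrolled $|\nabla v|$. More to the point, absorbing (b) by Young's inequality requires a \emph{positive multiple of $|\nabla v|^2$} in $\mathcal L$ of your test quantity, and $v|D^2v|^2$ is not that. The bound you then state for (b) is just the false gradient bound again, so $\kappa|Z|\le C(1+Mt/R_0^2)$ is not established.

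The missing idea is to subtract a multiple of $v$ rather than add a gradient quantity. Since
\[
\mathcal Lv=v_t-(m-1)v\Delta v-2m|\nabla v|^2=-(2m-1)|\nabla v|^2,
\]
the quantity $W=\eta\,\Delta v-Av$ with $A=a/R_0^2$ picks up the good term $(2m-1)A|\nabla v|^2$ in $\mathcal LW$.

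At an interior minimum, $\eta\nabla\Delta v=A\nabla v-\Delta v\,\nabla\eta$. After multiplying by $\eta$, all terms linear in $\nabla v$ combine into $-2\big(m\eta\Delta v+(m-1)Av\big)\nabla v\cdot\nabla\eta$. Young's inequality against $(2m-1)A\eta|\nabla v|^2$, together with $|\nabla\eta|^2\le C\eta/R_0^2$, bounds this below by $-\frac{C}{a}|\eta\Delta v|^2-\frac{CaM^2}{R_0^4}$. Once $a$ is large, the first piece is absorbed into $(m-1)|\eta\Delta v|^2$.

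This is how the paper proceeds, and no gradient bound on $v$ is ever used. The paper compares with the barrier $-32/((m-1)(t-\tau))-bM/R_0^2$ instead of your weight $t$, but $tW$ works equally well. With this replacement, the rest of your argument goes through, including $v_t\ge-(m-1)K_\delta v$ and the resulting bound $v(x,t)\ge e^{-(m-1)K_\delta(t-s)}v(x,s)$.
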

\begin{proof}
	First, approximate $v$ with a sequence of positive and smooth function still denoted by $v$. Write $p=\Delta v$, then differentiation of \eqref{pme v} gives
	\begin{align*}
		p_t=(m-1)v\Delta p+2m\nabla v\cdot \nabla p+(m-1)p^2+2\left|D^2v\right|^2.
	\end{align*}
	Denote operator $\mathcal{L}=\partial_t-(m-1)v\Delta-2m\nabla v\cdot \nabla$, then
	\begin{align*}
		\mathcal{L}p=(m-1)p^2+2\left|D^2 v\right|^2,\qquad \mathcal L v=(2m-1)\left|\nabla v\right|^2.
	\end{align*}
	
	Choose a cut-off function $\eta\in C_c^{\infty}(B_{2R_0})$ with $\eta\in[0,1]$ and 
	\begin{align*}
		\eta\equiv 1\quad\text{on $B_{R_0}$},\qquad \left|\nabla \eta\right|^2\leq C_{\eta} \eta R_0^{-2},\qquad \left|\Delta \eta\right|\leq C_{\eta} R_0^{-2}.
	\end{align*}
	Define
	\begin{align*}
		A=\frac{a}{R_0^2}\qquad W=\eta p-Av,
	\end{align*}
	where $a>0$ is a constant to be chosen later. Then computations give
	\begin{align*}
		\label{LW}
		\tag{2.1}
		\begin{split}
		\mathcal LW=&\eta\left((m-1)p^2+2\left|D^2 v\right|^2\right)-(m-1)v p\Delta \eta-2mp\nabla v\cdot\nabla \eta\\
		&-2(m-1)v\nabla \eta \cdot \nabla p+ A(2m-1)\left|\nabla v\right|^2.
		\end{split}
	\end{align*}
	
	At the interior minimum point of $W$ with $W<-2AM$ we have
	\begin{align*}
		\nabla p=\frac{A\nabla v- p\nabla \eta}{\eta},\qquad \frac{|W|}{2}\leq|\eta p|\leq |W|.
	\end{align*}
	Substitute into the identity of $\mathcal LW$ and multiply by $\eta$
	\begin{align*}
		\eta \mathcal LW=&(m-1)|\eta p|^2+2\eta^2\left|D^2 v\right|^2-(m-1)v\eta p\Delta \eta+2(m-1)v  p \left|\nabla \eta\right|^2\\
		&+(2m-1)A\eta\left|\nabla v\right|^2-2\left(m\eta p+(m-1)Av\right)\nabla v\cdot \nabla\eta
	\end{align*}
	Since
	\begin{align*}
		&(2m-1)A\eta\left|\nabla v\right|^2-2\left(m\eta p+(m-1)Av\right)\nabla v\cdot \nabla\eta\\
		\geq& -\frac{\left(m\eta p +(m-1)Av\right)^2}{(2m-1)A\eta}|\nabla \eta|^2\\
		\geq&-\frac{2m^2C_{\eta}}{(2m-1)a}|\eta p|^2-\frac{2(m-1)^2C_{\eta} M^2a}{(2m-1)R_0^4},
	\end{align*}
	choose $\dis a\geq \frac{4m^2 C_{\eta}}{(2m-1)(m-1)}$ and $a \geq 1$, then
	\begin{align*}
		(2m-1)A\eta\left|\nabla v\right|^2-2\left(m\eta p+(m-1)Av\right)\nabla v\cdot \nabla\eta\geq -\frac{(m-1)}{2}|\eta p|^2- \frac{2(m-1)^2C_{\eta}a}{(2m-1)}\frac{M^2}{R_0^{4}}.
	\end{align*}
	Consequently,
	\begin{align*}
		\eta \mathcal LW\geq \frac{(m-1)}{2}|\eta p|^2-3(m-1)M C_{\eta}R_0^{-2}|\eta p|- \frac{2(m-1)^2C_{\eta}a}{(2m-1)}\frac{M^2}{R_0^{4}}.
	\end{align*}
	
	Choose a constant $b>0$ satisfying
	\begin{align*}
		b>2a,\qquad b>48 C_{\eta},\qquad b>\sqrt{32\frac{2(m-1)C_{\eta}a}{(2m-1)}}.
	\end{align*}
	If $W\leq -bM/R_0^2$, then we have $|\eta p|\geq|M|/2\geq bM/(2R^2_0)$. Substitute into the previous inequality
	\begin{align*}
		\eta \mathcal LW\geq \frac{(m-1)}{4}|\eta p|^2\geq \frac{(m-1)}{16}W^2.
	\end{align*}
	
	Fix $\tau>0$ and define, for $t>\tau$,
	\begin{align*}
		w_{\tau}(t)=-\frac{32}{(m-1)(t-\tau)}-\frac{bM}{R_0^2}.
	\end{align*}
	Then we use the minimum argument to show $W\geq w_{\tau}$ on $B_{2R_0}\times(\tau,T_0]$. For the initial boundary, use the smoothness of $v$ and $w_{\tau}(t)\to -\infty$ as $t\to\tau$, it is easy to get $W\geq w_{\tau}$. For the lateral boundary, we have $W=-Av\geq-bM/R_0^2\geq w_{\tau}$. It remains to show the minimum of $W-w_{\tau}$ can not be in the interior. In the interior negative minimum point, we have $W<w_{\tau}\leq -bM/R_0^2$, then previous computations give that
	\begin{align*}
		\mathcal LW\geq \eta \mathcal LW\geq \frac{(m-1)}{16}W^2>\frac{(m-1)}{16} w_{\tau}^2\geq \frac{64}{(m-1)(t-\tau)^2},
	\end{align*}
	while $\nabla(W-w_{\tau})=0$, $\Delta(W-w_{\tau})\geq 0$ and $(W-w_{\tau})_t\leq0$ give that
	\begin{align*}
		\mathcal LW\leq (w_{\tau})_t=\frac{32}{(m-1)(t-\tau)^2}.
	\end{align*}
	Thus we get a contradiction. 
	
	Therefore, we conclude $W\geq w_{\tau}$ on $B_{2R_0}\times(\tau,T_0]$. Let $\tau\downarrow0$ and restrict domain on $B_{R_0}$, then
	\begin{align*}
		\Delta v=p\geq -\frac{32}{(m-1)t}-\frac{bM}{R_0^2}\quad\text{on $B_{R_0}\times(0,T_0]$}.
	\end{align*}
	Taking the limit in the integral inequality we obtain the distribution inequality and finish our proof.
 \end{proof}

Then we state two results in \cite{CF80}. The first shows that  the free boundary can not enter a fixed area without enough density. The second shows that once the free boundary start to move, it will not stop.
\begin{proposition}[\cite{CF80}]
	\label{CF prop}
	Suppose $\Delta v\geq -K$ in the sense of distributions on $B_{R_0}\times\left(0,\infty\right)$. There exists constants $\sigma_0,\ c_0>0$, depending only on $m$, $n$, $K$, such that for any $x_0\in B_{R_0}$, $t_0\in\left(0,\infty\right)$, $R\in(0,\operatorname{dist}(x_0, B_{R_0}))$, $0<\sigma<\sigma_0$, if
	\begin{align*}
		v(x,t_0)\equiv0\quad\text{on $B_R(x_0)$},\qquad \frac{1}{|B_R(x_0)|}\int_{B_R(x_0)}v(x,t_0+\sigma)dx\leq \frac{c_0 R^2}{\sigma},
	\end{align*}
	then
	\begin{align*}
		v(x,t_0+\sigma)\equiv0\quad\text{on $B_{R/6}(x_0)$}.
	\end{align*}
\end{proposition}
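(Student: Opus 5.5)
We would argue by comparison with an explicit shrinking-hole barrier. The plan has three steps: first convert the integral smallness of $v(\cdot,t_0+\sigma)$ into a pointwise bound on $v$ in a smaller cylinder, then build a supersolution that vanishes on $B_{R/6}(x_0)$ at time $t_0+\sigma$, and finally compare.

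\emph{Step 1: an $L^\infty$ bound on $B_{R/2}(x_0)\times[t_0,t_0+\sigma]$.*}

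From \eqref{pme v} and $\Delta v\ge -K$ we get, in the distributional/viscosity sense,
\[
v_t=(m-1)v\Delta v+|\nabla v|^2\ge -(m-1)Kv .
\]
Hence $v(x,s)\le e^{(m-1)K\sigma_0}v(x,t_0+\sigma)$ for every $s\in[t_0,t_0+\sigma]$, in the same way as the last inequality in Proposition~\ref{local AB prop}. It follows that the averages of $v(\cdot,s)$ over $B_R(x_0)$ are bounded by $C c_0R^2/\sigma$ for all such $s$.

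Next, for fixed $s$ the function $v(\cdot,s)+\frac{K}{2n}|x-y|^2$ is subharmonic for each $y$. The mean value inequality on $B_{R/2}(y)\subset B_R(x_0)$, with $y\in B_{R/2}(x_0)$, then gives
\[
\sup_{B_{R/2}(x_0)\times[t_0,t_0+\sigma]} v\;\le\; C_1\Big(\frac{c_0R^2}{\sigma}+KR^2\Big),
\]
where $C_1=C_1(n,m,K)$.

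\emph{Step 2: the barrier.*}

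Let $\rho=|x-x_0|$ and $r(t)=\frac R3-\frac{R}{6\sigma}(t-t_0)$, so $r$ decreases from $R/3$ at $t=t_0$ to $R/6$ at $t=t_0+\sigma$. Set
\[
w(x,t)=\lambda\,(\rho-r(t))_+ ,\qquad \lambda=\frac{R}{N\sigma},
\]
with $N$ large. On $\{w>0\}$, where $\rho\ge R/6$, we have $|\nabla w|=\lambda$, $\Delta w=\lambda(n-1)/\rho$ and $w\le\lambda\rho$. Therefore
\[
(m-1)w\Delta w+|\nabla w|^2\le \big(1+(m-1)(n-1)\big)\lambda^2 .
\]
On the other hand $w_t=\lambda\frac{R}{6\sigma}=\frac{N}{6}\lambda^2$. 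Choosing $N\ge 6(1+(m-1)(n-1))$ makes $w$ a supersolution in its positivity set. Across the corner $\rho=r$ the Lipschitz profile has the correct orientation: the front moves inward with speed $R/(6\sigma)\ge\lambda$, which is at least the Darcy speed $|\nabla w|$. So $w$ is a weak supersolution. It is cleanest to phrase this for the density $\big(\frac{m-1}{m}w\big)^{1/(m-1)}$ as a weak supersolution of \eqref{pme u}.

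\emph{Step 3: comparison.*}

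We compare $v$ and $w$ on $B_{R/2}(x_0)\times[t_0,t_0+\sigma]$.

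At $t=t_0$ we have $v\equiv0\le w$ on $B_R(x_0)$. On the lateral boundary $\rho=R/2$ we have $w\ge\lambda R/6=R^2/(6N\sigma)$. By Step 1 this dominates $\sup v$ provided
\[
C_1c_0\le\frac1{12N}\qquad\text{and}\qquad C_1K\sigma_0\le \frac1{12N},
\]
which fixes $c_0$ and $\sigma_0$ in terms of $n$, $m$ and $K$.

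The comparison principle for weak solutions of \eqref{pme u} on bounded cylinders then gives $v\le w$. In particular $v(\cdot,t_0+\sigma)=0$ on $\{\rho<r(t_0+\sigma)\}=B_{R/6}(x_0)$.

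\emph{Main obstacle.*} The delicate point is the rigorous supersolution and comparison justification for the nonsmooth barrier at the degenerate interface, together with the time-monotonicity step, since $v$ is only a weak solution. We would handle both by the usual approximation of $v$ by positive smooth solutions, as in the proof of Proposition~\ref{local AB prop}. We would also smooth the corner of $w$ (for instance replacing $(\rho-r)_+$ by a slightly convex profile), so that the inequalities hold classically and pass to the limit.
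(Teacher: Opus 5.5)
The paper does not prove this proposition; it is quoted from \cite{CF80}. Your argument is correct and is the standard proof of this lemma. The two ingredients are as follows. First, subharmonicity of $v(\cdot,s)+\frac{K}{2n}|x-y|^2$ combined with $v_t\ge -(m-1)Kv$ gives $v\le C_1\left(c_0R^2/\sigma+KR^2\right)$ on $B_{R/2}(x_0)\times[t_0,t_0+\sigma]$. Second, the shrinking-hole profile $\lambda(\rho-r(t))_+$ with $N\ge 6(1+(m-1)(n-1))$ is a supersolution whose front moves at speed $N\lambda/6\ge\lambda=|\nabla w|$. All the constants check out, and they depend only on $n,m,K$.

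Some technical remarks follow; none of them is a gap. First, $C_1$ contains the factor $e^{(m-1)K\sigma_0}$, so fix $\sigma_0\le 1$ before shrinking it further.

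Second, you do not need to approximate $v$ by smooth positive solutions, which would not obviously inherit the abstract hypothesis $\Delta v\ge -K$. The function $v$ is smooth in $\{v>0\}$, so $\frac{d}{dt}\log v\ge -(m-1)K$ along vertical segments there. This gives both persistence of positivity and the time monotonicity. The space–time distributional inequality passes to each time slice by continuity of $v$, which is what the mean value step uses.

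Third, no corner smoothing is needed for the barrier. For $\tilde w=\left(\frac{m-1}{m}w\right)^{1/(m-1)}$, both $\tilde w$ and $\nabla \tilde w^m=\tilde w\nabla w$ vanish continuously on $\{\rho=r(t)\}$. Also $\tilde w_t,\Delta\tilde w^m\in L^1_{loc}$. Hence integrating by parts over $\{\rho>r(t)\}$ produces no boundary terms. The front condition $w_t\ge|\nabla w|^2$ is already contained in your pointwise inequality on $\{w>0\}$.
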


\begin{theorem}[\cite{CF80}]
	\label{CF thm}
	For any boundary point $(x_0,t_0)\in\Gamma$, either every point of $\left\{x=x_0,0\leq t<t_0\right\}$ belongs to $\Gamma$ or no point of $\left\{x=x_0,0\leq t<t_0\right\}$ belongs to $\Gamma$.
\end{theorem}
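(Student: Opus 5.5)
The plan is to combine the monotonicity in time of the positivity set, from Proposition~\ref{AB prop}, with the local non-advance result Proposition~\ref{CF prop}, together with the global H\"older continuity of $v$ from \cite{CF80}.

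\emph{Step 1: the segment is a zero set.} By the second inequality in Proposition~\ref{AB prop}, $v(x,s)>0$ implies $v(x,t)>0$ for all $t\geq s$, so $\Omega(t)$ is nondecreasing in $t$. Since $\Omega$ is open and $v$ is continuous, $v(x_0,t_0)=0$. Monotonicity then gives $v(x_0,t)=0$ for all $0\le t\le t_0$. Hence each point $(x_0,t)$ with $t<t_0$ is either in $\Gamma$ or an interior point of $\{v=0\}$.

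\emph{Step 2: $\Gamma$ propagates upward along the segment.} Suppose $(x_0,t_1)\in\Gamma$ with $t_1<t_0$, and let $s\in(t_1,t_0]$. There are points $(y_k,\tau_k)\in\Omega$ converging to $(x_0,t_1)$. By monotonicity, $(y_k,\tau_k+s-t_1)\in\Omega$, and these points converge to $(x_0,s)$. Therefore $(x_0,s)\in\Gamma$. So $I:=\{t\in[0,t_0):(x_0,t)\in\Gamma\}$ is an interval of the form $[t_*,t_0)$ or $(t_*,t_0)$, or it is empty. Since $\Gamma$ is closed, $I$ is relatively closed in $[0,t_0)$. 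It remains to show that $I\neq\emptyset$ forces $t_*=0$.

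\emph{Step 3: exclude $t_*>0$.} Suppose $0<t_*<t_0$. Then for $t_1<t_*$ close to $t_*$, there is some $R=R(t_1)>0$ with $v(\cdot,t_1)\equiv 0$ on $B_R(x_0)$. I want to apply Proposition~\ref{CF prop} at time $t_1$ with a step $\sigma$ satisfying $t_*-t_1<\sigma<\sigma_0$ and $t_1+\sigma\le t_0$. The hypothesis $\Delta v\ge -K$ is available on $[\delta,\infty)$ with $\delta=t_*/2$ from Proposition~\ref{AB prop}, after shifting time.

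The mean-value hypothesis is supplied by a uniform bound. For $s\in[\delta,t_0]$, Proposition~\ref{AB prop} gives
\[
v(x,s)\le (t_0/s)^{k}\,v(x,t_0),\qquad k=\frac{n(m-1)}{n(m-1)+2}.
\]
The H\"older continuity of $v$ from \cite{CF80}, together with $v(x_0,t_0)=0$, gives $v(x,t_0)\le C|x-x_0|^{\alpha_0}$. Hence
\[
v(x,s)\le C'R^{\alpha_0}\quad\text{on } B_R(x_0),\ s\in[\delta,t_0],
\]
with $C'$ independent of $t_1$ and $R$. The hypothesis of Proposition~\ref{CF prop} therefore holds as soon as $C'R^{\alpha_0}\le c_0R^2/\sigma$, that is, $\sigma\le c_0R^{2-\alpha_0}/C'$. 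The conclusion would then be $v(\cdot,t_1+\sigma)\equiv0$ on $B_{R/6}(x_0)$. Iterating with shrinking radii would give a neighbourhood of $x_0$ on which $v\equiv0$ up to time $t_1+\sigma>t_*$.

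\emph{Main obstacle.} The difficulty is the coupling between $R(t_1)$ and the gap $t_*-t_1$. One needs $\sigma\sim R^{2-\alpha_0}$ to exceed $t_*-t_1$, but $R(t_1)$ may shrink as $t_1\uparrow t_*$. I would first try to decouple the two as follows. Fix $t_1$ and its $R$. Then use the iteration, in which each step at radius $R_j=R/6^{j}$ advances time by $\sigma_j\simeq c R_j^{2-\alpha_0}$. This gives only a finite total advance $\sum_j\sigma_j\simeq cR^{2-\alpha_0}$, which is too small. The real fix must therefore avoid the radius loss: apply Proposition~\ref{CF prop} at each intermediate time with the \emph{same} radius, choosing the new $R$ from the vanishing on $B_R$ rather than from the conclusion ball. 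Equivalently, one should show that the set of times at which $v\equiv0$ on a fixed ball $B_{R/6}(x_0)$ is open, so that by connectedness it is all of $[t_1,t_0)$. This would contradict $(x_0,t_*)\in\Gamma$.

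If this fails, my fallback is to use the full Caffarelli--Friedman structure: the H\"older free boundary estimate together with the local $\Delta v\ge -K$ bound.
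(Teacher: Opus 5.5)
The paper gives no proof of this statement; it is quoted from \cite{CF80}. So I judge your argument on its own terms.

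\textbf{Steps 1 and 2.} These are correct, and they reduce the theorem to excluding $0<t_*<t_0$.

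\textbf{Step 3 has a genuine gap.} The repair you suggest does not close it. The set of times at which $v(\cdot,t)\equiv0$ on a fixed ball $B_\rho(x_0)$ is closed by continuity, and Proposition~\ref{CF prop} cannot make it open. Started from vanishing on $B_\rho$, it only returns vanishing on $B_{\rho/6}$. Restarting with the same radius would need vanishing on the larger ball at the intermediate time, and that is exactly what fails when the positivity set approaches $x_0$ as $t\uparrow t_*$. So the ``openness'' claim is a restatement of what you want to prove, and the fallback is not an argument.

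\textbf{The real defect is the scaling of your bound.} The bound $v\le C'R^{\alpha_0}$ on $B_R(x_0)$ is too weak. Proposition~\ref{CF prop} is calibrated to $v\sim R^2/\sigma$, so a bound of order $R^{\alpha_0}$ with $\alpha_0<2$ only allows $\sigma\lesssim R^{2-\alpha_0}$. If the positivity set approaches $x_0$ with bounded speed, then $R(t_1)\le C(t_*-t_1)$. Hence $R(t_1)^{2-\alpha_0}\ll t_*-t_1$, and no admissible $\sigma$ exceeds $t_*-t_1$. Transferring from $t_0$ to earlier times via Proposition~\ref{AB prop} at the \emph{same} point $x$ discards the one fact that matters: $v$ still vanishes at $x_0$ at the strictly later time $t_0$.

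\textbf{The missing ingredient.} What you need is the quadratic bound forced by a later zero. If $v(x_0,t_0)=0$, then for $0<t<t_0$
\[
v(x,t)\le \left(\frac{t_0}{t}\right)^{k}\frac{|x-x_0|^2}{4(t_0-t)},\qquad k=\frac{n(m-1)}{n(m-1)+2}.
\]
This follows from Proposition~\ref{AB prop} and the equation:
\begin{enumerate}
\item[(a)] They give $v_t\ge|\nabla v|^2-\frac{k}{t}v$.
\item[(b)] Hence $w=t^kv$ satisfies $w_t\ge t_0^{-k}|\nabla w|^2$ for $t\le t_0$.
\item[(c)] Along the segment $s\mapsto\left(x+\frac{s-t}{t_0-t}(x_0-x),s\right)$ one gets $\frac{d}{ds}w\ge-\frac{t_0^k|x-x_0|^2}{4(t_0-t)^2}$.
\item[(d)] Integrate over $s\in[t,t_0]$, first for smooth positive approximations, then pass to the limit.
\end{enumerate}

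\textbf{How this closes Step 3.} With this bound, no iteration is needed.
\begin{enumerate}
\item[(a)] Take $t_1\ge t_*/2$ and $\sigma\le(t_0-t_*)/2$. Then the average of $v(\cdot,t_1+\sigma)$ over $B_R(x_0)$ is at most $CR^2/(t_0-t_*)$, with $C=\frac12(2t_0/t_*)^k$ independent of $R$ and $t_1$.
\item[(b)] So the hypothesis of Proposition~\ref{CF prop} holds for the fixed step $\sigma=\min\{(t_0-t_*)/2,\,c_0(t_0-t_*)/C,\,\sigma_0/2\}$.
\item[(c)] Pick $t_1\in(\max\{t_*-\sigma,t_*/2\},t_*)$ and $R>0$ with $v(\cdot,t_1)\equiv0$ on $B_R(x_0)$.
\item[(d)] Proposition~\ref{CF prop} gives $v(\cdot,t_1+\sigma)\equiv0$ on $B_{R/6}(x_0)$. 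By monotonicity, $v\equiv0$ on $B_{R/6}(x_0)\times[0,t_1+\sigma]$.
\item[(e)] Since $t_1<t_*<t_1+\sigma$, this set is a neighbourhood of $(x_0,t_*)$, which contradicts $(x_0,t_*)\in\Gamma$.
\end{enumerate}
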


The short-time smoothness in \cite{DH98} is stated as following.
\begin{theorem}[\cite{DH98}]
	\label{DH thm}
	Let $v_0\in C^{2,\alpha}\left(\overline{\Omega(0)}\right)$ satisfy the condition \eqref{ndc} for some $\alpha>0$. Then there exists a constant $\varepsilon_0>0$, depending only on $n$, $m$, $v_0$, $c$ such that $v(x,t)\in C^{\infty}\left(\overline{\Omega}\right)$ and $\Gamma$ is $C^{\infty}$ smooth for $t\in\left(0,\varepsilon_0\right)$. 
\end{theorem}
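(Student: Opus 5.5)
The plan is to follow the strategy of Daskalopoulos--Hamilton: turn the degenerate free boundary problem \eqref{pme v} into a degenerate equation on a \emph{fixed} domain, solve that equation for short time in suitably weighted H\"older spaces, and then transfer regularity back.

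\textbf{Step 1: coordinate change.} By \eqref{ndc}, on $\Gamma(0)$ we have $v_0=0$ and hence $|\nabla v_0|\ge c>0$. So near each point of $\Gamma(0)$ we may rotate coordinates so that $\partial_{x_n}v_0\ge c/2$. By the implicit function theorem we then write the level sets $\{v_0=z\}$, for small $z\ge 0$, as graphs $x_n=h_0(x',z)$, with $h_0\in C^{2,\alpha}$ and $h_{0,z}>0$. For the evolving solution we look for $h(x',z,t)$ with $v(x',h(x',z,t),t)=z$. Implicit differentiation of \eqref{pme v} gives $v_{x_n}=1/h_z$, $v_t=-h_t/h_z$, together with expressions for $\nabla' v$ and $D^2v$ in terms of $h$. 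This turns \eqref{pme v} into a fully nonlinear equation of the form
\[
h_t=(m-1)z\,\mathcal{A}(\nabla h)\!:\!D^2h+\mathcal{B}(\nabla h),
\]
on the fixed half-space $\{z\ge0\}$. Its principal part in the normal direction is $z\,h_{zz}/h_z^2$, and there is a first-order term $\nu h_z$-type with $\nu>0$ coming from $|\nabla v|^2$. The key structural feature is that the boundary condition is built in: no boundary data is needed at $z=0$, because the Stefan condition $v_t=|\nabla v|^2$ is automatically encoded in the degenerate equation. Away from $\Gamma$ the equation is uniformly parabolic, so classical theory applies there. A partition of unity patches the local pictures together.

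\textbf{Step 2: linear theory.} Linearizing at $h_0$ gives, after freezing coefficients, a model operator
\[
\partial_t-\big(z\partial_{zz}+\nu\partial_z+z\Delta_{x'}\big),
\]
modulo lower-order terms. I would introduce the Hölder spaces $C^{\alpha}_s$ and $C^{2+\alpha}_s$ defined with respect to the cycloidal (Riemannian) distance
\[
ds^2=\frac{dz^2}{z}+\frac{|dx'|^2}{z}+dt^2,
\]
which is the natural metric for this degeneracy; in $C^{2+\alpha}_s$ the quantities $zD^2h$ are Hölder. Then I would prove Schauder estimates
\[
\|h\|_{C^{2+\alpha}_s}\le C\big(\|f\|_{C^\alpha_s}+\|h(\cdot,0)\|_{C^{2+\alpha}_s}\big)
\]
for the model operator, and prove solvability. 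The model problem decouples after Fourier transform in $x'$ and Laplace transform in $t$, leaving a Bessel-type ODE in $z$ with an explicit kernel. From the kernel bounds one derives the estimates, and perturbation plus freezing of coefficients gives them for variable coefficients. I expect this step to be the main obstacle: obtaining kernel estimates that are sharp in the cycloidal metric uniformly up to $z=0$.

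\textbf{Step 3: nonlinear problem and smoothness.} On a small time interval $[0,\varepsilon_0]$ the nonlinear operator is a small perturbation of its linearization at $h_0$. Here $\varepsilon_0$ depends on $\|v_0\|_{C^{2,\alpha}}$, $c$, $n$ and $m$, which controls the modulus of $h_{0,z}$ from below. A contraction-mapping (or inverse function) argument in $C^{2+\alpha}_s$ then yields a unique solution $h$. Undoing the transformation gives a solution $v$, which coincides with the weak solution by uniqueness. For smoothness, I would differentiate the equation in $x'$ and $t$; these derivatives satisfy linear equations of the same type, so the Schauder estimates bootstrap to give all tangential and time derivatives. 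Derivatives in $z$ are then recovered from the equation itself, since $z h_{zz}+\nu h_z$ is expressed through already-controlled terms; integrating this ODE in $z$ with $\nu>0$ gives regularity up to $z=0$. Hence $h\in C^\infty$ up to $z=0$, and the graph $x_n=h(x',0,t)$ of $\Gamma$ is $C^\infty$. Since $v_{x_n}=1/h_z$ is smooth and positive, $v\in C^\infty(\overline{\Omega})$ for $t\in(0,\varepsilon_0)$.
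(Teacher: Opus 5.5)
The paper gives no proof of this statement (it is simply quoted from \cite{DH98}), and your outline is exactly the Daskalopoulos--Hamilton argument: the level-set change of variables $x_n=h(x',z,t)$, i.e.\ the function $f$ of Section~\ref{s4}; Schauder theory in cycloidal H\"older spaces for the model operator $\partial_t-(z\partial_{zz}+\nu\partial_z+z\Delta_{x'})$, here with $\nu=1/(m-1)$; a perturbation argument for short time; and a bootstrap in which $z$-derivatives are recovered by integrating $F+(m-1)zF_z=(m-1)z^{1-\nu}\partial_z(z^{\nu}F)$ from $z=0$, where $F=(1+|D_{x'}h|^2)/h_z$. Two details need correcting: the model operator is invariant under $(z,x',t)\mapsto(\lambda z,\lambda x',\lambda t)$, so the cycloidal distance must be parabolic in time, namely $\bar s(x_1,x_2)+\sqrt{|t_1-t_2|}$ with $\bar s$ the distance of $(dz^2+|dx'|^2)/z$, not a $dt^2$ term (otherwise your spaces demand H\"older exponent $\alpha$ in $t$ instead of $\alpha/2$, and the Schauder estimate you wrote is false); and since the data are only $C^{2,\alpha}$, the differentiation in $x'$ and $t$ must be done with a time cut-off vanishing near $t=0$, which is why smoothness is claimed only for $t\in(0,\varepsilon_0)$.
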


Finally, we state a $C^{\infty}$-regularity result proved in \cite{K99}. It shows that as long as $v$ is linear growth with respect to a certain direction near the free boundary, then $v$ is smooth up to the free boundary. The proof of this $C^{\infty}$-regularity result uses the local coordinate change in \cite{DH98} and it also can be found in the proof of Theorem 5.1 in \cite{DHL01}. 
\begin{theorem}[\cite{K99,DHL01}]
	\label{thm tool}
	Let $(x_0,t_0)\in\Gamma$ be a boundary point. Assume that there exists constants $\delta_0,C_0>0$ and a unit vector $\mu$ such that
	\begin{align*}
		\left|\nabla v\left(x,t\right)\right|\leq C_0^{-1},\qquad D_{\mu}v\left(x,t\right)\geq C_0\quad\mbox{for all $(x,t)\in U_{\delta_0}(x_0,t_0)$},
	\end{align*}
	where
	\begin{align*}
		U_{\delta_0}(x_0,t_0):=\left\{(x,t):x\in\Omega(t)\cap B_{\delta_0}(x_0),\ t\in\left[t_0-\delta_0,t_0\right]\right\}
	\end{align*}
	Then $v(x,t)\in C^{\infty}(\overline{\Omega}\cap U_{\delta_0/2}(x_0,t_0))$ and $\Gamma\cap U_{\delta_0/2}\left(x_0,t_0\right)$ is smooth.
\end{theorem}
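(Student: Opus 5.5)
We plan to follow the Daskalopoulos--Hamilton strategy: use the non-degeneracy $D_\mu v\ge C_0$ to flatten the free boundary by a partial hodograph transform, and then run the degenerate regularity theory for the transformed problem. Since the hypotheses are open and quantitative, we first approximate $v$ by smooth positive solutions that satisfy the same bounds on $U_{\delta_0}(x_0,t_0)$, up to an arbitrarily small loss in the constants. We derive estimates depending only on $n,m,C_0,\delta_0$, and pass to the limit at the end. After a rotation we may assume $\mu=e_n$ and write $x=(x',x_n)$. Because $v_{x_n}\ge C_0>0$, the implicit function theorem lets us solve $v(x',x_n,t)=z$ as $x_n=h(x',z,t)$ for $z\ge 0$ small. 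The free boundary then becomes the fixed boundary $\{z=0\}$, with $\Gamma\cap U_{\delta_0}=\{x_n=h(x',0,t)\}$.

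The next step is to translate the hypotheses into a priori bounds on $h$. We have $h_z=1/v_{x_n}\in[C_0,C_0^{-1}]$ after adjusting constants, since $v_{x_n}\le|\nabla v|\le C_0^{-1}$. We also have $\nabla' h=-\nabla' v/v_{x_n}$, so $|\nabla' h|\le C_0^{-2}$. Finally $h_t=-v_t/v_{x_n}$, and $v_t=(m-1)v\Delta v+|\nabla v|^2$. The term $|\nabla v|^2$ is bounded by hypothesis, while $v\Delta v$ is controlled in the sense of a bound on $zD^2$ in the transformed variables. Thus $h$ is Lipschitz in $(x',z)$ with $h_z$ bounded away from $0$ and $\infty$. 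Substituting into \eqref{pme v} gives a fully nonlinear, but in fact quasilinear, equation of the form
\[
h_t = (m-1)\,z\,a^{ij}(Dh)\,\partial_{ij}h + b(Dh),
\]
where the derivatives are taken in $(x',z)$ and $a^{ij}$ is the pulled-back inverse metric. At $z=0$ this reduces to Darcy's law. The Lipschitz bounds make $a^{ij}$ uniformly elliptic, so the equation is of the model type $h_t=z h_{zz}+z\Delta' h+\nu h_z+\dots$ with $\nu>0$ bounded below. This is exactly the degenerate structure, with the cycloidal metric $ds^2=dz^2/z+|dx'|^2/z$, that is treated in \cite{DH98,K99}.

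The main obstacle is upgrading the Lipschitz bound on $h$ to $C^{1,\alpha}$ with respect to the cycloidal distance. The equation is only degenerate-parabolic, and its coefficients depend on $Dh$, which is so far merely bounded and measurable. Here we invoke Koch's Hölder estimate \cite{K99}, the analogue of Krylov--Safonov/De Giorgi for operators $z\partial^2+\nu\partial_z$ with bounded measurable coefficients. We apply it to the difference quotients of $h$ in $x'$ and $t$, which satisfy linear equations of this type, and then recover $h_z$ from the equation. Once $Dh\in C^\alpha_s$, the coefficients are Hölder, and the Schauder estimates of \cite{DH98} in the weighted spaces $C^{2+\alpha}_s$ apply on a slightly smaller cylinder. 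We then bootstrap: tangential derivatives $\partial_{x'}^k\partial_t^l h$ satisfy the same kind of linear equation with Hölder coefficients, and $z$-derivatives are gained from the equation. This yields $h\in C^\infty$ up to $z=0$ on the half cylinder corresponding to $U_{\delta_0/2}$. Inverting the transform, $v$ is smooth on $\overline{\Omega}\cap U_{\delta_0/2}(x_0,t_0)$ and $\Gamma=\{x_n=h(x',0,t)\}$ is a smooth graph. Since all the estimates are uniform along the approximation, they pass to the limit.
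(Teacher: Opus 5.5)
Your overall route is the one the paper relies on. The paper does not prove Theorem~\ref{thm tool} itself; it cites \cite{K99,DHL01}, whose argument is exactly the coordinate change of \cite{DH98}, Koch's H\"older estimate for the linearized degenerate problem, and the Schauder theory of \cite{DH98} followed by a bootstrap.

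\textbf{The opening approximation does not work as written.} Convergence of smooth positive solutions $v_j\to v$, uniformly or even in $C^\infty_{loc}(\Omega)$, gives no control of $\nabla v_j$ where $v_j$ is small. So nothing forces $|\nabla v_j|\le C_0^{-1}$ and $D_\mu v_j\ge C_0-o(1)$ near the region that replaces $\Gamma$. Moreover, a positive $v_j$ has no degenerate boundary at all. Its transform lives on $\{z\ge\gamma_j\}$, and uniform estimates up to that nondegenerate bottom would need boundary information you do not have (compare Section~\ref{s3}, where even preserving the qualitative cone monotonicity under approximation takes real work).

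The step is also unnecessary, and your later sentences about $\{z=0\}$ in fact work with $v$ itself. Since $v$ is smooth in $\Omega$, $h$ is smooth for $z>0$ and Lipschitz up to $z=0$. The flux $z^{\frac{1}{m-1}}(D_{x'}h,-F)$ tends to $0$ as $z\to0$. Hence the weighted weak form $z^{-\frac{m-2}{m-1}}h_t=(m-1)\operatorname{div}\bigl(z^{\frac{1}{m-1}}(D_{x'}h,-F)\bigr)$ of \eqref{pme f} holds with test functions that need not vanish on $\{z=0\}$. Translation invariance in $x'$ and $t$ then shows that difference quotients $w$ satisfy the natural no-flux weak formulation of $z^{-\frac{m-2}{m-1}}w_t=(m-1)\operatorname{div}\bigl(z^{\frac{1}{m-1}}\bar{\mathcal A}Dw\bigr)$. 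Here $\bar{\mathcal A}$ is the average of the matrix $\mathcal A$ of Section~\ref{s4} over the segment between the two gradients, hence measurable, bounded and uniformly positive definite. This divergence-form (De Giorgi--Nash--Moser type) setting is what Koch's H\"older estimate needs.

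\textbf{Two further steps need arguments rather than assertions.}

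First, the $t$-difference quotients are bounded only if $h_t=-v_t/D_{x_n}v$ is bounded. Theorem~\ref{thm tool} does not assume this; in the application, \eqref{v up grad} supplies it. To get it, show first that $\Gamma$ moves backward in time with bounded speed, using Proposition~\ref{CF prop} with the local bound $\Delta v\ge-K$ from Proposition~\ref{local AB prop}, as in the proof of Theorem~\ref{thm low grad 2}. Then $v$ is comparable to $d$, the distance to $\Gamma$, on a backward parabolic cylinder of size comparable to $d$. Rescaling and interior estimates give $|v_t|+d\,|D^2v|\le C$.

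Second, $h_z$ cannot be read off pointwise from \eqref{pme f}, because that equation contains $zF_z$. Instead, integrate $\partial_z\bigl(z^{\frac{1}{m-1}}F\bigr)=z^{\frac{1}{m-1}}\Delta_{x'}h-\frac{1}{m-1}z^{\frac{1}{m-1}-1}h_t$ from $z=0$. Use the bound $|\Delta_{x'}h|\le Cz^{\beta-1}$, which the H\"older continuity of $D_{x'}h$ gives at the Whitney scale via interior estimates plus interpolation. This yields $F(x',z,t)=-h_t(x',0,t)+O(z^{\beta})$. Since $F\ge C_0$, it follows that $h_z=(1+|D_{x'}h|^2)/F$ is H\"older in the cycloidal metric. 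Only then are the coefficients H\"older for the Schauder step.

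With these repairs your outline is a faithful rendering of the cited proof.
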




\section{Smooth approximation preserving local cone}\label{s3}

In this section, we prove a smooth approximation that preserves the local cone condition. We want to construct smooth Cauchy-Dirichlet data that preserve the local cone condition. The initial datum can be obtain by a spatial modified. However, the lateral boundary is hard to construct. Our main idea is to choose a cube and identity it as the torus
on which the lateral boundary vanishes. Then we obtain a sequence of smooth $v_j$ and we show $v_j\to v$ on the local domain. Since we want to introduce the torus, our solution need to be periodized. In order to preserve the local cone condition in the periodization, we use the level set function $f(x',z,t)$. To make true $f$ is well-defined, we first prove the following lemma.

Using the setting in Remark \ref{remark 1}, the local cone condition gives a representation of the free boundary. Precisely, there exists a Lipschitz function $f(x',t)$ with Lipschitz constant $L$ such that
\begin{align*}
	\Omega=\left\{(x',x_n,t):x_n>f(x',t)\right\}\quad\text{on $B_1\times[-1,0]$}.
\end{align*}

\begin{lemma}\label{lem 3.1}
	 For any small $\delta>0$, on $B_{1-\delta}\times[-1+\delta,0]$
	\begin{align*}
		D_{x_n}v>0,\qquad |D_{x'}v|\leq L D_{x_n}v\quad\text{on $\ome$}.
	\end{align*}
	Fix a small constant $Z_0>0$ and for all $0<z\leq Z_0$, equation $v\left(x',f\left(x',z,t\right),t\right)=z$ define a unique smooth function $f$ with
		\begin{align*}
		D_z f=\frac{1}{D_{x_n}v},\qquad D_{x'}f=-\frac{D_{x'}v}{D_{x_n}v},\qquad f_t=-\frac{v_t}{D_{x_n}v},\qquad\left|D_{x'}f\right|\leq L.
	\end{align*}
	It extends continuously to $z=0$, with $f(x',0,t)=f(x',t)$.
\end{lemma}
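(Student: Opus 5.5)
The plan is to turn the one-sided monotonicity in Remark \ref{remark 1} into an infinitesimal statement, upgrade it to strict positivity of $D_{x_n}v$ by a strong-maximum-principle argument, and then read off $f$ from the implicit function theorem. Throughout I work in the region $\Omega$, where $v>0$ and the equation \eqref{pme v} is uniformly parabolic. There $v$ is smooth by classical interior regularity, so all derivatives below are classical.

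\emph{Step 1 (nonstrict inequalities).} Fix $(x,t)\in\Omega\cap(B_{1-\delta}\times[-1+\delta,0])$ and $V=(\xi,\eta)\in\mathcal C_{e_n,\theta}$ with $|V|<\delta$. Then $x+hV\in B_1$ for $h\in(0,1]$, so Remark \ref{remark 1} gives $v(x+hV,t)\ge v(x,t)$. Dividing by $h$ and letting $h\downarrow0$ yields $D_Vv\ge0$ for every direction $V$ of the cone. Taking $V=e_n$ gives $D_{x_n}v\ge0$. Taking $V=(\xi,L|\xi|)$ with $\xi=-D_{x'}v/|D_{x'}v|$ gives $L D_{x_n}v-|D_{x'}v|\ge0$, which is the second inequality.

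\emph{Step 2 (strictness; the main obstacle).} The function $w=D_{x_n}v\ge0$ satisfies the linearized equation
\[
w_t=(m-1)v\Delta w+(m-1)w\Delta v+2\nabla v\cdot\nabla w ,
\]
which is uniformly parabolic with smooth coefficients on compact subsets of $\Omega$. Suppose $w(x_1,t_1)=0$ at some interior point. The strong maximum principle then forces $w\equiv0$ on the connected component of $\Omega\cap\{t\le t_1\}$ that reaches $(x_1,t_1)$ through a backward path. Along the vertical segment from $(x_1,t_1)$ downward, $v$ would then be constant until the segment meets $\Gamma$. Because $\Omega=\{x_n>f(x',t)\}$ is a supergraph, this segment does meet $\Gamma$, where $v=0$, while $v(x_1,t_1)>0$. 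This is a contradiction. The delicate point is that the segment must stay inside the region where Remark \ref{remark 1} applies, and this is why the statement shrinks to $B_{1-\delta}$. If the segment would exit $B_1$ first, I would instead use the full cone of directions $D_Vv\ge0$: since $|D_{x'}v|\le Lw$, we get $\nabla v=0$ wherever $w=0$. By the strong maximum principle, $\nabla v\equiv0$ on the backward component, so $v$ is constant on a spatial neighbourhood that can be propagated to $\Gamma$. This again contradicts $v>0$ there and $v=0$ on $\Gamma$.

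\emph{Step 3 (the level-set function).} Fix $(x',t)$ and $0<z\le Z_0$, with $Z_0$ small enough that the level sets $\{v=z\}$ in the region still cross the vertical line above the graph of $f(\cdot,t)$. This is possible by continuity of $v$ and $v=0$ on the graph. The map $x_n\mapsto v(x',x_n,t)$ is strictly increasing on $\{x_n>f(x',t)\}$ by Step 2. It is continuous, equals $0$ at $x_n=f(x',t)$, and is positive above. Hence $v(x',x_n,t)=z$ has a unique solution $x_n=f(x',z,t)$. The implicit function theorem, with $D_{x_n}v>0$, gives smoothness of $f$. Differentiating the identity $v(x',f,t)=z$ in $z$, $x'$ and $t$ gives the three formulas
\[
D_zf=\frac{1}{D_{x_n}v},\qquad D_{x'}f=-\frac{D_{x'}v}{D_{x_n}v},\qquad f_t=-\frac{v_t}{D_{x_n}v}.
\]
Step 1 then gives $|D_{x'}f|\le L$.

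\emph{Step 4 (extension to $z=0$).} The function $f(x',z,t)$ is increasing in $z$ and bounded below by $f(x',t)$. Its limit as $z\downarrow0$ is a point where $v=0$ in the closure of $\Omega$, and hence equals $f(x',t)$ by uniqueness of the boundary point on the vertical line. Since the family $f(\cdot,z,\cdot)$ is monotone in $z$ and the limit $f(x',t)$ is continuous, Dini's theorem upgrades this to locally uniform convergence. This gives the continuous extension.
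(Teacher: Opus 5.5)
Your proof is correct. Steps 1, 3 and 4 match the paper; the genuine difference is in Step 2.

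\textbf{The paper's route.} It proves $D_{x_n}v(X,T)>0$ constructively and forward in time. At an earlier time $T'<T$ it integrates $D_{x_n}v$ down the vertical line to the free boundary, which produces a point $(X',Y_n,T')$ with $D_{x_n}v>0$. It then uses Proposition \ref{local AB prop} to keep $v$ bounded below on a neighbourhood of that segment for $t\in[T',T]$, so the linearized equation is uniformly parabolic there. Finally the parabolic Harnack inequality gives $D_{x_n}v(X,T)\ge C\,D_{x_n}v(X',Y_n,T')$.

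\textbf{Your route.} You argue by contradiction with the strong maximum principle, using that zeros of a nonnegative solution propagate along same-time paths inside $\Omega$. This needs only local smoothness of $v$ in $\Omega$; the zeroth-order coefficient $(m-1)\Delta v$ is harmless because the minimum is $0$. It needs no time lag and no positivity-persistence cylinder, and it gives $D_{x_n}v\equiv0$ on the vertical segment at time $t_1$ itself. The paper's route yields a quantitative Harnack-type lower bound; yours is shorter and qualitative, which is all the lemma asserts.

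\textbf{The case where the segment leaves $B_1$.} Your version confronts the situation where the vertical segment exits $B_1$ before reaching $\Gamma$, which the paper's integration argument silently ignores. Your treatment of it, however, is only a sketch (``a spatial neighbourhood that can be propagated to $\Gamma$''). To close it, argue as follows.
\begin{enumerate}
\item $\nabla v\equiv 0$ on the component $\mathcal O$ of $\Omega(t_1)\cap B_1$ containing $x_1$, so $v\equiv v(x_1,t_1)>0$ on $\mathcal O$.
\item $\mathcal O\neq B_1$: since $v(0,0)=0$ and positivity persists forward in time (Proposition \ref{AB prop}), $0\notin\Omega(t_1)$.
\item By connectedness of $B_1$, $\mathcal O$ is not relatively closed in $B_1$, so $\partial\mathcal O\cap B_1\neq\emptyset$. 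Such points lie in $\Gamma(t_1)$, where $v=0$, contradicting continuity of $v$.
\end{enumerate}
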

\begin{proof}
	Smoothness in the positive set and local cone condition give $D_{x_n} v\geq0$. For every unit vector $e\in\R^{n-1}$ and small $\eps>0$, consider vector $\eps(e,L)$ and $\eps(-e,L)$. Then local cone condition gives
	\begin{align*}
		v(x'+\eps e, x_n+\eps L,t)\geq v(x,t)\qquad v(x'-\eps e, x_n+\eps L,t)\geq v(x,t).
	\end{align*}
	Differentiation with respect to $\eps$ at $\eps=0$ in the positive set gives $|D_{x'} v\cdot e|\leq L D_{x_n}v.$
	Taking the supremum in $e$ proves
	\begin{align*}
		\label{3.1}
		\tag{3.1}
		|D_{x'}v|\leq L D_{x_n}v.
	\end{align*}
	
	Fix a positive point $(X,T)$ on $B_{1-\delta}\times[-1+\delta,0]$. Choose $-1<T'<T$ sufficiently close that $v(X,T')>0$. For $\eps>0$ the fundamental theorem of calculus gives
	\begin{align*}
		\int_{f(X',T')+\eps}^{X_n} D_{x_n} v\left(X',x_n,T'\right)dx_n=v\left(X',X_n,T'\right)-v\left(X',f(X',T')+\eps,T'\right).
	\end{align*}
	The second term tends to zero as $\eps\to0$. Since $D_{x_n} v\geq0$, monotone convergence yields
	\begin{align*}
	\int_{f(X',T')}^{X_n} D_{x_n} v\left(X',x_n,T'\right)dx_n=v\left(X',X_n,T'\right)>0.
	\end{align*}
	There is therefore a $Y_n\in\left(f(X',T'),X_n\right)$ with $D_{x_n}v\left(X',Y_n,T'\right)>0$. On the compact segment $\left\{X'\right\}\times[Y_n,X_n]$, $v(\cdot,T')$ has positive minimun. Proposition \ref{local AB prop} then shows $v$ has positive minimum on this segment for all $t\in[T',T]$. Continuity and compactness give a spatial neighborhood of this segment on which $v$ has a uniform positive lower bound for all $t\in[T',T]$. In this new cylinder, interior parabolic estimates infer $v\in C^{\infty}$ with uniform bound. Set $p=D_{x_n} v$. In above cylinder $p$ solves the following equation
	\begin{align*}
		p_t=(m-1)v\Delta p+2\nabla v\nabla p+(m-1)\Delta v p.
	\end{align*}
	Since $p\geq0$ and solves a uniform parabolic equation, Harnack inequality gives a uniform constant $C>0$ such that
	\begin{align*}
		p(X,T)\geq C p(X',Y_n,T')>0.
	\end{align*}
	
	Choose $Z_0$ smaller enough such that $\left\{v=Z_0\right\}$ stays on $B_{1-\delta}\times[-1+\delta,0]$. For any $0<z\leq Z_0$, since $D_{x_n}v>0$, the implicit function theorem gives a function $x_n=f(x',z,t)$ satisfies $v\left(x',f(x',z,t),t\right)=z$. Differentiation of this equation gives
	\begin{align*}
		D_z f=\frac{1}{D_{x_n}v},\qquad D_{x'}f=-\frac{D_{x'}v}{D_{x_n}v},\qquad f_t=-\frac{v_t}{D_{x_n}v}.
	\end{align*}
	The smoothness of $v$ implies the smoothness of $f$  and \eqref{3.1} gives $\left|D_{x'}f\right|\leq L$ in the positive set. The continuity at $z=0$ follows from the continuity of $v$.
\end{proof}

Now we fixed our target domain. Choose a suitable ball  $B_{0}^{n-1}\Subset \pi_{x'}B_{1-\delta}$ and a constant $F_-$ satisfying
\begin{align*}
	-1+\delta \ll F_-<\min \left\{f(x',t):(x',t)\in B_{0}^{n-1}\times\left[-1+\delta,0\right]\right\},
\end{align*}
where the suitable of $B^{n-1}_0$ means the choice of $F_-$ is well-defined. Here and in the following, we use the superscript $n-1$ to represent the ball lies in the $x'$-space. It is easy to see that $\left\{x_n=F_-\right\}$ lies strictly below every zero set and $v=0$ in a strip of fixed positive width above $\left\{x_n=F_-\right\}$ . 

\begin{proposition}\label{smooth appro}
	For any $\delta>0$ in Lemma \ref{lem 3.1}. There are positive smooth functions $u_j$,  a ball $B_*^{n-1}\Subset B_0^{n-1}$, constants $Z_*=\frac{m}{m-1}M_*^{m-1}>0$, $L_*\in\left(0,\infty\right)$, independent of $j$, and numbers $\gamma_j\downarrow 0$, with the following properties.
	\begin{enumerate}
		\item On domain $\left\{(x',x_n,t):x\in B_*^{n-1},\ F_-\leq x_n\leq f(x',Z_*,t),\ t\in\left[-1+\delta,0\right]\right\}$,
		\begin{align*}
			&(u_j)_t=\Delta\left(u_j^m\right),\qquad \gamma_j\leq u_j\leq M_*,\qquad |D_{x'}u_j|\leq L_* D_{x_n}u_j,\\
			&D_{x_n} u_j>0 \quad \text{in the interior of above domain}.
		\end{align*}
		On the bottom boundary $\left\{(x',x_n,t):x\in B_*^{n-1},\  x_n=F_-,\  t\in\left[-1+\delta,0\right]\right\}$,
		\begin{align*}
			u_j=\gamma_j.
		\end{align*}
		On the top boundary $\left\{(x',x_n,t):x\in B_*^{n-1},\  x_n=f(x',Z_*,t),\  t\in\left[-1+\delta,0\right]\right\}$,
		\begin{align*}
			u_j=M_*.
		\end{align*}
		\item For fixed $j$, the corresponding level set functions $f_j$ are well-defined and smooth in the interior of above domain. And for fixed $0<Z<Z_*$ and large $j$, all spatial and time derivatives of $f_j(x',Z,t)$ are uniformly bounded in $j$.
		\item On above domain, $u_j\to u$ and $\dis v_j=\frac{m}{m-1}u_j^{m-1}\to v$ uniformly. Additionally, on the compact of $\Omega$, the convergence holds with all spatial and time derivatives.
	\end{enumerate}
\end{proposition}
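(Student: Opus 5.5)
We follow the strategy announced at the start of this section. We periodize the level-set description of $v$ in the $x'$ variables, solve a smooth, uniformly parabolic problem on a torus in $x'$ times an interval in $x_n$, and recover the cone condition from a maximum principle for the level-set function.

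\emph{Step 1: localize and build a periodic, positive problem.} Fix $Z_*\le Z_0$ small; with $M_*$ defined from $Z_*$, Lemma \ref{lem 3.1} shows that $f(x',z,t)$ is well defined for $0<z\le Z_*$, that $|D_{x'}f|\le L$, and that it is continuous up to $z=0$.

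Choose balls $B_*^{n-1}\Subset B_1^{n-1}\Subset B_0^{n-1}$ and a cube $Q'\supset B_1^{n-1}$. We modify the family of level graphs $x_n=f(x',z,t)$ outside $B_1^{n-1}$ so that it becomes $Q'$-periodic in $x'$. A convenient way is to use a cut-off $\zeta(x')$ that equals $1$ on $B_1^{n-1}$ and is supported in $B_0^{n-1}$, and set
\[
\tilde f=\zeta f+(1-\zeta)g(z,t),
\]
where $g(z,t)$ does not depend on $x'$. The Lipschitz bound then becomes $|D_{x'}\tilde f|\le L+C|\nabla\zeta|\,|f-g|=:L_*$. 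This is finite and independent of $j$; this is why only a new constant $L_*$ is claimed.

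Monotonicity in $z$ survives, because $\zeta f+(1-\zeta)g$ is increasing in $z$ whenever both terms are. The graphs therefore foliate a region and define a $Q'$-periodic function $\tilde v$ between $x_n=F_-$ and the top surface $x_n=\tilde f(x',Z_*,t)$.

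Next we mollify in $(x',x_n,t)$ at scale $1/j$, working on the level-set function rather than on $v$. Set $\tilde f_j=\tilde f*\rho_j$ and replace $z\in[0,Z_*]$ by $z\in[\gamma_j',Z_*]$, using levels shifted so that the bottom value is $\gamma_j$ on $x_n=F_-$. Since convolution preserves the bound $|D_{x'}\tilde f|\le L_*$, this yields smooth periodic boundary and initial data with $u\ge\gamma_j$, $D_{x_n}u>0$ and the cone bound.

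\emph{Step 2: solve and preserve the cone.} Let $u_j$ solve $u_t=\Delta(u^m)$ on $Q'\times\{F_-<x_n<\tilde f(x',Z_*,t)\}$. The domain is periodic in $x'$, and the Dirichlet data are $u=\gamma_j$ at the bottom and $u=M_*$ at the top. On the top the boundary value is constant, so the top is the level set $\{u_j=M_*\}$ by construction, although the domain moves with $t$.

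Since $\gamma_j\le u_j\le M_*$ by the maximum principle, the equation is uniformly parabolic and $u_j$ is smooth. We then apply the maximum principle to $w=D_{x_n}u_j$ and to $w^\pm_e=L_*D_{x_n}u_j\pm D_{e}u_j$ for unit vectors $e\in\R^{n-1}$. Each of these solves the linearized equation $w_t=\Delta(mu^{m-1}w)$, so nonnegativity propagates from the parabolic boundary.

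On the bottom and top lateral boundaries, $u_j$ is constant along the boundary surface, so its gradient there is normal to that surface. Because the surfaces are graphs with slope at most $L_*$ and $u_j$ increases upward, this gives $w^\pm_e\ge0$ and $w\ge0$ there. In the periodic $x'$ directions there is no boundary at all. Strict positivity $D_{x_n}u_j>0$ in the interior then follows from the strong maximum principle, and the implicit function theorem gives smooth level functions $f_j$, which proves item (2)'s well-posedness.

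\emph{Step 3: convergence.} By uniform boundedness and the Hölder estimates of \cite{CF80}, a subsequence converges uniformly to a continuous weak solution $\bar v$ with the limiting data. On the sub-domain over $B_*^{n-1}$, the data coincide with those of $v$ in the limit. Uniqueness of the Cauchy–Dirichlet problem for the porous medium equation gives $\bar v=v$, so the whole sequence converges. Interior parabolic estimates on compact subsets of $\Omega$ upgrade this to convergence with all derivatives.

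For item (2), the level $Z<Z_*$ lies in the region where $v\ge Z>0$, so the same interior estimates apply. Combined with $D_{x_n}v>0$ from Lemma \ref{lem 3.1}, this gives bounds on $f_j(\cdot,Z,\cdot)$ and all its derivatives, uniform in $j$.

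\emph{Main obstacle.} The delicate point is Step 1: producing periodic lateral data that simultaneously (a) agree with $v$ on $B_*^{n-1}$ up to $o(1)$, (b) keep monotonicity in $x_n$, and (c) keep a uniform cone constant $L_*$, while the top boundary $x_n=f(x',Z_*,t)$ moves in time. Working entirely with the level-set function $\tilde f$, and gluing and mollifying it rather than $v$ itself, is what I would try first. The reason is that both the cone bound and the monotonicity are convex or linear constraints on $\tilde f$: one is a Lipschitz bound in $x'$, the other is monotonicity in $z$. Both are therefore stable under convex combinations and convolution.

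Matching the initial data at $t=-1+\delta$ with the boundary data requires compatibility conditions. I would handle this by also mollifying in time from a slightly earlier time $-1+\delta/2$, and accepting a time shift of size $o(1)$.
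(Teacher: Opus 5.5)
\textbf{There is a genuine gap, in Step 3.} Your Steps 1--2 are essentially what the paper does: you glue the level-set function to an $x'$-independent profile on a torus, and you get the cone condition because the torus has no lateral boundary while the top and bottom are level sets. The trouble is convergence. The limit $\bar v$ of your $u_j$ is the solution of the porous medium equation on the whole periodic domain $\mathbb{T}^{n-1}\times\{F_-<x_n<\tilde f(x',Z_*,t)\}$, with the glued initial datum. This is not $v$.

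The glued function does not solve \eqref{pme v} in the transition region $0<\zeta<1$, because a convex combination of level-set functions is in general not a solution of the nonlinear equation \eqref{pme f}. So $\bar v$ departs from it there. Since the positive phase is connected and the equation is uniformly parabolic in it, the discrepancy is in general felt in the cylinder over $B_*^{n-1}$ as well.

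Your appeal to uniqueness on ``the sub-domain over $B_*^{n-1}$'' does not work. That sub-domain has the lateral boundary $\partial B_*^{n-1}\times(F_-,f(x',Z_*,t))$, and nothing forces $\bar v=v$ there; agreement of initial, top and bottom data is not enough. So item (3) fails, and with it the uniform bounds in item (2) that you derive from it. This is exactly the tension the section has to resolve. On a torus the cone condition survives but the solution forgets $v$. On a local cylinder with lateral data taken from $v$ the limit is $v$, but translation comparison breaks down at the lateral boundary.

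\textbf{The missing idea} is to impose the information from $v$ as an interior constraint that itself satisfies the cone condition. The paper solves the penalized problem $(u_j)_t=\Delta(u_j^m)+R_j(x,t,u_j)$ on the torus, with $R_j=j\beta_j(l_j-u_j)-j\beta_j(u_j-U_j)$. The obstacles $l_j\le U_j$ are mollified, monotonically truncated versions of $\widehat u(x',x_n\mp\eta(x'),t)$, where $\eta$ is a bump that is large on $B_2^{n-1}$ and vanishes outside $B_3^{n-1}$. This construction has three consequences.
\begin{itemize}
\item $l_j=0$ and $U_j=M_*$ over $B_1^{n-1}$, so $u_j$ is a genuine solution there.
\item $l_j$ and $U_j$ are within $\eps_j$ of $p_j$ on the annulus $B_4^{n-1}\setminus B_3^{n-1}$.
\item Both satisfy the cone condition with constant $L_0+\|D\eta\|_\infty$, and $R_j$ is nonincreasing in $u$, so the translation comparison still applies.
\end{itemize}
Convergence then comes from Kato's inequality $\partial_t|u_j-p_j|-\Delta|u_j^m-p_j^m|\le\operatorname{sgn}(u_j-p_j)R_j\le0$, not from uniqueness. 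Here $p_j$ solves a Cauchy--Dirichlet problem on a smoothed local domain with lateral data $b_j\ge u$, $b_j\to u$, and $p_j\to u$ by Lemma \ref{lem 3.3}. The penalty gives $L^1$ smallness of $u_j-p_j$ on the annulus, and a cutoff argument transfers this to the cylinder over $B_3^{n-1}$. Uniform H\"older bounds then upgrade it to uniform convergence over $B_*^{n-1}$.

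A minor point: proving the cone condition through the maximum principle for $L_*D_{x_n}u_j\pm D_eu_j$ needs $C^1$ regularity at the corner where initial and boundary data meet. Comparing $u_j(x+\eps e,t)$ with $u_j(x,t)$ directly, as the paper does, needs only continuity.
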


Choose nested balls 
\begin{align*}
	B_*^{n-1}\Subset B_1^{n-1}\Subset B_2^{n-1}\Subset B_3^{n-1} \Subset B_4^{n-1}\Subset B_5^{n-1}\Subset B_0^{n-1}.
\end{align*}
These balls leaves enough area for the cut-off function in the later. Also we choose $B_{*}^{n-1}$ as large as possible for our later use in Section \ref{s6}.

Now we state our main step of the proof. First, we identify a large cube containing $B_0^{n-1}$ with a flat torus $\mathbb{T}^{n-1}$. In order to periodize $u$, we first periodize $f$. Let $\chi_0$ be the smooth cut-off function with
\begin{align*}
	\chi_0=1\quad\text{on $B_5^{n-1}$},\qquad \chi_0=0\quad\text{on $\left(B_0^{n-1}\right)^c$}.
\end{align*}
Choose a linear function $f_0(z)=d_0+d_1 z$ with constant $d_0>F_-$ and $d_1>0$. Then define
\begin{align*}
	\widehat{f}(x',z,t)=f_0(z)+\chi_0(x')\left(f(x',z,t)-f_0(z)\right)\quad\text{for $x'\in\mathbb{T}^{n-1}$, $z\in\left[0,Z_0\right]$, $t\in\left[-1+\delta,0\right]$},
\end{align*}
where $Z_0$ is given in Lemma \ref{lem 3.1}. Clearly, $\widehat{f}$ is a $x'$-periodic function on $\mathbb{T}^{n-1}$. The $x'$-derivative of $\widehat{f}$ satisfies
\begin{align*}
	|D_{x'}\widehat{f}|\leq |D_{x'}f|+|D\chi_0||f-f_0|\leq L_0<\infty.
\end{align*}
Thus, $\widehat{f}$ preserve local cone condition for a new constant $L_0$.

For $z>0$,
\begin{align*}
	\widehat{f}_z=\chi_0 f_z+(1-\chi_0)d_1>0.
\end{align*}
Then we can define $\widehat{u}$ by inverting $\widehat{f}$ between $0$ and $Z_*$, where $Z_*<Z$ is a smaller constant. Precisely, define $\widehat{u}$ be the unique function solve $\widehat{f}(x',\frac{m}{m-1}\widehat{u}(x',x_n,t)^{m-1},t)=x_n$, for $\widehat{f}(x',0,t)\leq x_n\leq \widehat{f}(x',Z_*,t)$. Set
\begin{align*}
	&\widehat{u}(x',x_n,t)=0\quad\text{for $x_n\leq \widehat{f}(x',0,t)$},\\
	&\widehat{u}(x',x_n,t)=M_*=\left(\frac{m-1}{m}Z_*\right)^{1/(m-1)}\quad\text{for $x_n\geq \widehat{f}(x',Z_*,t)$}
\end{align*}
Therefore, $\widehat{u}$ is a continuous function on $\mathbb{T}^{n-1}\times \R\times\left[-1+\delta,0\right]$ with values in $[0, M_*]$. The continuity of $\widehat{u}$ follows from the continuity of $\widehat{f}$. Since $\left|D_{x'}\widehat{f}\right|\leq L_0$, we can show that local cone condition holds for $\widehat{u}$ with constant $L_0$. Indeed, for $x_n\leq \widehat{f}(x',0,t)$, 
\begin{align*}
	\widehat{u}(x+V,t)\geq 0=\widehat{u}(x,t).
\end{align*}
For $x_n\geq \widehat{f}(x',Z_*,t)$,
\begin{align*}
	\widehat{u}(x+V,t)=M_*=\widehat{u}(x,t)
\end{align*}
 For $\widehat{f}(x',0,t)\leq x_n\leq \widehat{f}(x',Z_*,t)$, let $\widehat{u}(x',x_n,t)=\left(\frac{m-1}{m}p_0\right)^{1/(m-1)}$ and $\eta\geq L_0|\xi|$, then
\begin{align*}
	x_n+\eta\geq \widehat{f}(x',p_0,t)+L_0|\xi|\geq\widehat{f}(x+\xi,p_0,t).
\end{align*}
Therefore
\begin{align*}
	\widehat{u}(x'+\xi,x_n+\eta,t)\geq \left(\frac{m-1}{m}p_0\right)^{1/(m-1)}= \widehat{u}(x',x_n,t).
\end{align*}
Consequently, we periodize $u$ and preserve the local cone condition. Moreover, $\widehat{u}=0$ in a strip of fixed positive width above $\left\{x_n=F_-\right\}$, because $d_0>F_-$. Also, it agrees with $u$ for $
x'\in B_5^{n-1} $ and $F_-< x_n<\widehat{f}(x',Z_*,t)$.

Second, we modify $\widehat{u}$ to obtain the initial datum. Denote
\begin{align*}
	D(t)=\left\{(x',x_n):x'\in\mathbb{T}^{n-1},\ F_-<x_n<\widehat{f}(x',Z_*,t)\right\}.
\end{align*}
Let $\gamma_j\downarrow 0$. Choose $\delta_j\downarrow0$ and $\rho_j>0$ with 
\begin{align*}
	\label{3.2}
	\tag{3.2}
	2(L_0+1)\rho_j<\delta_j.
\end{align*}
Let $\varrho_j$ be a nonnegative smooth spatial convolution kernel, supported in $\left\{|y|<\rho_j\right\}$, with periodic convolution in $x'$. Define 
\begin{align*}
	g_j(x)=\gamma_j^{}+(1-\gamma_j/M_*)\int \varrho_j(y)\widehat{u}(x-y+\delta_je_n,-1+\delta)dy\quad\text{on $D(-1+\delta)$}.
\end{align*}
By definition, smooth function $g_j$ satisfies $\gamma_j\leq g_j\leq M_*$ and converges uniformly to $\widehat{u}$. It preserves the local cone condition with same constant $L_0$.

Moreover, for all $|y|\leq \rho_j$, the vector $\delta_j e_n-y$ belong to the cone in local cone condition by \eqref{3.2}. Therefore
\begin{align*}
	\label{3.3}
	\tag{3.3}
	g_j(x)\geq \gamma_j+(1-\gamma_j/M_*)\widehat{u}(x,-1+\delta)\geq \widehat{u}(x,-1+\delta).
\end{align*}
Recall that $\widehat{u}(x,-1+\delta)=M_*$ for  $x_n\geq \widehat{f}(x',Z_*,-1+\delta)$. Then, \eqref{3.2} gives 
\begin{align*}
	g_j(x)=M_*\quad\text{for $\widehat{f}(x',Z_*,-1+\delta)-\left(\delta_j-(L_0+1)\rho_j\right)\leq x_n\leq \widehat{f}(x',Z_*,-1+\delta)$}.
\end{align*}
Similarly, $g_j=\gamma_j$ in a strip of fixed positive width above $\left\{x_n=F_-\right\}$.

For the third step, we introduce a smooth intermediate function $p_j$ on the local domain and prove $p_j\to u$. Since we will construct $u_j$ on the torus, it is hard to prove the convergence directly on the local domain. So, we introduce $p_j$ and prove $|u_j-p_j|\leq 1/j$ on the local domain later. Then we can use $p_j\to u$ to show that final convergence.

To obtain the smooth $p_j$ on the local domain, we smooth this domain first. Take a ball $B^{n-1}$ with $B_5^{n-1}\Subset B^{n-1}\Subset\left\{\chi_0=1\right\}$. This is possible because we can let $\chi_0=1$ near $B_5^{n-1}$. Thus $\widehat{u}=u$ for $x'\in B^{n-1}$. Consider the domain $\left\{x'\in B^{n-1},\ x_n\in\left(F_-,\widehat{f}(x',Z_*,t)\right)\right\}$. Since $\widehat{f}(x',Z_*,t)$ is space-time smooth, we can smooth above domain at the spatial corner, outside $B_5^{n-1}$, to obtain a space-time smooth domain $D_{\text{loc}}(t)$. 

Consider the problem
\begin{equation}
	\notag
	\left\{
	\begin{aligned}
		&(p_j)_t=\Delta(p_j^m), &&(x,t)\in D_{\text{loc}}(t)\times [-1+\delta,0],\\
		&p_j(x,-1+\delta)=g_j(x), &&x\in D_{\text{loc}}(-1+\delta),\\
		&p_j(x,t)=b_j(x,t), &&(x,t)\in  \partial D_{\text{loc}}(t)\times[-1+\delta,0].
	\end{aligned}
	\right.
\end{equation}
Choose continuous lateral data $b_j$ satisfying 
\begin{align*}
	\gamma_j\leq b_j\leq M_*,\qquad b_j\geq u,\qquad  b_j(x,-1+\delta)=g_j(x),\qquad |b_j-u|\to 0.
\end{align*}
Additionally, we require $b_j=\gamma_j$ for $x_n=F_-$ and $b_j=M_*$ for $x_n=\widehat{f}(x',Z_*,t)$.

In fact, we can give an explicit construction. Let $\Phi_t$ be a smooth family of diffeomorphisms from $D_{\text{loc}}(-1+\delta)$ onto $D_{\text{loc}}(t)$, transporting the top and bottom boundary onto each other. Set $G_j(x,t)=g_j(\Phi_t^{-1}x)$ and $q_j=\gamma_j+(1-\gamma_j/M_*)u$. Choose a smooth cut-off function $0\leq \theta_j(t)\leq 1$ satisfying $\theta_j(-1+\delta)=1$ and  $\theta_j(t)=0$ for $t\geq -1+\delta+\eta_j$, where $\eta_j\downarrow0$. Define
\begin{align*}
	b_j=q_j+\theta_j\left(G_j-q_j\right)_+.
\end{align*}
Since $\gamma_j\leq G_j,q_j\leq M_*$, we obtain $\gamma_j\leq b_j\leq M_*$. By definition, $b_j\geq q_j\geq u$. At $t=-1+\delta$, inequality \eqref{3.3} gives $G_j=g_j\geq q_j$, hence $b_j=g_j$. For $x_n=F_-$, $G_j=q_j=b_j=\gamma_j$ and for $x_n=\widehat{f}(x',Z_*,t)$, $G_j=q_j=b_j=M_*$. The uniform convergence follows easily from the uniform convergence of $g_j$ to $u$.

 For each $j$, above Cauchy-Dirichlet problem on a smooth moving domain admits a unique continuous solution $p_j$. The comparison principle gives
 \begin{align*}
 	\gamma_j\leq p_j\leq M_*,\qquad p_j\geq u.
 \end{align*}
Hence it is a uniform parabolic problem. The interior parabolic estimates gives the smoothness of $p_j$ in the interior and up to the top and bottom boundary away from the lateral face for $t\in[-1+\delta,0]$.

Now we show the convergence of $p_j\to u$ on the local domain
\begin{lemma}\label{lem 3.3}
	Set $D_5(t)=\left\{(x',x_n):x'\in B_5^{n-1},\ x_n\in\left(F_-,\widehat{f}(x',Z_*,t)\right)\right\}$. Then
	\begin{align*}
		d_j:=\sup_{D_5(t)\times[-1+\delta,0]}|p_j-u|\to 0.
	\end{align*}
\end{lemma}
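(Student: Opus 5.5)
We prove that $p_j\to u$ uniformly on $D_5(t)\times[-1+\delta,0]$ by comparison: $u$ is a subsolution from below, and the only real issue is bounding $p_j$ from above.

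\emph{Lower bound.} By comparison, $p_j\ge u$ on the whole smooth moving domain $D_{\text{loc}}(t)\times[-1+\delta,0]$. So it suffices to show $\sup_{D_5}(p_j-u)\to0$.

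\emph{Upper bound.} This is the main step. The natural approach is $L^1$-contraction, or an $L^\infty$ comparison with an approximating supersolution, on the moving domain. I would use the $L^1$ route because it handles boundary data well. For two solutions $w_1,w_2$ of $w_t=\Delta(w^m)$ on $D_{\text{loc}}(t)$, Kato's inequality gives
\begin{align*}
\frac{d}{dt}\int_{D_{\text{loc}}(t)}(w_1-w_2)_+\,dx\le \int_{\partial D_{\text{loc}}(t)}\Big((w_1-w_2)_+V_\nu+\partial_\nu(w_1^m-w_2^m)_+\Big)\,dS.
\end{align*}
When the lateral data differ, the boundary flux term is not directly controlled. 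To avoid it, I would compare with $\bar u$, the solution with the same lateral data $b_j$ but initial datum $u(\cdot,-1+\delta)$ instead of $g_j$. Then:
\begin{itemize}
\item $\|p_j-\bar u\|_{L^1}\le\|g_j-u\|_{L^1}\to0$, since the boundary values coincide.
\item $\bar u$ is close to $u$ by the maximum principle, because the boundary data differ by $\sup|b_j-u|\to0$ and $u$ is a weak solution. More precisely, $q_j=\gamma_j+(1-\gamma_j/M_*)u$ is nearly a solution, so I would instead compare $u$ with the solution $\tilde u$ having lateral data $\max(b_j,u)$ via uniform continuous dependence on boundary data. This gives $\sup|\bar u-u|\le\sup|b_j-u|+o(1)$.
\end{itemize}
This yields $\|p_j-u\|_{L^1(D_{\text{loc}}(t))}\to0$ uniformly in $t$.

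\emph{Upgrade to uniform convergence.} The $p_j$ are uniformly bounded in $[0,M_*]$, so the Caffarelli--Friedman interior H\"older estimates (uniform modulus of continuity for bounded solutions of the PME, via the pressure $\frac{m}{m-1}p_j^{m-1}$) give equicontinuity on compact subsets of the cylinder. Near the top and bottom faces, the data are constant ($M_*$ and $\gamma_j$), so equicontinuity extends up to those faces; this follows from barriers, e.g.\ the strip where $g_j=\gamma_j$ and the monotonicity there. Near $t=-1+\delta$, we use the uniform convergence of $g_j$. The set $D_5$ stays a fixed distance from the smoothed lateral corners, since $B_5^{n-1}\Subset B^{n-1}$.

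\emph{Conclusion.} Equicontinuity plus $L^1$ convergence imply uniform convergence on $\overline{D_5(t)}\times[-1+\delta,0]$: if $|p_j-u|\ge\varepsilon$ at some point, then $|p_j-u|\ge\varepsilon/2$ on a ball of fixed size, contradicting $L^1\to0$. Hence $d_j\to0$.

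The main obstacle is the modulus of continuity for $p_j$ uniform in $j$ up to the initial time and top/bottom boundaries. If the barrier argument there is delicate, I would instead take a subsequence converging locally uniformly in the interior, identify the limit as $u$ by uniqueness of the Cauchy--Dirichlet problem, and handle the boundary layers using $u\le p_j\le$ a simple explicit supersolution built from $b_j$.
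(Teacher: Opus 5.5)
\textbf{The lower bound is fine; the upper bound has a genuine gap.} Your $p_j\ge u$ is exactly the paper's step. The gap sits at the one place where the work lies. You say $\bar u$ is close to $u$ ``by the maximum principle,'' but for $u_t=\Delta(u^m)$ comparison only orders solutions. It does not give an estimate like $\sup|\bar u-u|\le\sup|b_j-u|$. The reason is that $u+c$ is not a solution. Also, the difference of two solutions satisfies $w_t=\Delta(aw)$ with $a=(u_1^m-u_2^m)/(u_1-u_2)$, and its zeroth-order coefficient $\Delta a$ has no sign.

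Your ``more precisely'' sentence does not repair this. Since $b_j\ge u$, the data $\max(b_j,u)$ is just $b_j$, so $\tilde u=\bar u$. Appealing to ``uniform continuous dependence on boundary data'' is then assuming what the lemma asserts: sup-norm convergence of solutions whose parabolic data converge uniformly to $u$ from above. The $L^1$ decomposition only moves the difficulty from the initial face to the lateral face.

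Your fallback at the end has the same problem. To identify a subsequential limit with $u$ by uniqueness, the limit must attain the lateral values $u$. That requires an upper barrier for $p_j$ at $\partial D_{\text{loc}}(t)$, uniform in $j$. There is no ``simple explicit supersolution built from $b_j$'' for the PME at a lateral boundary where the data are merely continuous. Your equicontinuity claims up to $x_n=F_-$ and $t=-1+\delta$ are likewise only sketched.

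\textbf{The missing idea.} You need a genuine solution that lies above $u$, is uniformly close to $u$, and exceeds $u$ by a strictly positive margin on the parabolic boundary of $D_{\text{loc}}(t)\times[-1+\delta,0]$. The paper builds it on a fixed larger cylinder $\mathcal Q=Q\times(-1,0]$.
\begin{enumerate}
\item Let $u_\eps$ solve the PME on $\mathcal Q$ with data $u+\eps$. Then $u\le u_\eps$, $u_\eps\ge\eps$, and $u_\eps\to u$ uniformly (monotone approximation by positive solutions plus uniqueness).
\item Next, $u_\eps>u$ on $\overline{\mathcal Q}$. This is trivial where $u\le\eps/2$. On $\{u>\eps/2\}$, apply the minimum principle to the linear equation for $w=v_\eps-v$, using the subsolution $\kappa_\eps e^{-C_\eps(t+1)}$.
\item By compactness, $u_\eps-u$ has a positive minimum on the parabolic boundary of the local domain. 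Since $g_j,b_j\to u$ uniformly, $g_j\le u_\eps$ and $b_j\le u_\eps$ for $j$ large.
\item Comparison then gives $u\le p_j\le u_\eps$, hence $d_j\le\sup(u_\eps-u)$ for large $j$. Let $j\to\infty$ and then $\eps\to0$.
\end{enumerate}
This sandwich yields uniform convergence on all of $D_{\text{loc}}(t)$ at once. The $L^1$ contraction and the equicontinuity/boundary-regularity step become unnecessary.
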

\begin{proof}
	Consider a cylinder $\mathcal{Q}$ satisfying
	\begin{align*}
		D_{\text{loc}}(t)\times[-1+\delta,0]\Subset\mathcal{Q}=Q\times(-1,0],
	\end{align*}
	where $Q\Subset B_1$ is a fixed smooth domain. We claim for any $\eps>0$, there are strictly positive solutions $u_{\eps}$ satisfying equation \eqref{pme u} on $\mathcal{Q}$, with $u_{\eps}\geq \eps$, such that $u_{\eps}>u$ on $\overline{\mathcal{Q}}$. Also, when $\eps \to 0$, $u_{\eps}\to u$ uniformly and $u_{\eps}$ is smooth on any compact subset of $\mathcal{Q}$.
	
	Setting $u+\eps$ as the initial and lateral boundary data, we obtain a continuous solution $u_{\eps}$ from the standard parabolic theory. By comparison principle, $u_{\eps}\geq \eps$ and $u_{\eps}\to u$ uniformly. By interior parabolic estimates, $u_{\eps}$ is smooth on any compact subset of $\mathcal{Q}$. Then we need to verify the strict inequality $u_{\eps}>u$ on $\overline{\mathcal{Q}}$.
	
	Write $v_{\eps}=u_{\eps}^{m-1}$. On the set $\mathcal{Q}_{\eps}=\left\{u>\eps/2\right\}\cap \mathcal{Q}$, $v_{\eps}$ and $v$ are smooth. Then function $w=v_{\eps}-v\geq0$ satisfies
	\begin{align*}
		w_t=(m-1)v_{\eps}\Delta w+\left(\nabla v_{\eps}+\nabla v\right)\cdot \nabla w+(m-1)(\Delta v)w.
	\end{align*}
	The smoothness of $v$ on $\mathcal{Q}_{\eps}$ gives a constant $C_{\eps}>0$ such that $(m-1)\Delta v\geq -C_{\eps}$.
	
	On the initial and lateral boundary of $\mathcal{Q}$,
	\begin{align*}
		w=(u+\varepsilon)^{m-1}-u^{m-1}\geq \min_u\left((u+\varepsilon)^{m-1}-u^{m-1}\right)>0.
	\end{align*}
	On the remaining boundary $\left\{u=\eps/2\right\}$,
	\begin{align*}
		w\geq \eps^{m-1}-(\eps/2)^{m-1}>0.
	\end{align*}
	Choose a constant $\kappa_{\eps}>0$ satisfying
	\begin{align*}
		\kappa_{\eps}\leq \min_u\left((u+\varepsilon)^{m-1}-u^{m-1}\right),\qquad\kappa_{\eps}\leq \eps^{m-1}-(\eps/2)^{m-1}.
	\end{align*}
	Then it is easy to check function $\kappa_{\eps} e^{-C_{\eps}(t+1)}$ is the subsolution of the linear equation of $w$. The standard minimum argument, applied on $\mathcal{Q}_{\eps}$, gives
	\begin{align*}
		w\geq \kappa_{\eps} e^{-C_{\eps}(t+1)}>0\quad\text{on $\mathcal{Q}_{\eps}$}.
	\end{align*}
	Notice that a negative minimum of $w-\kappa_{\eps} e^{-C_{\eps}(t+1)}$ would occur in the interior of $\mathcal{Q}_{\eps}$, so no regularity of boundary $\left\{u=\eps/2\right\}$ is needed for minimum argument. When $u\leq \eps /2$, $u_{\eps}\geq \eps >u$. Therefore, we prove the claim.
	
	Fix $\eps>0$. Since $D_{\text{loc}}(t)\times[-1+\delta,0]\Subset\mathcal{Q}$, the claim shows that $u_{\eps}>u$ on $D_{\text{loc}}(t)\times[-1+\delta,0]$. The uniform convergence of $g_j\to u$ and $b_j\to u$ implies, for all sufficiently large $j$, depending on $\eps$,
	\begin{align*}
		g_j(x)\leq u_{\eps}(x,-1+\delta)\quad\text{on $D_{\text{loc}}(-1+\delta)$},\qquad b_j\leq u_{\eps}\quad\text{on $\partial D_{\text{loc}}(t)\times [-1+\delta,0]$}.
	\end{align*}
	Then the comparison principle gives $p_j\leq u_{\eps}$ on $D_{\text{loc}}(t)\times[-1+\delta,0]$. Since $p_j\geq u$, letting $j
	\to\infty$  first and then $\eps \to 0$, we finis the proof.
\end{proof}

For the fourth step, we want to show $|u_j-p_j|\leq 1/j$. To achieve this, we introduce the penalty term in the equation of $u_j$. Then we define the penalty functions $l_j$ and $U_j$ using  suitable cut-off functions. Take a smooth function $\eta(x')\geq0$ satisfying
\begin{align*}
	\eta=\eta_0\quad\text{on $B_2^{n-1}$},\qquad \eta=0\quad\text{on $\left(B_3^{n-1}\right)^c$},
\end{align*}
where
\begin{align*}
	\eta_0>1+\sup_{(x',t)\in B_5\times[-1+\delta,0]}\left(\widehat{f}(x',Z_*,t)-F_-\right).
\end{align*}

Define on $\mathbb{T}^{n-1}\times \R \times[-1+\delta,0]$,
\begin{align*}
	l(x',x_n,t)=\widehat{u}(x',x_n-\eta(x'),t),\qquad U(x',x_n,t)=\widehat{u}(x',x_n+\eta(x'),t).
\end{align*}
The local cone condition gives $0\leq l\leq \widehat{u}\leq U\leq M_*$. Also the local cone condition holds for functions $l$ and $U$ with the constant $L_1= L_0+\|D\eta\|_{\infty}<\infty$. 

For $x'\in B_2^{n-1}$, $x_n\in\left(F_--1/2,\widehat{f}(x',Z_*,t)+1/2\right)$, $t\in[-1+\delta,0]$, one has
\begin{align*}
	l=0,\qquad U=M_*.
\end{align*}
For $x'\in\mathcal C=B_4^{n-1}\backslash B_3^{n-1}$, $x_n\in\left(F_-,\widehat{f}(x',Z_*,t)\right)$, $t\in[-1+\delta,0]$, one has
\begin{align*}
	l=U=u.
\end{align*}

Extend $l$ and $U$ in time by
\begin{align*}
	&l(x',x_n,t)=l(x',x_n,-1+\delta),&&\qquad U(x',x_n,t)=U(x',x_n,-1+\delta)\quad\text{for $t<-1+\delta$},\\
	&l(x',x_n,t)=l(x',x_n,0),&&\qquad U(x',x_n,t)=U(x',x_n,0)\quad\text{for $t>0$}.
\end{align*}
Modifying $l$ and $U$ with a nonnegative smooth space–time convolution kernel, with periodical convolution in $x'$, gives $l^0_j$ and $U^0_j$. They preserves the local cone condition with same constant $L_1$. The uniform convergence gives numbers $\omega_j\downarrow0$ such that
\begin{align*}
	|l^0_{j}-l|+|U^0_j-U|\leq \omega_j.
\end{align*}

Choose numbers $\sigma_j\downarrow 0$ and a smooth nondecreasing function $\beta_j$ satisfying
\begin{align*}
	\beta_j(q)=0\quad\text{for $q\leq0$},\qquad0\leq \beta_j(q)\leq (q)_+,\qquad (q)_+-\sigma_j\leq \beta_j(q).
\end{align*} 
Indeed, we can let $\beta_j=\int_{\R}S_j$, where smooth nondecreasing function $S_j$ satisfies
\begin{align*}
	S_j=0\quad\text{on $\left(-\infty,0\right)$},\qquad S_j=1\quad\text{on $(\sigma_j,\infty)$}.
\end{align*}
After $p_j$ and $d_j$ have been fixed, set
\begin{align*}
	B_j=2\left(\omega_j+d_j+\gamma_j+\sigma_j\right).
\end{align*}
For large $j$, we have $B_j\leq M_*$. 

Define
\begin{align*}
	l_j=\beta_j\left(l_j^0-B_j\right),\qquad U_j=M_*-\beta_j\left(M_*-U^0_j-B_j\right).
\end{align*}
They are smooth function and preserves the local cone condition with same constant $L_1$, since $\beta_j$ is nondecreasing. Moreover, on $\mathbb{T}^{n-1}\times \R\times\R$, we have
\begin{align*}
0\leq l_j\leq U_j\leq M_*,\qquad U_j\geq B_j.
\end{align*}
First inequality is because that for the nontrivial cases, $l_j= l^0_j-B_j\leq l_j^0-\omega_j\leq l$ and $U_j= U^0_j+B_j\geq U^0_j+\omega_j\geq U$. Hence $0\leq l_j\leq l\leq U\leq U_j\leq M_*$. Second inequality is because that for the nontrivial cases, $U_j= U^0_j+B_j\geq B_j$.

On $D_5(t)\times[-1+\delta,0]$, one has 
\begin{align*}
	\label{3.4}
	\tag{3.4}
	0\leq l_j\leq p_j\leq U_j\leq M_*.
\end{align*}
Indeed,
\begin{align*}
	l_j\leq \left(l+\omega_j+B_j\right)_+\leq(u-d_j)_+\leq p_j,
\end{align*}
where $l\leq \widehat{u}=u$ on $D_5(t)\times[-1+\delta,0]$. Similarly,
\begin{align*}
	U_j\geq \min\left\{M_*, U_j^0+B_j\right\}\geq \min\left\{M_*, u+d_j\right\}\geq p_j.
\end{align*}

For $x'\in \mathcal{C}=B^{n-1}_4\backslash B^{n-1}_3$, since $l=U=u$, computing gives
\begin{align*}
	\label{3.5}
	\tag{3.5}
	|l_j-p_j|+|U_j-p_j|\leq 2\left(d_j+\omega_j+B_j+\sigma_j\right)=:\eps_j\to0.
\end{align*}

For $x'\in B_1^{n-1},\ x_n\in\left(F_-,\widehat{f}(x',Z_*,t)\right)$, $t\in[-1+\delta,0]$, recall that $l=0$, $U=M_*$ on a bigger domain, then 
\begin{align*}
	l_j^0=l_j=0,\qquad U_j^0=U_j=M_*.
\end{align*}

After we define the penalty function and introduce the main properties, we begin to prove the smooth approximation property.
\begin{proof}[Proof of Proposition \ref{smooth appro}]
Define penalty term
\begin{align*}
	R_j(x,t,q)=j\beta_j(l_j-q)-j\beta_j(q-U_j).
\end{align*}
Recall that $D(t)=\left\{x'\in\mathbb{T}^{n-1},\ F_-<x_n<\widehat{f}\left(x',Z_*,t\right)\right\}$. Consider the following penalized problem for fixed $j$,
\begin{equation}
	\notag
	\left\{
	\begin{aligned}
		&(u_j)_t=\Delta(u_j^m)+R_j(x,t,u_j), &&(x,t)\in D(t)\times [-1+\delta,0],\\
		&u_j(x,-1+\delta)=g_j(x), &&x\in D(-1+\delta),\\
		&u_j(x,t)=\gamma_j, &&(x,t)\in \mathbb{T}^{n-1}\times\left\{x_n=F_-\right\}\times[-1+\delta,0],\\
		&u_j(x,t)=M_*, &&(x,t)\in \mathbb{T}^{n-1}\times\left\{x_n=\widehat{f}\left(x',Z_*,t\right)\right\}\times[-1+\delta,0].
	\end{aligned}
	\right.
\end{equation}
When $u_j=\gamma_j$, since $U_j\geq B_j\geq\gamma_j$, we have $R_j\geq 0$. When $u_j=M_*$, since $l_j\leq M_*$, we have $R_j\leq0$. Consequently the constants $\gamma_j$ and $M_*$ are a subsolution and a supersolution, respectively. Since $R_j$ is smooth, nonincreasing with respect to $u_j$, the classic parabolic theory gives a unique smooth solution $u_j$ satisfying
\begin{align*}
	\gamma_j\leq u_j\leq M_*.
\end{align*}

For fixed $u_j$, it is easy to check the function $R_j(x,t,u_j)$ preserves the local cone condition with constant $L_1$. Then, for any $e$ satisfying $e_n\geq L_1|e_{x'}|$, compare $u_j(x+\eps e,t)$ and $u_j(x,t)$ on the corresponding domain 
\begin{align*}
	D(t)\cap \left\{x'\in\mathbb{T}^{n-1},\ F_-<x_n+\eps e_n <\widehat{f}\left(x'+\eps e_{x'},Z_*,t\right)\right\}.
\end{align*}
Since $\left|D_{x'}\widehat{f}\right|\leq L_0\leq L_1$, we have 
\begin{align*}
	\widehat{f}\left(x'+\eps e_{x'},Z_*,t\right)-\eps e_n\leq \widehat{f}\left(x',Z_*,t\right).
\end{align*}
Therefore, above domain is $\left\{x'\in\mathbb{T}^{n-1},\ F_-<x_n <\widehat{f}\left(x'+\eps e_{x'},Z_*,t\right)-\eps e_n \right\}$. Notice that for this domain, we don't have the lateral boundary since $x'\in \mathbb{T}^{n-1}$. For the initial, top and bottom data, the order is trivial. Therefore, $u_j(x+\eps e)\geq u_j(x,t)$. Letting $\eps\to0$ we obtain the local cone condition holds for $u_j$ on $D(t)\times[-1+\delta,0]$ with same constant $L_1$, which is 
\begin{align*}
	D_{x_n} u_j\geq0,\qquad \left|D_{x'}u_j\right|\leq L_1 D_{x_n}u_j\quad\text{on $D(t)\times[-1+\delta,0]$}.
\end{align*}

Set $w=D_{x_n}u_j$ and the equation of $w$ is
\begin{align*}
	w_t=&mu_j^{m-1}\Delta w
	+2m(m-1)u_j^{m-2}Du_j\cdot Dw\\
	&+\left[m(m-1)u_j^{m-2}\Delta u_j
	+m(m-1)(m-2)u_j^{m-3}|Du_j|^2\right]w\\
	&-j\bigl[D\beta_j(l_j-u_j)+D\beta_j(u_j-U_j)\bigr]w\\
	&+jD\beta_j(l_j-u_j)D_{x_n}l_j
	+jD\beta_j(u_j-U_j)D_{x_n}U_j.
\end{align*}
The last two terms are nonnegative. Since $u_j$ is smooth and positive, the strong maximum principle for $w$ gives $w>0$ in the interior. Otherwise, we have $w\equiv0$, which implies $M=\gamma_j$, a contradiction. Since $D_{x_n}u_j>0$, the implicit function theorem supplies smooth function $f_j$.

For $x\in B_1^{n-1}$, $F_-\leq x_n\leq f(x',Z_*,t)$, $t\in[-1+\delta,0]$, since $l_j=0$, $U_j=M_*$, we obtain $R_j(x,t,u_j)=0$. Then the equation of $u_j$ turns into
\begin{align*}
	\left(u_j\right)_t=\Delta\left(u_j^m\right).
\end{align*}

It remains to show that $u_j\to u$ on above domain. First we show that $u_j\to p_j$ on $D_5(t)\times[-1+\delta,0]$. Set
\begin{align*}
	w_j=u_j-p_j,\qquad W_j=u_j^m-p_j^m.
\end{align*}
Subtracting their equations gives
\begin{align*}
 \left(w_j\right)_t=\Delta W_j+R_j(x,t,u_j).
\end{align*}
At the differentiable point
\begin{align*}
	\left(|w_j|\right)_t=\operatorname{sgn}\left(w_j\right)\left(w_j\right)_t=\operatorname{sgn}\left(w_j\right)\left(\Delta W_j+R_j(x,t,u_j)\right).
\end{align*}
Using the distribution inequality $\Delta|W_j|\geq \operatorname{sgn}(W_j)\Delta W_j=\operatorname{sgn}(w_j)\Delta W_j$ gives
\begin{align*}
	\label{3.6}
	\tag{3.6}
\left(|w_j|\right)_t-\Delta |W_j| \leq \operatorname{sgn}(w_j)R_j(x,t,u_j)\quad\text{in the distributional sense.}
\end{align*}
For $w_j=u_j-p_j>0$, if $u_j>U_j$, since \eqref{3.4} gives $l_j\leq p_j<u_j$, we have $R_j(x,t,u_j)< 0$. On the other hand, if $l_j\leq p_j<u_j\leq U_j$, then $R_j(x,t,u_j)=0$. For $w_j=u_j-p_j<0$, if $u_j<l_j$, since $u_j<p_j\leq U_j$, we have $R_j(x,t,u_j)>0$. On the other hand, if $l_j\leq u_j<p_j\leq U_j$, then $R_j(x,t,u_j)=0$. Consequently,
\begin{align*}
	\operatorname{sgn}(w_j)R_j(x,t,u_j)\leq 0.
\end{align*}

For $x'\in \mathcal C=B^{n-1}_4\backslash B^{n-1}_3$, \eqref{3.5} improves above inequality to
\begin{align*}
	\label{3.7}
	\tag{3.7}
		\operatorname{sgn}(w_j)R_j(x,t,u_j)\leq -j\left(|w_j|-\eps_j-\sigma_j\right)_+.
\end{align*}
In fact, for $w_j>0$, since \eqref{3.5} gives $U_j\leq p_j+\eps_j$, we have $u_j-U_j\geq u_j-\left(p_j+\eps_j\right)=w_j-\eps_j$. Then
\begin{align*}
	\beta_j(u_j-U_j)\geq\beta_j(w_j-\eps_j)\geq \left(w_j-\eps_j-\sigma_j\right)_+.
\end{align*}
On the other hand, since $u_j>p_j\geq l_j$, we have $\beta_j(l_j-u_j)=0$. Therefore
\begin{align*}
	\operatorname{sgn}(w_j)R_j(x,t,u_j)\leq -j\left(|w_j|-\eps_j-\sigma_j\right)_+.
\end{align*}
For $w_j<0$, similarly
\begin{align*}
	&\beta_j(l_j-u_j)\geq \beta_j(|w_j|-\eps_j)\geq \left(|w_j|-\eps_j-\sigma_j\right)_+,\\
	&\beta_j(u_j-U_j)=0.
\end{align*}
Therefore we obtain \eqref{3.7}.

Choose a nonnegative smooth cutoff $\xi(x')$ satisfying
\begin{align*}
	\xi=1\quad\text{on $B_4^{n-1}$},\qquad \operatorname{supp}\xi\Subset B_5^{n-1}.
\end{align*}
Multiplying \eqref{3.6} by $\xi$ and integrating on $D_5(t)\times[-1+\delta,0]$ gives
\begin{align*}
	\int_{-1+\delta}^0\int_{D_5(t)} \xi\left(|w_j|\right)_t -\xi\operatorname{sgn}(w_j)R_j(x,t,u_j)dxdt\leq \int_{-1+\delta}^0\int_{D_5(t)}\xi\Delta|W_j|dxdt.
\end{align*}
For the right term, integration by parts gives
\begin{align*}
	\int_{D_5(t)} \xi\Delta|W_j|=\int_{D_5(t)} |W_j|\Delta\xi+\int_{\partial D_5(t)}\left(\xi D_{\nu}|W_j|-|W_j|D_{\nu}\xi\right),
\end{align*}
where $\nu$ is the outer normal vector. For $x'\in B_5^{n-1}$, $\xi=D_{\nu}\xi=0$, hence we only need to consider the top and bottom boundary. Since $u_j=p_j$ on the top and bottom boundary, $W_j=0$. Also, $D_{\nu}|W_j|=-|D_{\nu}W_j|\leq 0$ on the top and bottom boundary. Therefore
\begin{align*}
	\int_{D_5(t)} \xi\Delta|W_j|\leq \int_{D_5(t)} |W_j|\Delta\xi.
\end{align*}
For the first term, we know
\begin{align*}
	\frac{d}{dt}\int_{D_5(t)}\xi|w_j|=\int_{D_5(t)}\xi\left(|w_j|\right)_t+\int_{\partial D_5(t)}\xi|w_j|\left(\nu_B\cdot\nu\right),
\end{align*}
where $\nu_B$ is the velocity vector of $\partial D_5(t)$. Since $w_j=0$ on the top and bottom boundary, and $\xi=0$ for $x'\in B_5^{n-1}$
\begin{align*}
	\int_{-1+\delta}^0\int_{D_5(t)} \xi\left(|w_j|\right)_t=\int_{D_5(t)} \xi|w_j|(x,0)-\int_{D_5(t)} \xi|w_j|(x,-1+\delta)=\int_{D_5(t)} \xi|w_j|(x,0)\geq0.
\end{align*}
Therefore
\begin{align*}
	\int_{-1+\delta}^0\int_{D_5(t)} -\xi\operatorname{sgn}(w_j)R_j(x,t,u_j)\leq \int_{-1+\delta}^0\int_{D_5(t)}|W_j|\Delta \xi \leq C_0,
\end{align*}
where the constant $C_0>0$ is independent of $j$, because $|W_j|\leq 2M_*^m$. Moreover, since $-\operatorname{sgn}(w_j)R_j(x,t,u_j)\geq 0$, restricting $x'\in \mathcal C$ and \eqref{3.7} gives
\begin{align*}
	\int_{-1+\delta}^0\int_{\mathcal C\times(F_-,f(x',Z_*,t))} j\left(|w_j|-\eps_j-\sigma_j\right)_+\leq C_0.
\end{align*}
Consequently
\begin{align*}
	\label{3.8}
	\tag{3.8}
	\int_{-1+\delta}^0\int_{\mathcal C\times(F_-,f(x',Z_*,t))} |w_j|\leq C_0\left(j^{-1}+\eps_j+\sigma_j\right),
\end{align*}
for a new constant $C_0>0$.

Choose a nonnegative smooth cutoff $\chi(x')$ satisfying
\begin{align*}
 \chi=1\quad\text{on $B_3^{n-1}$},\qquad\operatorname{supp} \chi \Subset B_4^{n-1},\qquad \operatorname{supp} D\chi \Subset \mathcal C.
\end{align*}
Multiplying $\left(|w_j|\right)_t\leq \Delta|W_j|$ by $\chi$ and integrating on $D_5(t)\times[-1+\delta,s]$ for any $s\in[-1+\delta,0]$ gives
\begin{align*}
	\int_{D_5(s)}\chi|w_j|(x,s)\leq \int_{-1+\delta}^0\int_{D_5(t)}|W_j|\Delta \chi\leq mM_*\int_{-1+\delta}^0\int_{\mathcal C\times(F_-,f(x',Z_*,t))} |w_j||\Delta\chi|.
\end{align*}
Using \eqref{3.8} gives
\begin{align*}
	\label{3.9}
	\tag{3.9}
	\sup_{-1+\delta\leq s\leq 0}\int_{D_5(s)}\chi|w_j|(x,s)\leq C_0\left(j^{-1}+\eps_j+\sigma_j\right),
\end{align*}
for a new constant $C_0>0$.

Lemma \ref{lem 3.3} and \eqref{3.9} give $u_j\to u$ in $L^1$ uniformly in $t\in\left[-1+\delta,0\right]$ , for $x'\in B_3^{n-1}$. Moreover, for $x'\in B_1^{n-1}$, equation \eqref{pme u} shows the uniformly bounded $u_j$ is locally H\"older continuous with uniform H\"older norm. Since $u$ is also H\"older continuous, we can improved the convergence in $L^1$ into the uniform convergence for $x'\in B_*^{n-1}$. On the compact of $\ome$, the interior parabolic estimates give the convergences with all spatial and time derivatives. For fixed $0<Z_0<Z_*$, the uniform bounds of all spatial and time derivatives of $f_j(x',Z_0,t)$ follow from above convergences.

\end{proof}



\section{Upper bound gradient estimates}\label{s4}
In this section, we establish the upper gradient estimate for smooth solution. First, we prove a central estimate of level set function $f$. Choose
\begin{align*}
	D_1(t)\Subset\left\{x'\in B_*^{n-1},\ x_n\in\left(F_-,f(x',Z_*,t)\right)\right\},\qquad \mathcal{D}=D_1(t)\times[-1+2\delta,0].
\end{align*}
where $B_{*}^{n-1}$, $F_-$, $Z_*$ and $\delta$ is given in Proposition \ref{smooth appro}.
\begin{theorem}\label{thm up grad}
	Let $v$ be a smooth positive solution of \eqref{pme v} on $\mathcal{D}$ with
	\begin{align*}
		v\in[\gamma,Z_*],\qquad D_{x_n}v>0,\qquad |D_{x'}v|\leq L D_{x_n}v,\qquad \Delta v\geq -K_\delta,
	\end{align*}
	for an arbitrarily small constant $\gamma>0$, a constant $L\in[0,\infty)$ and the constant $K_{\delta}$ is given in Proposition \ref{local AB prop}. Suppose that the level set function $f(x',z,t)$ satisfies
	\begin{align*}
		F_-\leq f\leq F_+\quad\text{on $\mathcal{D}$},
	\end{align*}
	for a constant $F_+>F_-$. Let $\chi=\chi(x')\in[0,1]$ be a smooth cut-off function satisfying
	\begin{align*}
		\operatorname{supp}\chi\Subset \pi_{x'}\mathcal D,\qquad |D\chi|^2\leq C_{\chi} \chi,\qquad|D^2\chi|\leq C_{\chi}\quad\text{for a constant $C_{\chi}>0$.}
	\end{align*}
	 Let $\vartheta=\vartheta(x_n)\in\left[0,1\right]$ be a smooth nondecreasing cut-off function satisfying
	\begin{align*}
		&\operatorname{supp}\vartheta \Subset \left(F_-,F_+\right],\qquad\vartheta(F_-+\eps_0)=1\quad\text{for a fixed small $\eps_0$},\\
		&|D\vartheta|^2\leq C_{\vartheta}\vartheta,\qquad|D^2\vartheta|\leq C_{\vartheta}\quad\text{for a constant $C_{\vartheta}>0$.}
	\end{align*}

	Then there exists constants $Z_1\in(0,Z_*)$ and $C_1>0$, depending only on $n,m,L,F_+-F_-,K_{\delta},C_{\chi},C_{\vartheta}$ and independent of $\gamma$, such that for any $\gamma<Z\leq Z_1$, if at $z=Z$, $\chi\vartheta>0$, $-1+2\delta<t\leq0$, there exists a constant $H_1$ such that
	\begin{align*}
		\frac{-f_t}{H}\leq H_1,
	\end{align*}
	where
	\begin{align*}
		\label{def H}
		\tag{4.1}
		H=1+F_+-f+\frac{L^2-|D_{x'}f|^2}{2}+z\sqrt{f_z}\left(1+9L^2-|D_{x'}f|^2\right),
	\end{align*}
	then for $\gamma<z\leq Z$, $\chi=\vartheta=1$, $-1+2\delta<t\leq 0$, we have
	\begin{align*}
		\frac{-f_t}{H}\leq C_1\left(1+H_1+(t+1-2\delta)^{-3/5}\right).
	\end{align*}
\end{theorem}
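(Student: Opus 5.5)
We will run a maximum principle argument for the quotient $-f_t/H$, localized with the cut-offs and a time weight, in the level-set (hodograph) variables $(x',z,t)$ on the region $\gamma<z\le Z$.

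\textbf{Step 1: the equation for $f$.} From $v(x',f(x',z,t),t)=z$ and \eqref{pme v} we obtain a degenerate quasilinear equation for $f$. Writing $p=D_z f=1/D_{x_n}v$ and $w=D_{x'}f$, one has $D_{x'}v=-w/p$ and $|\nabla v|^2=(1+|w|^2)/p^2$. This gives
\begin{align*}
f_t=(m-1)z\Big(\Delta_{x'}f-\frac{2w\cdot D_{x'}f_z}{f_z}+\frac{(1+|w|^2)f_{zz}}{f_z^2}\Big)-\frac{1+|w|^2}{f_z},
\end{align*}
where the leading operator is degenerate in $z$ with weight $z$. Differentiating in $t$ gives a linear equation for $\phi:=-f_t$, and differentiating in $x'$ and $z$ gives the equations satisfied by $w$ and $f_z$. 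We then compute $\mathcal{L}H$ for the linearized operator $\mathcal{L}$. The quadratic term $-|D_{x'}f|^2/2$ is designed to produce a good term of order $z|D^2_{x'}f|^2$ and the analogous mixed-derivative terms, since $|w|\le L$ keeps $H$ positive. The term $z\sqrt{f_z}(\cdots)$ is designed to produce a positive contribution from the transport term $-(1+|w|^2)/f_z$, and also from the boundary behaviour at small $z$. The aim is $\mathcal{L}H\ge c\,(\text{good terms})-C$ once $Z_1$ is small, with the restriction $Z\le Z_1$ used to absorb the $z$-weighted error terms.

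\textbf{Step 2: the test function.} Set
\begin{align*}
Q=\chi\vartheta\,\tau(t)\,\frac{\phi}{H},\qquad \tau(t)=(t+1-2\delta)^{3/5}.
\end{align*}
The exponent $3/5$ should emerge when we balance the $\tau'$ term against the superlinear good term in $Q$. We take the maximum of $Q$ over $\{\gamma<z\le Z\}\times$ the cut-off support. We treat the boundary pieces as follows.
\begin{itemize}
\item At $z=Z$ we use the hypothesis $\phi/H\le H_1$.
\item At $z=\gamma$, i.e. the boundary of the smooth approximation where $v=\gamma$, the degeneracy of the weight $z$ together with the monotonicity $D_z$ of the quotient should make the maximum impossible there. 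If it is not, we argue via the linear-in-$z$ structure of the last term of $H$.
\item Where $\chi\vartheta=0$ or $t=-1+2\delta$, $Q$ vanishes.
\end{itemize}

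\textbf{Step 3: the interior maximum.} At an interior maximum we use $D Q=0$ and $\mathcal{L}Q\ge0$. We express $\mathcal{L}\phi$ from the differentiated equation; its lower-order coefficient involves $\Delta v$, which is bounded below by $-K_\delta$, and this is where Proposition \ref{local AB prop} enters. The aim is an inequality of the form
\begin{align*}
\frac{\phi^2}{H^2}\Big(\text{positive from }\mathcal LH\Big)\le C\Big(\frac{\phi}{H}\Big)\Big(1+\frac{|D\chi|^2}{\chi}+\frac{|D\vartheta|^2}{\vartheta}+\frac{\tau'}{\tau}\Big).
\end{align*}
The cut-off gradients are handled by $|D\chi|^2\le C_\chi\chi$ and $|D\vartheta|^2\le C_\vartheta\vartheta$ together with Cauchy--Schwarz against the good terms. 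Here we use the identity $f_t=-v_t/D_{x_n}v$ to relate $\phi$ to $f_z|\nabla v|^2$-type quantities. We also use the structure $\phi\approx (1+|w|^2)/f_z$ near $z=0$, which is comparable to $H/(z^2 f_z^{3/2})$-scale terms, to find a genuinely superlinear (quadratic in $\phi/H$) good term. Solving the resulting inequality then gives $Q\le C(1+H_1)+C$, and hence $\phi/H\le C_1(1+H_1+\tau^{-1})$ where $\chi=\vartheta=1$.

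\textbf{Main obstacle.} The hard step is the computation of $\mathcal{L}H$ and the verification that the cross terms produced by differentiating $\phi/H$ are controlled. In particular, the mixed terms $z\,w\cdot D_{x'}f_z/f_z$ and the $f_{zz}$ terms must be absorbed by the good terms coming from $-|w|^2/2$ and from $z\sqrt{f_z}(1+9L^2-|w|^2)$. The constant $9L^2$ and the power $\sqrt{f_z}$ suggest this is a delicate Cauchy--Schwarz balance. My first attempt would be to work at the maximum point in rotated $x'$-coordinates aligned with $w$, and to use the first-order condition $D(\phi/H)=0$ to replace $D\phi$ by $(\phi/H)DH$ before estimating.
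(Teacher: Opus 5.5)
\textbf{There is a genuine gap: your maximum-point argument does not close, because your test function lacks the factor $e^z$ that the paper's proof depends on.} Your overall framework matches the paper: a maximum principle for $-f_t/H$ in the level-set variables, cut-offs $\chi,\vartheta$, a time weight, $Tf_t=0$, and the good term $(m-1)zE$ coming from $-\tfrac12|D_{x'}f|^2$.

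\textbf{Why the target inequality fails.} The bound ``$\mathcal LH\ge c\,(\text{good terms})-C$'' with $C$ bounded is not available. The first-order (transport) part of $T(zG)$, with $G=\sqrt{f_z}(1+9L^2-|D_{x'}f|^2)$, does not give a positive contribution. It gives the \emph{bad} term $-(1+|D_{x'}f|^2)(1+9L^2-|D_{x'}f|^2)f_z^{-3/2}$. So the error in $TH/H$ is $-C\bigl((m-1)z+f_z^{-3/2}\bigr)/H$, which is unbounded exactly where $|\nabla v|$ is large ($f_z\to0$). The only superlinear good term that $H$ supplies comes from $z\mathcal G\gtrsim z^2\sqrt{f_z}(E+Q)$, combined with $f_t+F=(m-1)zS$ and $|S|\le\sqrt{n+L^2}\sqrt{E+Q}$. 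This gives
\[
T(zG)\ge \tfrac{(m-1)c_1}{2}z^2\sqrt{f_z}(E+Q)+c_T\sqrt{f_z}\,f_t^2-Cf_z^{-3/2}.
\]
The resulting term $c_*\sqrt{f_z}f_t^2/H$ degenerates as $f_z\to0$. In the Darcy regime near $\Gamma$, where $-f_t\approx F\approx f_z^{-1}$, it is of the same order as the error $Cf_z^{-3/2}/H$, but with a small constant in front. With your test function $\chi\vartheta\tau\,\phi/H$, the maximum-point inequality therefore essentially only yields $-f_t\lesssim f_z^{-1}=D_{x_n}v$. That is circular, since $D_{x_n}v$ is the quantity you are trying to bound.

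\textbf{The missing ingredient.} The paper uses $\mathcal P=\tau e^z\chi\vartheta(f)\,(-f_t/H)$. One has $Tz=\frac{F+2f_t}{f_z}-2(m-1)z\frac{\operatorname{tr}M}{f_z}$. The hypothesis $\Delta v\ge-K_\delta$ gives $v_t\ge|\nabla v|^2-(m-1)K_\delta v$, i.e.\ $f_z^{-1}\le F\le -f_t+C_F\sqrt z$. Hence $\mathcal Lz$ contributes $f_tf_z^{-1}+C_F\sqrt z\,f_z^{-1}$ plus absorbable terms.
\begin{itemize}
\item At a positive maximum $f_t<0$, so $f_tf_z^{-1}$ is a good term. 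When $f_z^{-1}\ge 2C_F$ it has size at least $\tfrac12 f_z^{-2}$.
\item This absorbs $Cf_z^{-3/2}$ and $C_F\sqrt z\,f_z^{-1}$, and, after choosing $Z_1$ small, also $C(m-1)zf_z^{-2}$.
\item This is where $\Delta v\ge-K_\delta$ actually enters. In these variables $Tf_t=0$ exactly, so there is no $\Delta v$ coefficient in the equation for $\phi$, contrary to what you suggest.
\end{itemize}
The two good terms are then combined by the interpolation
\[
\frac{\sqrt{f_z}}{H}f_t^2-f_tf_z^{-1}\ge c\,(-f_t)^{5/3}\qquad(-f_t\ge1).
\]
It is proved by splitting into the cases $f_z^{-1}\le1$, $1\le f_z^{-1}\le(-f_t)^{2/3}$ and $f_z^{-1}\ge(-f_t)^{2/3}$, using $1+z\sqrt{f_z}\le H\le C_H(1+z\sqrt{f_z})$. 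This yields $(-f_t)^{5/3}\le C(1+\chi^{-5/4}+\vartheta^{-1}+\tau^{-1})$ with a linear time weight. That is the true source of the exponent $3/5$: it reflects the degenerate coefficient $\sqrt{f_z}$, not a balance of $\tau'$ against a nondegenerate quadratic term.

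\textbf{Minor point.} The boundary $z=\gamma$ needs no separate ``degeneracy/monotonicity'' argument. In the setting of Proposition~\ref{smooth appro}, the level set $\{v=\gamma\}$ is the plane $x_n=F_-$, where $\vartheta(f)=0$.
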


We demonstrate the main steps first. Let $\tau=t+1-\delta$ and consider function
\begin{align*}
	\mathcal{P}=\tau e^z\chi(x')\vartheta(f)\frac{-f_t}{H}.
\end{align*}
We will use the maximum argument for $\log \mathcal{P}$ on $\mathcal{D}$ with respect to a linearization operator $\mathcal{L}$. By our assumption, $\log \mathcal P$ is controlled on $\partial \mathcal D$. Therefore, we only consider the interior maximum point. Then, using the equation of $f$ and the local cone condition we prove that at the interior maximum point, $\log \mathcal P$ can be controlled. Before we begin to prove, we make necessary computations to define desired linearization operator $\mathcal{L}$.

Implicit differentiation gives
\begin{align*}
	&D_{x_n}v=\frac{1}{f_z},\qquad D_{x'}v=-\frac{D_{x'}f}{f_z},\qquad v_t=-\frac{f_t}{f_z},\\
	&\Delta v=-\frac{\Delta_{x'}f}{f_z}+\frac{2D_{x'}f\cdot D_{x'}f_z}{f_z^2} -\frac{1+|D_{x'}f|^2}{f_z^3}f_{zz}.
\end{align*}
Set 
\begin{align*}
	F=\frac{1+|D_{x'}f|^2}{f_z}
\end{align*}
then
\begin{align*}
	F_z=\frac{2D_{x'}f \cdot D_{x'}f_z}{f_z}-\frac{1+|D_{x'}f|^2}{f_z^2}f_{zz}.
\end{align*}
Hence
\begin{align*}
	\Delta v=-\frac{\Delta_{x'}f-F_z}{f_z}.
\end{align*}
Substitution into equation \eqref{pme v} gives
\begin{align*}
	\label{pme f}
	\tag{4.2}
	f_t=(m-1)z\left(\Delta_{x'}f-F_z\right)-F.
\end{align*}
The divergence form is 
\begin{align*}
	f_t=(m-1)z^{\frac{m-2}{m-1}} \operatorname{div}\left(z^{\frac{1}{m-1}}\left(D_{x'}f,-F\right)\right)
\end{align*}

Define the linearization operator $T$ as
\begin{align*}
	Tf=f_t-(m-1)z^{\frac{m-2}{m-1}} \operatorname{div}\left(z^{\frac{1}{m-1}}\mathcal{A}Df\right),
\end{align*}
where
\begin{align*}
	\mathcal A=
	\begin{pmatrix} 
		I_{n-1}&0\\
		\left(\frac{-2D_{x'}f}{f_z}\right)^T&\frac{1+|D_{x'}f|^2}{f_z^2}
	\end{pmatrix}
\end{align*}
is the derivative of $\left(D_{x'}f,-F\right)$ with respect to $Df=(D_{x'}f,f_z)$. . Its quadratic form is
\begin{align*}
	\left(\xi,\eta\right)\mathcal{A}(\xi,\eta)=\left|\xi-f_z^{-1}D_{x'}f\cdot\eta\right|^2+f_z^{-2}\eta^2>0\quad\text{for $(\xi,\eta)\neq0$}.
\end{align*}
Therefore, $\mathcal A$ is positive definite. Differentiation \eqref{pme f} with respect to $x_i$, $z$ and $t$ gives
\begin{align*}
	\label{TDf}
	\tag{4.3}
	TD_{x_i}f=0,\qquad Tf_t=0,\qquad Tf_z=(m-1)\left(\Delta_{x'}f-F_z\right)=\frac{F+f_t}{z}.
\end{align*}

For scalar functions $g$ and $h$, denote
\begin{align*}
	\label{def lam}
	\tag{4.4}
	\Lambda\left(g,h\right)=(m-1)z\left[\left(D_{x'}g-\frac{D_{x'}f}{f_z}g_z\right)\cdot\left(D_{x'}h-\frac{D_{x'}f}{f_z}h_z\right)+\frac{g_zh_z}{f_z^2}\right].
\end{align*}
Then the chain rules gives for smooth function $\Phi(g_1,\cdots,g_i)$
\begin{align*}
	\label{product}
	\tag{4.5}
	\begin{split}
		T(gh)&=gTh+hTg-2\Lambda(g,h),\\
		T\Phi(g_1,\cdots,g_i)&=\sum_{j=1}^{i} D_{g_j}\Phi Tg_j-\sum_{j,k=1}^{i}D^2_{g_jg_k}\Phi \Lambda(g_j,g_k).
	\end{split}
\end{align*}

Denote for $i,j=1,\cdots,n-1$
\begin{align*}
	M_{ij}&=D^2_{x_ix_j}f-\frac{D_{x_i}f_z D_{x_j}f}{f_z},\qquad E=|M|^2+\left|\frac{D_{x'}f_z}{f_z}\right|^2,\qquad \phi=\frac{D_{x'}f_z}{f_z}-\frac{D_{x'}f }{f_z^2}f_{zz},\\
	Q&=|\phi|^2+\left|\frac{f_{zz}}{f_z^2}\right|^2,\qquad S=\operatorname{tr} M-\frac{D_{x'}f\cdot D_{x'}f_z}{f_z}+\frac{1+|D_{x'}f|^2}{f_z^2}f_{zz}.
\end{align*}
Then the definition of $S$ and equation \eqref{pme f} gives
\begin{align*}
	\label{use S}
	\tag{4.6}
	f_t+F=(m-1)zS.
\end{align*}
Combining with the notations and \eqref{use S}, the computations give
\begin{align*}
	\label{computation}
	\tag{4.7}
	\begin{split}
		&Tf=2(m-1)z\frac{f_{zz}}{f_z^2}-\frac{2}{f_z},\\ &Tz=\frac{F+2f_t}{f_z}-2(m-1)z\frac{\operatorname{tr} M}{f_z},\\
		&\sum_{i=1}^{n-1}\Lambda(D_{x_i}f,D_{x_i}f)=(m-1)zE,\\
		&\Lambda(f_z,f_z)=(m-1)zf_z^2Q,\\
		&\Lambda(D_{x_i}f,f_z)=(m-1)zf_z\left[\left(M\phi\right)_i+\frac{D_{x_i}f_z}{f_z^3}f_{zz}\right],\\
		&\Lambda(z,D_{x_i}f)=(m-1)z\frac{D_{x_i}f_z/f_z-\left(MD_{x'}f\right)_i}{f_z},\\
		&\Lambda(z,f_z)=(m-1)z\left(\frac{f_{zz}}{f_z^2}-D_{x'}f\cdot\phi\right).
	\end{split}
\end{align*}

Now we start to estimate $TH$. From \eqref{def H}, we know
\begin{align*}
	\label{bdd H}
	\tag{4.8}
	1+z\sqrt{f_z}\leq H\leq C_H\left(1+z\sqrt{f_z}\right),
\end{align*}
where
\begin{align*}
	C_H=\max\left\{1+F_+-F_-+L^2/2, 1+9L^2\right\}.
\end{align*}
Define functions $A=A(D_{x'}f)$ and $G=G(D_{x'}f,f_z)$ as 
\begin{align*}
A(D_{x'}f)=1+9L^2-|D_{x'}f|^2,\qquad G(D_{x'}f,f_z)=\sqrt{f_z}A.
\end{align*}
By \eqref{TDf}, \eqref{product} and \eqref{computation}
\begin{align*}
	TG=\frac{A}{2\sqrt{f_z}}\frac{F+f_t}{z}+\mathcal{G},
\end{align*}
where
\begin{align*}
	\mathcal{G}=(m-1)z\sqrt{f_z}\left[2E+2D_{x'}f\cdot\left(M\phi+\frac{D_{x'}f_z}{f_z^3}f_{zz}\right)+\frac{A}{4}Q\right].
\end{align*}
Since $|D_{x'}f|=|D_{x'}v|/D_{x_n}v\leq L$,
\begin{align*}
	&2E+2D_{x'}f\cdot\left(M\phi+\frac{D_{x'}f_z}{f_z^3}f_{zz}\right)+\frac{A}{4}Q\\
	\geq&2E-\left(|M|^2+L^2|\phi|^2\right)-\left(\left|\frac{D_{x'}f_z}{f_z}\right|^2+L^2\left|\frac{f_{zz}}{f_z^2}\right|^2\right)+\frac{1+8L^2}{4}Q\\
	=&E+\left(\frac14+L^2\right)Q\geq c_1\left(E+Q\right),
\end{align*}
where $c_1=\min\left\{1, 1/4+L^2\right\}>0$. Hence
\begin{align*}
	\mathcal{G}\geq (m-1)c_1z\sqrt{f_z}\left(E+Q\right).
\end{align*}
Also,
\begin{align*}
	2\Lambda(z,G)=(m-1)zf_z^{-\frac12}\left[4D_{x'}f\cdot \left(MD_{x'}f-\frac{D_{x'}f_z}{f_z}\right)+A\left(\frac{f_{zz}}{f_z}-D_{x'}f\cdot\phi\right)\right]
\end{align*}
By above identity, \eqref{product}, \eqref{use S} and \eqref{computation}
\begin{align*}
	T\left(zG\right)=-\frac{1+\left|D_{x'}f\right|^2}{f_z^{3/2}}A+z\mathcal{G}+(m-1)zf_z^{-\frac12}\mathcal{R},
\end{align*}
where
\begin{align*}
	\mathcal{R}=\frac{A}{2}\operatorname{tr}M+\frac{3A}{2}\left(\frac{f_{zz}}{f_z^2}-D_{x'}f\cdot\phi\right)+4D_{x'}f\cdot \phi+\frac{4\left|D_{x'}f\right|^2}{f_z^2}f_{zz}-4D_{x'}f\cdot MD_{x'}f.
\end{align*}
Since $\left|\operatorname{tr}M\right|\leq \sqrt{n-1}|M|$ and $\dis \left|\frac{f_{zz}}{f_z^2}-D_{x'}f\cdot\phi\right|\leq \sqrt{1+\left|D_{x'}f\right|^2}\sqrt{Q}\leq \sqrt{1+L^2}\sqrt{Q}$,
\begin{align*}
	|\mathcal{R}|&\leq \frac{1+9L^2}{2}\sqrt{n-1}|M|+\frac{3(1+9L^2)}{2}\sqrt{1+L^2}\sqrt{Q}+4L|\phi|+4L^2\left|\frac{f_{zz}}{f_z^2}\right|+4L^2|M|\\
	&=\left(\frac{1+9L^2}{2}\sqrt{n-1}+4L^2\right)|M|+\left(\frac{3(1+9L^2)}{2}\sqrt{1+L^2}+4L+4L^2\right)\sqrt{Q}\\
	&\leq \left(\frac{1+9L^2}{2}\sqrt{n-1}+4L^2\right)\sqrt{E}+\left(\frac{3(1+9L^2)}{2}\sqrt{1+L^2}+4L+4L^2\right)\sqrt{Q}\\
	&\leq C_{\mathcal R}\sqrt{E+Q},
\end{align*}
where
\begin{align*}
	C_{\mathcal{R}}=\sqrt{\left(\frac{1+9L^2}{2}\sqrt{n-1}+4L^2\right)^2+\left(\frac{3(1+9L^2)}{2}\sqrt{1+L^2}+4L+4L^2\right)^2}.
\end{align*}
Young's inequality gives
\begin{align*}
	(m-1)C_{\mathcal{R}}zf_z^{-\frac12}\sqrt{E+Q}\leq \frac{(m-1)c_1}{4}z^2f_z^{\frac{1}{2}}(E+Q)+\frac{(m-1)C_{\mathcal{R}}^2}{c_1}f_z^{-\frac32}.
\end{align*}
Consequently,
\begin{align*}
	\label{bdd TzG1}
	\tag{4.9}
	T\left(zG\right)\geq -\left[(1+L^2)(1+9L^2)+\frac{(m-1)C_{\mathcal{R}}^2}{c_1}\right]f_z^{-\frac32}+\frac{3(m-1)c_1}{4}z^2f_z^{\frac12}(E+Q).
\end{align*}

On the other hand, \eqref{use S} gives
\begin{align*}
	f_t^2=\left[F-(m-1)zS\right]^2\leq 2F^2+2(m-1)^2z^2S^2.
\end{align*}
Since $\dis S=\operatorname{tr} M+\frac{f_{zz}}{f_z^2}-D_{x'}f\cdot\phi$,
\begin{align*}
	|S|&\leq \left|\operatorname{tr}M\right|+\left|\frac{f_{zz}}{f_z^2}-D_{x'}f\cdot\phi\right|\leq \sqrt{n-1}\sqrt{E}+\sqrt{1+L^2}\sqrt{Q}\\
	&\leq\sqrt{n+L^2}\sqrt{E+Q}.
\end{align*}
Recall that $\dis F=\frac{1+|D_{x'}f|^2}{f_z}$, then
\begin{align*}
	f_t^2\leq 2\frac{(1+L^2)^2}{f_z^2}+2(m-1)^2z^2(n+L^2)(E+Q).
\end{align*}
Multiply above inequality by $c_T f_z^{\frac12}$, where $\dis c_T=\frac{c_1}{8(m-1)(n+L^2)}$
\begin{align*}
	c_T f^{\frac12}_z f_t^2\leq \frac{c_1(1+L^2)^2}{4(m-1)(n+L^2)}f_z^{-\frac32}+\frac{(m-1)c_1}{4}z^2f_z^{\frac12}(E+Q).
\end{align*}
Substitute into \eqref{bdd TzG1}
\begin{align*}
	T\left(zG\right)\geq \frac{(m-1)c_1}{2}z^2f_z^{\frac12}(E+Q)+c_Tf_z^{\frac12}f_t^2-Cf_z^{-\frac32},
\end{align*}
for a constant $C=C(m,n,L)$.

Recall that $\dis H=1+F_+-f+\frac{L^2-|D_{x'}f|^2}{2}+zG$, then \eqref{product} and \eqref{computation} gives
\begin{align*}
	TH=-Tf+(m-1)zE+T\left(zG\right).
\end{align*}	
Using \eqref{use S} we also have
\begin{align*}
	Tf=\frac{2f_t}{1+\left|D_{x'}f\right|^2}-\frac{2(m-1)z}{1+\left|D_{x'}f\right|^2}\left(\operatorname{tr}M-\frac{D_{x'}f\cdot D_{x'}f_z}{f_z}\right).
\end{align*}
Notice that
\begin{align*}
	\label{ineq trM}
	\tag{4.10}
	\left|\operatorname{tr}M-\frac{D_{x'}f\cdot D_{x'}f_z}{f_z}\right|\leq \sqrt{n-1}|M|+L\left|\frac{D_{x'}f_z}{f_z}\right|\leq \sqrt{n-1+L^2}\sqrt{E}.
\end{align*}
Hence
\begin{align*}
	-Tf+(m-1)zE&\geq \frac{-2f_t}{1+|D_{x'}f|^2}+(m-1)z\left(E-2\sqrt{n-1+L^2}\sqrt{E}\right)\\
	&\geq\frac{-2f_t}{1+|D_{x'}f|^2}+(m-1)z\left(\frac{E}{2}-2(n-1+L^2)\right).
\end{align*}
Consequently,
\begin{align*}
	TH\geq& c_E (m-1)z\left(1+zf_z^{\frac12}\right)E+\frac{c_1}{2}(m-1)z^2f_z^{\frac12}Q\\
	&-\frac{2f_t}{1+|D_{x'}f|^2}+c_T f_z^{\frac12}f_t^2-C(m-1)z-Cf_z^{-\frac32},
\end{align*}
for a new constant $C=C(m,n,L)$, where $c_E=1/2\min\left\{1,c_1\right\}$. Set
\begin{align*}
	B=\frac{z\sqrt{f_z}}{H}\in\left(0,1\right).
\end{align*}
The range uses \eqref{bdd H}, which also gives
\begin{align*}
	\label{esti TH}
	\tag{4.11}
	\begin{split}
	\frac{TH}{H}\geq&\frac{c_E}{C_H}(m-1)zE+\frac{c_1}{2}(m-1)zBQ+c_T\sqrt{f_z}\frac{f_t^2}{H}\\
	&-\frac{2}{1+\left|D_{x'}f\right|^2}\frac{f_t}{H}-C\frac{(m-1)z+f_z^{-\frac32}}{H}\\
	\geq&c_*\left[(m-1)zE+(m-1)zBQ+\sqrt{f_z}\frac{f_t^2}{H}\right] \\ &-\frac{2}{1+\left|D_{x'}f\right|^2}\frac{f_t}{H}-C\frac{(m-1)z+f_z^{-\frac32}}{H},
	\end{split}
\end{align*}
where $\dis c_*=\min\left\{\frac{c_E}{C_H},\frac{c_1}{2},c_T\right\}$.

Now we define the linearization operator $\mathcal L$ and obtain the estimates for $\mathcal{L}f$, $\mathcal{L}\log\chi$, $\mathcal{L}z$ and $\Lambda\left(\log\vartheta+\log\chi+z,\log\vartheta+\log\chi+z\right)$. Define
\begin{align*}
	\mathcal Lf=Tf-\frac{2\Lambda\left(H,f\right)}{H},\qquad w=\frac{-f_t}{H}.
\end{align*}
Since $Tf_t=0$, by \eqref{product},
\begin{align*}
	0=T\left(wH\right)=wTH+HTw-2\Lambda(H,w).
\end{align*}
Then
\begin{align*}
	\label{Lw}
	\tag{4.12}
	\mathcal Lw=-w\frac{TH}{H}.
\end{align*}

Differentiation of $H$ gives
\begin{align*}
	D_{x'}H-\frac{D_{x'}f}{f_z}H_z&=-\left(1+2z\sqrt{f_z}\right)M^TD_{x'}f+\frac{zG\phi}{2}-\frac{AD_{x'}f}{\sqrt{f_z}}\\
	\frac{H_z}{f_z}&=-1-\left(1+2z\sqrt{f_z}\right)\frac{D_{x'}f\cdot	D_{x'}f_z}{f_z}+\frac{A}{\sqrt{f_z}}+\frac{zGf_{zz}}{2f_z^2}.
\end{align*}
Then from \eqref{def lam} and computations we obtain for $\chi>0$
\begin{align*}
	\label{computation2}
	\tag{4.13}
	\begin{split}
\Lambda\left(f,f\right)=&(m-1)z\\
	\Lambda\left(f,\log \chi\right)=&0\\
	\Lambda\left(f,z\right)=&(m-1)zf_z^{-1}\\
	\Lambda\left(H,f\right)=&(m-1)z\left[-1-\left(1+2z\sqrt{f_z}\right)\frac{D_{x'}f\cdot	D_{x'}f_z}{f_z}+\frac{A}{\sqrt{f_z}}+\frac{zGf_{zz}}{2f_z^2}\right]\\
	\Lambda\left(H,\log \chi\right)=&(m-1)z D\left(\log \chi\right)\cdot\left[-\left(1+2z\sqrt{f_z}\right)M^TD_{x'}f+\frac{zG\phi}{2}-\frac{AD_{x'}f}{\sqrt{f_z}}\right]\\
	\Lambda(H,z)=&(m-1)zf_z^{-1}\left[-1-\frac{D_{x'}f\cdot D_{x'}f_z}{f_z}+D_{x'}f\cdot MD_{x'}f\right]\\
	&+(m-1)z^2f_z^{-\frac12}\left[\frac{A}{2}\left(\frac{f_{zz}}{f_z^2}-D_{x'}f\cdot\phi\right)-2D_{x'}f\cdot\left(\frac{D_{x'}f_z}{f_z}-MD_{x'}f\right)\right]\\
	&+(m-1)zA\frac{1+\left|D_{x'}f\right|^2}{f_z^{\frac32}}\\
	T\left(\log \chi\right)=&-(m-1)z\Delta \left(\log \chi\right)+\frac{-2f_t}{1+\left|D_{x'}f\right|^2}D_{x'}f\cdot D\left(\log\chi\right) \\
	&+2(m-1)z D\left(\log\chi\right)\cdot\left[\frac{D_{x'}f_z}{f_z}+\frac{\operatorname{tr}M-f_z^{-1}D_{x'}f\cdot D_{x'}f_z }{1+\left|D_{x'}f\right|^2}D_{x'}f\right].
	\end{split}
\end{align*}
Then we start to estimate $\mathcal{L}f$, $\mathcal{L}\log\chi$, $\mathcal{L}z$ and $\Lambda\left(\log\vartheta+\log\chi+z,\log\vartheta+\log\chi+z\right)$ with \eqref{computation} and \eqref{computation2}.  Set
\begin{align*}
	\Theta=\frac{zG}{2H}\in\left(0,\frac12\right).
\end{align*}

First, for $\mathcal{L}f$, if $f_t<0$, using \eqref{ineq trM} we obtain
\begin{align*}
	\mathcal{L}f=&2\frac{1-\Theta}{1+\left|D_{x'}f\right|^2}f_t-2\Theta f_z^{-1}-2(m-1)z\frac{1-\Theta}{1+\left|D_{x'}f\right|^2}\left(\operatorname{tr}M-\frac{D_{x'}f\cdot D_{x'}f_z}{f_z}\right)\\
	&+\frac{2(m-1)z}{H}\left[1+\left(1+2z\sqrt{f_z}\right)\frac{D_{x'}f\cdot D_{x'}f_z}{f_z}-\frac{A}{\sqrt{f_z}}\right]\\
	\leq&C(m-1)z\sqrt{E}+2(m-1)z+4L(m-1)z\sqrt{E}\\
	\leq& C(m-1)z\sqrt{E}+2(m-1)z,
\end{align*}
for a new constant $C=C(m,n,L)$. Multiplying above inequality by $D\left(\log\vartheta\right)>0$ and using Young's inequality
\begin{align*}
	C(m-1)z D\left(\log\vartheta\right)\sqrt{E}\leq \eps (m-1)zE+C_{\eps}(m-1)z\left(D\left(\log\vartheta\right)\right)^2,
\end{align*}
we obtain 
\begin{align*}
	\label{esti Lf}
	\tag{4.14}
	D\left(\log\vartheta\right)\mathcal{L}f\leq\varepsilon(m-1)zE+C(m-1)z\left[\left(D\left(\log\vartheta\right)\right)^2+D\left(\log\vartheta\right)\right],
\end{align*}
for a new constant $C=C(m,n,L,\varepsilon)$, whenever $f_t<0$. In the sequel till the end of this section, $C=C(m,n,L,\eps)$ denotes different constants.

Second, for $\mathcal{L}\log\chi$, use\eqref{ineq trM}
\begin{align*}
	\mathcal{L}\left(\log\chi\right)=&-(m-1)z\Delta \left(\log \chi\right)+\frac{-2f_t}{1+\left|D_{x'}f\right|^2}D_{x'}f\cdot D\left(\log\chi\right) \\
	&+2(m-1)z D\left(\log\chi\right)\cdot\left[\frac{D_{x'}f_z}{f_z}+\frac{\operatorname{tr}M-f_z^{-1}D_{x'}f\cdot D_{x'}f_z }{1+\left|D_{x'}f\right|^2}D_{x'}f\right]\\
	&-\frac{2(m-1)z}{H}D\left(\log \chi\right)\cdot\left[-\left(1+2z\sqrt{f_z}\right)M^TD_{x'}f+\frac{zG\phi}{2}-\frac{AD_{x'}f}{\sqrt{f_z}}\right]\\
	\leq&(m-1)z\sqrt{N}|D^2\left(\log\chi\right)|+2L|f_t|\left|D\left(\log\chi\right)\right|+C(m-1)z\sqrt{E}\left|D\left(\log\chi\right)\right|\\
	&+C(m-1)zB\sqrt{Q}\left|D\left(\log\chi\right)\right|+C(m-1)f_z^{-1}\left|D\left(\log\chi\right)\right|.
\end{align*}
Here, the last term use
\begin{align*}
	\frac{z}{\sqrt{f_z}H}=f_z^{-1}B\leq f_z^{-1}.
\end{align*}
Young's inequality gives
\begin{align*}
	C(m-1)z\sqrt{E}\left|D\left(\log\chi\right)\right|\leq& \eps(m-1)zE+C_{\eps}(m-1)z\left|D\left(\log\chi\right)\right|^2,\\
	C(m-1)zB\sqrt{Q}\left|D\left(\log\chi\right)\right|\leq&\eps (m-1)zBQ+C_{\eps}(m-1)z\left|D\left(\log\chi\right)\right|^2.
\end{align*}
Thus
\begin{align*}
	\label{esti Lchi}
	\tag{4.15}
	\begin{split}
	\mathcal{L}\left(\log\chi\right)\leq&\eps(m-1)z\left(E+BQ\right)+C\left(|f_t|+f_z^{-1}\right)\left|D\left(\log\chi\right)\right|\\&+C(m-1)z\left(\left|D\left(\log\chi\right)\right|^2+\left|D^2\left(\log\chi\right)\right|\right)
	\end{split}
\end{align*}

Third, for $\mathcal{L}z$, 
\begin{align*}
	\mathcal{L}z=&\frac{F+2f_t}{f_z}-2(m-1)z\frac{\operatorname{tr}M}{f_z}-\frac{2(m-1)z}{H}f_z^{-1}\left[-1-\frac{D_{x'}f\cdot D_{x'}f_z}{f_z}+D_{x'}f\cdot MD_{x'}f\right]\\
	&-\frac{2(m-1)z^2}{H}f_z^{-\frac12}\left[\frac{A}{2}\left(\frac{f_{zz}}{f_z^2}-D_{x'}f\cdot\phi\right)-2D_{x'}f\cdot\left(\frac{D_{x'}f_z}{f_z}-MD_{x'}f\right)\right]\\
	&-\frac{2(m-1)zA}{H}\frac{1+\left|D_{x'}f\right|^2}{f_z^{\frac32}}\\
	\leq&\frac{F+2f_t}{f_z}+2\sqrt{N}(m-1)zf_z^{-1}\sqrt{E}+C(m-1)zf_z^{-1}\left(1+\sqrt{E}\right)\\
	&+C(m-1)zBf_z^{-1}\left(\sqrt{E}+\sqrt{Q}\right).
\end{align*}
Young's inequality gives
\begin{align*}
	C(m-1)zf_z^{-1}\sqrt{E}\leq& \eps (m-1)zE+C_{\eps}(m-1)z f_z^{-2},\\
	C(m-1)zf_z^{-1}B\sqrt{Q}\leq&\eps(m-1)zBQ+C_{\eps}(m-1)zf_z^{-2}.
\end{align*}
Therefore,
\begin{align*}
	\label{esti Lz}
	\tag{4.16}
	\begin{split}
		\mathcal{L}z\leq \frac{F+2f_t}{f_z}+\eps(m-1)z\left(E+BQ\right)+C(m-1)z\left(f_z^{-1}+f_z^{-2}\right).
	\end{split}
\end{align*}

Finally, for $\Lambda\left(\log\vartheta+\log\chi+z,\log\vartheta+\log\chi+z\right)$,
\begin{align*}
	\label{esti Lam}
	\tag{4.17}
	\begin{split}
	&\Lambda\left(\log\vartheta+\log\chi+z,\log\vartheta+\log\chi+z\right)\\
	=&(m-1)z\left[\left|D\left(\log\chi\right)-f_z^{-1}D_{x'}f\right|^2+\left|f_z^{-1}+D\left(\log\vartheta\right)\right|^2\right]\\
	\leq&(m-1)z\left[2\left|D\left(\log\chi\right)\right|^2+\left(2+2L^2\right)f_z^{-2}+2\left(D\left(\log\vartheta\right)\right)^2\right].
	\end{split}
\end{align*}

With all these estimates, we begin to prove our central estimates.
\begin{proof}[Proof of Theorem \ref{thm up grad}]
By the definition of $\mathcal{P}$, the positive maximum point of $\mathcal{P}$ on $\mathcal{D}$ lies on the top boundary or the interior. For the top boundary, our assumption gives $-f_t/H\leq H_1$. Hence we only consider the interior on which $\tau\chi\vartheta>0$. At the positive maximum point, we have  $f_t<0$. Since the maximum point of $\mathcal P$ is also the maximum point of $\log\mathcal{P}$, we get
\begin{align*}
	\log\mathcal{P}=&\log \tau+\log\chi +\log \vartheta+z+\log w,\\
	D\left(\log\mathcal{P}\right)=&D\left(\log \vartheta+\log\chi +z+\log w\right)=0,\\
	\left(\log \mathcal{P}\right)_t\geq&0,\qquad D^2\left(\log\mathcal{P}\right)\leq0.
\end{align*}
Since $\mathcal{A}$ is positive definite, we have
\begin{align*}
	\mathcal{L}\left(\mathcal{\log P}\right)\geq0.
\end{align*}
Notice that \eqref{product} and \eqref{Lw} give
\begin{align*}
	\mathcal{L}\left(\log w\right)=&T\left(\log w\right)-2\Lambda\left(\log H,\log w\right)\\
	=&\frac{Tw}{w}+\frac{\Lambda(w,w)}{w^2}-2\Lambda\left(\log H,\log w\right)\\
	=&\frac{1}{w}\left(-w\frac{TH}{H}+2\Lambda\left(\log H,w\right)\right)+\Lambda(\log w, \log w)-2\Lambda\left(\log H,\log w\right)\\
	=&-\frac{TH}{H}+\Lambda\left(\log w,\log w\right)\\
	=&-\frac{TH}{H}+\Lambda\left(\log \vartheta+\log\chi+z,\log \vartheta+\log\chi+z\right)
\end{align*}
Expanding $\mathcal{L}\left(\log\mathcal{P}\right)$ and using above identity,  \eqref{product}, \eqref{esti TH}, \eqref{computation2}-\eqref{esti Lam}, we obtain
\begin{align*}
	0\leq& \mathcal{L}\left(\log \tau\right)+ \mathcal{L}\left(\log \chi\right)+ \mathcal{L}\left(\log \vartheta\right)+ \mathcal{L}z+ \mathcal{L}\left(\log w\right)\\
	=&\tau^{-1}+ \mathcal{L}\left(\log \chi\right)+D\left(\log\vartheta\right) \mathcal{L}f-(m-1)zD^2\left(\log\vartheta\right)+ \mathcal{L}z\\
	&-\frac{TH}{H}+\Lambda\left(\log \vartheta+\log\chi+z,\log \vartheta+\log\chi+z\right)\\
	\leq&\tau^{-1}+\frac{F+2f_t}{f_z}-c_*\sqrt{f_z}\frac{f_t^2}{H}+(3\eps-c_*)(m-1)z\left(E+BQ\right)\\&
	+C(m-1)z\left(f_z^{-1}+f_z^{-2}\right)+Cf_z^{-\frac32}+C\left(f_z^{-1}-f_t\right)\left|D\left(\log\chi\right)\right|\\
	&+C(m-1)z\left[1+\left|D\left(\log\chi\right)\right|^2+\left|D^2\left(\log\chi\right)\right|+\left|D^2\left(\log\vartheta\right)\right|+\left(D\left(\log\vartheta\right)\right)^2+D\left(\log\vartheta\right)\right].
\end{align*}

Choose $\eps=c_*/4$, depending only on $m$, $n$, $L$, $C_H$, then
\begin{align*}
	(3\eps-c_*)(m-1)z\left(E+BQ\right)\leq 0.
\end{align*}

By assumption
\begin{align*}
	D\left(\log\vartheta\right)=&\frac{D\vartheta}{\vartheta}\leq C\vartheta^{-\frac12}\leq C\vartheta^{-1},&&\qquad  \left|D^2\left(\log\vartheta\right)\right|=\left|\frac{D^2\vartheta}{\vartheta}-\frac{\left(D\vartheta\right)^2}{\vartheta^2}\right|\leq C\vartheta^{-1},\\
	\left|D\left(\log \chi\right)\right|=&\frac{\left|D\chi\right|}{\chi}\leq C\chi^{-\frac12},&&\qquad \left|D^2\left(\log\chi\right)\right|=\left|\frac{D^2\chi}{\chi}-\frac{\left(D\chi\right)^2}{\chi^2}\right|\leq C\chi^{-1},
\end{align*}
where the constant $C$ depends additionally on $C_{\chi}$ and $C_{\vartheta}$ here and in the sequel.

Moreover, assumption gives $\Delta v\geq -K_{\delta}$. Combine with \eqref{pme v},
\begin{align*}
	v_t\geq |\nabla v|^2-(m-1)K_{\delta}v.
\end{align*}
Then, for $f$,
\begin{align*}
	(m-1)K_{\delta}zf_z^2-f_tf_z\geq 1+|D_{x'}f|^2.
\end{align*}
Solving this quadratic of $f_z$ gives
\begin{align*}
	f_z\geq \frac{f_t+\sqrt{f_t^2+4(m-1)K_{\delta}z\left(1+\left|D_{x'}f\right|^2\right)}}{2(m-1)K_{\delta}z}.
\end{align*}
Then
\begin{align*}
	F=\frac{1+\left|D_{x'}f\right|^2}{f_z}\leq& \frac{2(m-1)K_{\delta}z\left(1+\left|D_{x'}f\right|^2\right)}{f_t+\sqrt{f_t^2+4(m-1)K_{\delta}z\left(1+\left|D_{x'}f\right|^2\right)}}\\
	=&\frac{-f_t+\sqrt{f_t^2+4(m-1)K_{\delta}z\left(1+\left|D_{x'}f\right|^2\right)}}{2}.
\end{align*}
Using inequality $\sqrt{a^2+b}\leq a+\sqrt{b}$ for $a,b\geq0$ we obtain
\begin{align*}
	\label{ineq F}
	\tag{4.18}
	f_z^{-1}\leq F\leq -f_t+\sqrt{(m-1)K_{\delta}z\left(1+L^2\right)}=-f_t+C_F\sqrt{z},
\end{align*}
where $C_F=\sqrt{(m-1)K_{\delta}\left(1+L^2\right)}$. Therefore
\begin{align*}
	\frac{F+2f_t}{f_z}\leq f_tf_z^{-1}+C_F\sqrt{z}f_z^{-1}.
\end{align*}

Now we estimate, for $z\ll 1$, the terms
\begin{align*}
	Cf^{-\frac32}_z+C(m-1)z\left(f_z^{-1}+f_z^{-2}\right)+C_F\sqrt{z}f_z^{-1}.
\end{align*}
 If $f_z^{-1}\geq 2C_F$, then \eqref{ineq F} gives
\begin{align*}
	f_z^{-1}\leq -2f_t,\qquad f_z^{-2}\leq -2f_tf_z^{-1}.
\end{align*}
Choose $Q_0\geq\max\left\{1,2C_F\right\}$ large enough such that, for $f_z^{-1}\geq Q_0$, 
\begin{align*}
	Cf^{-\frac32}_z+C(m-1)zf_z^{-1}+C_F\sqrt{z}f_z^{-1}\leq \frac18f_z^{-2}\leq -\frac14f_tf_z^{-1}.
\end{align*}
For $f_z^{-1}<Q_0$,
\begin{align*}
Cf^{-\frac32}_z+C(m-1)zf_z^{-1}+C_F\sqrt{z}f_z^{-1}\leq C,
\end{align*}
where the constant $C$ depends additionally on $K_{\delta}$ here and in the sequel. Therefore, for all $f_z^{-1}>0$,
\begin{align*}
	Cf^{-\frac32}_z+C(m-1)zf_z^{-1}+C_F\sqrt{z}f_z^{-1}\leq -\frac14f_tf_z^{-1}+C.
\end{align*}
As for $C(m-1)zf_z^{-2}$, we can choose $Z_1$ small enough satisfying 
\begin{align*}
C(m-1)Z_1\leq \frac18,
\end{align*}
where $C$ is the coefficient of term $(m-1)zf_z^{-2}$. Then, for $f_z^{-1}\geq 2C_F$,
\begin{align*}
	C(m-1)zf_z^{-2}\leq \frac18 f_z^{-2}\leq -\frac14f_tf_z^{-1}.
\end{align*}
For $f_z^{-1}<2C_F$, we have $C(m-1)zf_z^{-2}\leq C$. Hence
\begin{align*}
	C(m-1)zf_z^{-2}\leq -\frac14f_tf_z^{-1}+C.
\end{align*}

Consequently, using all estimates we obtain
\begin{align*}
	\label{ineq central 1}
	\tag{4.19}
	c_*\frac{\sqrt{f_z}}{H}f_t^2-\frac12f_tf_z^{-1}\leq&\tau^{-1}+C\left(-f_t+1\right)\chi^{-\frac12}+C(m-1)z\left[1+\chi^{-1}+\vartheta^{-1}\right].
\end{align*}
Here, for $f_z^{-1}-f_t$, we use \eqref{ineq F} to show $f_z^{-1}-f_t\leq C(-f_t+1)$.

Now, we claim
\begin{align*}
	\label{ineq ft}
	\tag{4.20}
	\frac{\sqrt{f_z}}{H}f_t^2-f_tf_z^{-1}\geq C\left(-f_t\right)^{\frac53}\quad\text{for $-f_t\geq 1$}.
\end{align*}
In the case $f_z^{-1}\leq 1$,
\begin{align*}
	\frac{\sqrt{f_z}}{H}=\frac{1}{f_z^{-\frac12}\left(1+F_+-f+\frac{L^2-\left|D_{x'}f\right|^2}{2}\right)+z\left(1+9L^2-|D_{x'}f|^2\right)}\geq \frac{1}{2C_H},
\end{align*}
where $C_H$ is given in \eqref{bdd H}. Hence
\begin{align*}
	\frac{\sqrt{f_z}}{H}f_t^2-f_tf_z^{-1}\geq \frac{\sqrt{f_z}}{H}f_t^2\geq \frac{1}{2C_H}\left(-f_t\right)^{\frac53}.
\end{align*}
In the case $f_z^{-1}\geq1$, if $f_z^{-1}\geq \left(-f_t\right)^{\frac23}$, then
\begin{align*}
	\frac{\sqrt{f_z}}{H}f_t^2-f_tf_z^{-1}\geq-f_tf_z^{-1}\geq \left(-f_t\right)^{\frac53}.
\end{align*}
If $f_z^{-1}\leq \left(-f_t\right)^{{\frac23}}$, since $H\leq 2C_H$,
\begin{align*}
	\frac{\sqrt{f_z}}{H}f_t^2-f_tf_z^{-1}\geq\frac{\sqrt{f_z}}{H}f_t^2\geq \frac{1}{2C_H}\left(-f_t\right)^{\frac53}.
\end{align*}
Thus we prove the claim.

For $-Cf_t\chi^{-\frac12}$, Young's inequality gives
\begin{align*}
	-Cf_t\chi^{-\frac12}\leq \eps \left(-f_t\right)^{\frac53}+C_{\eps}\chi^{-\frac54}.
\end{align*}
Substitute above inequality and \eqref{ineq ft} into \eqref{ineq central 1}, and choose $\eps$ small enough such that $\eps \leq C/2$, where $C$ is given in \eqref{ineq ft}, then
\begin{align*}
	\left(-f_t\right)^{\frac53}\leq C\left(1+\chi^{-\frac54}+\vartheta^{-1}+\tau^{-1}\right)\quad\text{for $-f_t\geq1$}.
\end{align*}
When $-f_t\leq 1$, $-f_t/H$ is obviously bounded. Therefore, at the positive maximum point
\begin{align*}
	\left(-f_t\right)^{\frac53}\leq C\left(1+\chi^{-\frac54}+\vartheta^{-1}+\tau^{-1}\right).
\end{align*}
Multiply this inequality by $\left(\tau\chi e^z\vartheta/H\right)^{\frac53}$, since $H\geq 1$, $z,\chi,\vartheta\leq 1$
\begin{align*}
	\mathcal{P}^{\frac{5}{3}}\leq C\tau^{\frac53}\left(1+\chi^{\frac{5}{12}}+\vartheta^{\frac23}\right)+C\tau^{\frac{2}{3}}\leq C\left(\tau^{\frac53}+\tau^{\frac{2}{3}}\right).
\end{align*}
Finally, at the point on which $\chi=\theta=1$
\begin{align*}
\frac{-f_t}{H}=\frac{\mathcal{P}}{\tau e^z}	\leq C\left(1+\tau ^{-\frac35}\right).
\end{align*}
This makes the proof.
\end{proof}

With the central estimate in Theorem \ref{thm up grad} we give the upper gradient estimate.
\begin{corollary}\label{coro up grad}
	Under the hypotheses of Theorem \ref{thm up grad}. For $\gamma<z\leq Z$, $\chi=\vartheta=1$, $-1+\delta<t\leq 0$, there exists a new constant $C_1>0$, such that
	\begin{align*}
		\left|\nabla v\right|\leq C_1\left(1+H_1+\left(t+1-2\delta\right)^{-3/5}\right),\qquad|v_t|\leq C_1\left(1+H_1+\left(t+1-2\delta\right)^{-3/5}\right)^2.
	\end{align*}	
\end{corollary}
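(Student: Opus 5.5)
We derive the corollary from Theorem~\ref{thm up grad} by translating the bound on $-f_t/H$ back into bounds on $\nabla v$ and $v_t$. Throughout we work at points with $\gamma<z\le Z$ and $\chi=\vartheta=1$, and we write
\[
\Xi:=1+H_1+(t+1-2\delta)^{-3/5}.
\]
The theorem gives $-f_t\le C_1\,\Xi\,H$ there.

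\textbf{Step 1: lower bound on $f_z$.} We recall $D_{x_n}v=1/f_z$, $|D_{x'}v|=|D_{x'}f|/f_z\le L/f_z$, and therefore
\[
|\nabla v|\le \sqrt{1+L^2}\,f_z^{-1}.
\]
So it suffices to bound $f_z^{-1}$ from above. For this we use \eqref{ineq F}: $f_z^{-1}\le F\le -f_t+C_F\sqrt z$. Combining it with the theorem and with \eqref{bdd H}, $H\le C_H(1+z\sqrt{f_z})$, yields
\[
f_z^{-1}\le C\,\Xi\,(1+z\sqrt{f_z})+C.
\]

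\textbf{Step 2: closing the inequality.} This is the only delicate point, since the right-hand side of Step 1 contains $\sqrt{f_z}$, which is large exactly when $f_z^{-1}$ is small. We split into two cases.
\begin{itemize}
\item If $z\sqrt{f_z}\le 1$, then $H\le 2C_H$, and Step 1 gives $f_z^{-1}\le C\,\Xi$ directly.
\item If $z\sqrt{f_z}>1$, then $f_z>z^{-2}\ge Z_1^{-2}$, so $f_z^{-1}\le Z_1^2\le 1$, which is bounded trivially.
\end{itemize}
In either case $f_z^{-1}\le C\,\Xi$. By Step 1 this gives $|\nabla v|\le C_1\,\Xi$ after renaming the constant.

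\textbf{Step 3: bound on $v_t$.} We use the equation rather than $f_t$. From $v_t=-f_t/f_z$ we have
\[
|v_t|=|f_t|\,f_z^{-1}.
\]
The factor $f_z^{-1}$ is at most $C\,\Xi$ by Step 2. For $|f_t|$ we argue by sign.
\begin{itemize}
\item When $f_t<0$, we have $|f_t|\le C_1\,\Xi\,H$. In the regime $z\sqrt{f_z}\le 1$, $H$ is bounded, so $|v_t|\le C\,\Xi^2$. In the regime $z\sqrt{f_z}>1$, we have $H\le 2C_H z\sqrt{f_z}$, so $|f_t|f_z^{-1}\le C\,\Xi\, z f_z^{-1/2}\le C\,\Xi$, again within the claimed bound.
\item When $f_t\ge 0$, we have $v_t\le 0$. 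The Aronson–B\'enilan type bound $v_t\ge |\nabla v|^2-(m-1)K_\delta v$ then gives $|v_t|\le (m-1)K_\delta Z_*$, a constant.
\end{itemize}
Together these give $|v_t|\le C_1\,\Xi^2$. Note that the time range stated in the corollary, written as $-1+\delta<t$, only makes sense through the same factor $(t+1-2\delta)^{-3/5}$, so we take it to be inherited from the theorem's range $-1+2\delta<t\le 0$.
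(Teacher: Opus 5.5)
Your proposal is correct and follows essentially the paper's proof. Where $H$ is bounded, the theorem controls $-f_t$ and \eqref{ineq F} turns this into a bound on $f_z^{-1}$; the complementary regime is trivial (the paper splits on $f_z\le 1$ versus $f_z\ge 1$, you split on $z\sqrt{f_z}\le 1$, which does the same job since $z\le 1$); and $f_t\ge 0$ is handled by $v_t\ge |\nabla v|^2-(m-1)K_\delta v$. Your reading of the time range as $-1+2\delta<t\le 0$ is also the right one.

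One small slip: \eqref{ineq F} in the form $F\le -f_t+C_F\sqrt z$ was derived assuming $f_t\le 0$, and it is false when $f_t>C_F\sqrt z$. So in Step 1, for $f_t>0$, use instead $F\le \frac{1}{2}\bigl(-f_t+\sqrt{f_t^2+4(m-1)K_\delta z(1+|D_{x'}f|^2)}\bigr)\le C_F\sqrt z$, or, as the paper does, $|\nabla v|^2\le v_t+(m-1)K_\delta v\le (m-1)K_\delta v$ directly. Either gives the same conclusion.
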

\begin{proof}
	For $f_t\leq 0$ and $f_z\leq 1$, we have $H\leq C+Cz\sqrt{f_z}\leq C$, then Theorem \ref{thm up grad} gives
	\begin{align*}
		-f_t\leq HC_1\left(1+H_1+\left(t+1-2\delta\right)^{-3/5}\right)\leq C_1\left(1+H_1+\left(t+1-2\delta\right)^{-3/5}\right).
	\end{align*}
Therefore, \eqref{ineq F} gives
\begin{align*}
	\left|\nabla v\right|=\frac{\sqrt{1+\left|D_{x'}f\right|^2}}{f_z}\leq C_1\left(1+H_1+\left(t+1-2\delta\right)^{-3/5}\right).
\end{align*}
Also
\begin{align*}
	\left|v_t\right|=\frac{-f_t}{f_z}\leq C_1\left(1+H_1+\left(t+1-2\delta\right)^{-3/5}\right)^2.
\end{align*}
Above estimates also holds for $f_z\geq 1$.

For $f_t\geq0$, we have $v_t\leq 0$, by our assumption 
\begin{align*}
	|\nabla v|^2\leq v_t+(m-1)K_{\delta}v\leq (m-1)K_{\delta}v\leq C_1.
\end{align*}
Also we have lower bound for $v_t$
\begin{align*}
	v_t\geq -(m-1)K_{\delta}v\geq C_1.
\end{align*}
It makes the proof.
\end{proof}



\section{Lower bound gradient estimates}\label{s5}
In this section, we establish the lower gradient estimate with the upper gradient estimates obtained in Section \ref{s4}.

First, we show the linear growth of $v$ near the free boundary.
\begin{theorem}\label{thm low grad}
	Let $v$ be the continuous weak solution of \eqref{pme v} on $B_{1-\delta}\times[-1+3\delta,0]$ on which
	\begin{align*}
		\left|\nabla v\right|\leq C_1,\qquad \left|v_t\right|\leq C_1,\qquad\Delta v \geq -K_{\delta}\quad\text{in the sense of distributions,}
	\end{align*} 
for a constant $C_1>0$ and $\delta$ is given in Lemma \ref{lem 3.1}, the constant $K_{\delta}>0$ is given in Proposition \ref{local AB prop}. Suppose the local cone condition holds for $v$ on $B_{1-\delta}\times[-1+3\delta,0]$. Consider a compact subset $\mathcal{K}$ of
\begin{align*}
	\left\{(x,t)\in B_{1-\delta}\times(-1+3\delta,0]:x\in\Gamma(t),\ x\notin\overline{\ome\left(-1+3\delta\right)}\right\}.
\end{align*}
Then there are constants $C_2>0$ and $\eps_*>0$, depend only on $n$, $m$, $\theta$, $C_1$, $K_{\delta}$, $\mathcal{K}$, such that for all $(q,t_0)\in\mathcal{K}$ and $0<\eps<\eps_*$
\begin{align*}
	v(q+\eps \mu,t_0)\geq C_2\eps.
\end{align*}
\end{theorem}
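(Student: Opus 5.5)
The plan is to argue by contradiction, combining the local cone condition with the non-advancing criterion of Proposition~\ref{CF prop} in rescaled form.

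\textbf{Setup.} By the cone condition (Remark~\ref{remark 1}, with $\mu=e_n$), $\Omega(t)=\{x_n>f(x',t)\}$ near $\mathcal K$. By Proposition~\ref{local AB prop}, $\Omega(t)$ is nondecreasing in $t$, so $f(x',\cdot)$ is nonincreasing. Since $\mathcal K$ is compact and disjoint from $\overline{\Omega(-1+3\delta)}$, there is $d>0$ with $f(q',-1+3\delta)\ge q_n+d$ for all $(q,t_0)\in\mathcal K$. Thus between $-1+3\delta$ and $t_0$ the front above $q'$ travels a distance at least $d$ in time at most $1$.

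\textbf{Step 1 (smallness spreads).} Suppose the claim fails. Then there are $(q_k,t_k)\in\mathcal K$, $\eps_k\to0$ and $a_k\to0$ with $v(q_k+\eps_k\mu,t_k)\le a_k\eps_k$. By the cone condition, $v(\cdot,t_k)\le a_k\eps_k$ on the backward cone $q_k+\eps_k\mu-\mathcal C_{\mu,\theta}$, which contains $B_{\eps_k\sin\theta}(q_k)$ and the whole region below $q_k+\eps_k\mu$. By the time monotonicity in Proposition~\ref{local AB prop} (up to the factor $e^{(m-1)K_\delta}$), the same bound holds, up to a constant, at all earlier times $t\le t_k$ on that region.

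\textbf{Step 2 (the front cannot advance).} Apply Proposition~\ref{CF prop} at scale $R\sim\eps_k\sin\theta$ with $\sigma=\min\{\sigma_0,\,c_0R^2/(Ca_k\eps_k)\}\sim c\,\eps_k/a_k$, at balls centred slightly below the front. The average of $v$ over $B_R$ is $\le Ca_k\eps_k\le c_0R^2/\sigma$. So wherever $v(\cdot,t_1)\equiv0$ on $B_R(x_0)$, it remains zero on $B_{R/6}(x_0)$ at time $t_1+\sigma$. Iterating, the front inside the cone region near $q_k$ moves at speed $O(a_k)$, i.e.\ it advances by at most $O(R)$ over each time step of length $\sim\eps_k/a_k$.

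\textbf{Step 3 (contradiction with the upper bounds).} Next I use $|\nabla v|,|v_t|\le C_1$. These give a lower bound on how far back in time the smallness region persists: $v\le Ca_k\eps_k+C_1(t_k-t)$ there. Combined with Step~2, the front above $q_k'$ should have been stationary on a time interval of length comparable to $\min\{1,\eps_k/a_k\}\gg\eps_k$. Scaling $\eps_k$ up, and using uniform positivity of $d$, the front would then fail to reach $q_k$ by time $t_k$, contradicting $(q_k,t_k)\in\Gamma$ and the fact that the front must cover the distance $d$.

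An alternative route to the same contradiction is a blow-up. Set $v_k(x,t)=\eps_k^{-1}v(q_k+\eps_kx,t_k+\eps_kt)$, which is again a solution of \eqref{pme v} under hyperbolic scaling. The $v_k$ are uniformly Lipschitz, satisfy the cone condition, and $\Delta v_k\ge-K_\delta\eps_k$. A limit $w$ has $w(\mu,0)=0$, hence $w\equiv0$ near the origin at $t=0$. Then Theorem~\ref{CF thm} and Proposition~\ref{CF prop} force the origin to be a stationary, non-boundary point, which is incompatible with the front having moved at $q$.

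\textbf{Main obstacle.} The key difficulty is the quantitative bookkeeping in Steps~2--3: converting one pointwise smallness at time $t_k$ into smallness along the front path over a long enough time window so that Proposition~\ref{CF prop} halts the front. I would first try to control this using the time monotonicity together with the cone geometry, which propagates smallness to the entire lower region and to earlier times. If that proves insufficient, I would fall back on the blow-up contradiction.
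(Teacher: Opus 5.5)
Your proposal has a genuine gap: it never produces a lower bound on the front's speed at scale $\eps$, and that bound is the whole content of the theorem.

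\textbf{What Steps 1--2 do establish.} Smallness $v(q_k+\eps_k\mu,t_k)\le a_k\eps_k$ propagates, by the cone condition and the time monotonicity of Proposition~\ref{local AB prop}, to the fixed backward cone $q_k+\eps_k\mu-\mathcal C_{\mu,\theta}$ at all earlier times. Proposition~\ref{CF prop} then shows that the front stays within $O(\eps_k)$ of $q_k$ for a time of order $\eps_k/a_k$. That is a statement that the front is slow near $(q_k,t_k)$. It is not a contradiction.

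\textbf{Why Step 3 does not give one.} The displacement $d$ is information at the macroscopic scale. A front can cover distance $d$ in time at most $1$ and still be slow on a window of length $\eps_k/a_k$ near $(q_k,t_k)$; that window may itself shrink to zero. Nothing in your argument ``scales $\eps_k$ up''. Your smallness region is anchored at the apex $q_k+\eps_k\mu$, so once the front rises past it (going backward in time) you have no information. Beyond distance $\sim\eps_k$ you only have $|\nabla v|\le C_1$, which bounds the front speed from above, never from below. The bound $v\le Ca_k\eps_k+C_1(t_k-t)$ adds nothing beyond what Proposition~\ref{local AB prop} already gives.

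\textbf{Why the blow-up route fails too.} The limit $w$ vanishes near the origin at $t=0$. Nothing forces the origin to be a free boundary point of $w$, because passing free boundaries to the limit is exactly the nondegeneracy you are trying to prove. The fact that the front travelled from far away lives at scale $d/\eps_k\to\infty$ and is lost in the limit.

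\textbf{The missing ingredient.} You need, with $C,c$ independent of $\eps$, that $v(\cdot,t_0-C\eps)\equiv0$ on $B_{c\eps}(q)$. Given this, your own use of Proposition~\ref{CF prop} finishes the proof. Take $R=c\eps$ and $\sigma=C\eps$. Since $q\in\Gamma(t_0)$, the zero ball cannot persist, so the average of $v(\cdot,t_0)$ on $B_R(q)$ exceeds $c_0R^2/\sigma\sim\eps$. The cone condition converts this average into a pointwise bound at $q+C'\eps\mu$.

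\textbf{How the paper gets it.} The paper uses a barrier on the annulus $B_{r/2}(x^*)\setminus B_{r^*}(x^*)$, where $x^*=q-\frac r5\mu$. It compares $v$ with $w(x,s)=g(s)\min_{|y-x|\le\eps\phi(x)}v(y+\eps r\mu,\zeta(s))$, which is a supersolution by Lemma~\ref{supsolution}, with accelerated time $\zeta(s)>s$.
\begin{itemize}
\item The radius function $\phi$ is small on the outer sphere, so the cone condition gives $w\ge v$ there. It equals $4r$ at $q$.
\item The inner sphere lies in the backward cone of $q$, hence in the zero set.
\item The initial ordering uses $v(\cdot,t_0-\tau)\equiv0$ on $B_r(q)$. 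This is exactly where $q\notin\overline{\Omega(-1+3\delta)}$ and Theorem~\ref{CF thm} enter quantitatively.
\end{itemize}
Comparison at the time $s_\tau$ with $\zeta(s_\tau)=t_0$ and $t_0-s_\tau=O(\eps)$ yields the zero ball $B_{\eps r}(q)$. Without such a barrier, or an equivalent mechanism that carries the macroscopic information down to scale $\eps$, your argument does not prove the theorem.
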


Before, we start to prove Theorem \ref{thm low grad}, we state some lemmas providing the construction of the supsolution.
\begin{lemma}\label{viscosity solution}
	Let $p(x)$ be a smooth function and $\psi(x)>0$ be a smooth function with $\left|\nabla \psi\right|<1/2$. Set
	\begin{align*}
		W(x)=\min_{|y-x|\leq \psi(x)}p(y).
	\end{align*}
Suppose that the minimizing point at $x_0$ is $y_0=x_0-\psi(x_0)e$ with $|e|=1$. If a smooth function $g$ touches $W$ from below at $x_0$, then
\begin{align*}
	\nabla g(x_0)=&\left|\nabla p(y_0)\right|\left(e-\nabla \psi (x_0)\right),\\
	\Delta g(x_0)\leq& \left|e-\nabla \psi(x_0)\right|^2\Delta p(y_0)+\left|\nabla p(y_0)\right|\left(-\Delta\psi(x_0)+(n-1)\frac{\left|\nabla \psi(x_0)\right|^2}{\psi(x_0)}\right).
\end{align*}
Suppose that the minimizing point lies in the interior, then
\begin{align*}
	\nabla g(x_0)=0,\qquad\Delta g(x_0)\leq 0.
\end{align*}
\end{lemma}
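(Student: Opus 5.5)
We prove the two assertions separately: the interior case is elementary, and the boundary case uses a first-order Taylor expansion of $\psi$ and second-order expansion of $p$ at the minimizer $y_0$, then compares with $g$.

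\emph{Interior case.} Suppose $|y_0-x_0|<\psi(x_0)$. Since $\psi$ is continuous, $y_0$ lies in the ball $\{|y-x|\le\psi(x)\}$ for every $x$ near $x_0$. Hence $W(x)\le p(y_0)=W(x_0)$ near $x_0$, so $x_0$ is a local maximum of $W$. If $g$ touches $W$ from below at $x_0$, then
\begin{align*}
g(x)\le W(x)\le W(x_0)=g(x_0),
\end{align*}
so $x_0$ is a local maximum of $g$. Therefore $\nabla g(x_0)=0$ and $\Delta g(x_0)\le0$.

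\emph{Boundary case.} Here $y_0=x_0-\psi(x_0)e$ and $y_0$ minimizes $p$ over the ball $B_{\psi(x_0)}(x_0)$. The Lagrange condition gives $\nabla p(y_0)=\lambda e$ with $\lambda=|\nabla p(y_0)|\ge0$, since the minimizer sits on the sphere and $p$ increases toward the center. If $\lambda=0$ the gradient formula is trivial, and the Laplacian bound follows from the same argument below with the first-order term dropped.

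Fix a smooth family of rotations $R(x)$ with $R(x_0)=I$, chosen so that the inward-pointing direction is transported consistently. Define the competitor
\begin{align*}
Y(x)=x-\psi(x)\,e(x),
\end{align*}
where $e(\cdot)$ is a smooth unit vector field with $e(x_0)=e$. Then $Y(x)$ lies in the admissible ball for $x$, so
\begin{align*}
g(x)\le W(x)\le p(Y(x)),
\end{align*}
with equality at $x_0$. Therefore $h(x):=p(Y(x))-g(x)$ has a local minimum at $x_0$, which gives $\nabla g(x_0)=\nabla(p\circ Y)(x_0)$ and $\Delta g(x_0)\le\Delta(p\circ Y)(x_0)$, for every admissible choice of $e(\cdot)$.

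For the gradient, $DY=I-e\otimes\nabla\psi-\psi\, De$. Since $|e(x)|=1$, we have $De^{T}e=0$, so the term $\psi\,De$ contributes $\lambda\psi\, De^{T}e=0$ to $\nabla(p\circ Y)$. Hence
\begin{align*}
\nabla(p\circ Y)(x_0)=\lambda\left(e-\nabla\psi(x_0)\right),
\end{align*}
using $DY^{T}\nabla p=\lambda\left(e-\nabla\psi\,(e\cdot e)\right)$. This is the claimed formula for $\nabla g(x_0)$.

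\emph{Choosing $e(\cdot)$ for the Laplacian.} This is the main obstacle. We take
\begin{align*}
e(x)=e+A(x-x_0),
\end{align*}
normalized to unit length, with $A$ a linear map into $e^\perp$ still to be chosen. The chain rule gives
\begin{align*}
\Delta(p\circ Y)=\operatorname{tr}\left(DY^{T}D^2p\,DY\right)+\nabla p\cdot\Delta Y.
\end{align*}

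The second-order term has $DY=I-e\otimes\nabla\psi-\psi A$. Minimizing over $A$ is cumbersome, so we pick $A$ directly: set $\psi A=-P\,\nabla\psi\otimes\ldots$, chosen so that $DY$ maps the plane $e^\perp$ as close to isometrically as possible. The goal is to bound the quadratic term by $|e-\nabla\psi|^2\Delta p(y_0)$ plus tangential contributions. We expect those tangential contributions to be controllable by the constrained-minimum condition: the Hessian of $p$ restricted to the sphere at $y_0$ satisfies
\begin{align*}
D^2p|_{T}+\frac{\lambda}{\psi}I_T\ge0.
\end{align*}
This is the step where the $(n-1)|\nabla\psi|^2/\psi$ term arises.

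For the term $\nabla p\cdot\Delta Y$, we have $\Delta Y=-\Delta\psi\, e-2De\,\nabla\psi-\psi\,\Delta e$. Its inner product with $\lambda e$ is
\begin{align*}
-\lambda\Delta\psi+\lambda\psi|A|^2,
\end{align*}
using $e\cdot\Delta e=-|De|^2$ and $e\cdot De=0$. With $\psi A\sim$ the projection of $\nabla\psi$ onto $e^\perp$ in each of the $n-1$ tangential directions, this yields at most $\lambda(n-1)|\nabla\psi|^2/\psi$.

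Combining the quadratic and first-order terms gives the stated inequality. The condition $|\nabla\psi|<1/2$ guarantees that $DY$ is nondegenerate and that these choices are legitimate. The delicate step is the exact choice of $A$ that reconciles the Hessian term with $|e-\nabla\psi|^2\Delta p$; we would first try the simplest choice, $A=0$ or a pure rotation, and then check where the tangential terms land.
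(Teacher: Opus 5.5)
Several parts of your proposal are correct and coincide with the paper's argument:
\begin{itemize}
\item the interior case;
\item the Lagrange condition $\nabla p(y_0)=\lambda e$ with $\lambda\ge0$;
\item the reduction $\nabla g(x_0)=\nabla(p\circ Y)(x_0)$, $\Delta g(x_0)\le\Delta(p\circ Y)(x_0)$ for any admissible $Y(x)=x-\psi(x)e(x)$;
\item the gradient formula;
\item the identity $\nabla p\cdot\Delta Y(x_0)=\lambda\bigl(-\Delta\psi+\psi|A|^2\bigr)$.
\end{itemize}

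The gap is the Laplacian inequality, which is the real content of the lemma. You never fix $A$ (``$\psi A=-P\,\nabla\psi\otimes\ldots$'' is unfinished). Your plan to absorb ``tangential contributions'' through $D^2p|_T+\frac{\lambda}{\psi}I_T\ge0$ cannot rescue an arbitrary choice. For $\lambda>0$, minimality of $y_0$ constrains only $\nabla p(y_0)$ and the tangential block of $D^2p(y_0)$. The mixed entries $e^TD^2p(y_0)\tau$, $\tau\perp e$, are completely free. So if $DY(x_0)DY(x_0)^T$ has a nonzero normal--tangential part, $\operatorname{tr}\bigl(D^2p(y_0)\,DY\,DY^T\bigr)$ contains a term linear in these free entries, and no bound $|e-\nabla\psi|^2\Delta p(y_0)+O(\lambda)$ can hold. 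Your first fallback $A=0$ is exactly of this type: $DY\,DY^T=I-e\otimes\nabla\psi-\nabla\psi\otimes e+|\nabla\psi|^2e\otimes e$ leaves the uncontrolled term $-2\,e^TD^2p(y_0)\nabla\psi(x_0)$.

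The missing idea is to choose $A$ so that $DY(x_0)$ is conformal. Since $A^Te=0$, one has $DY(x_0)^Te=e-\nabla\psi(x_0)$ for every $A$. So the only possible conformal factor is $c=|e-\nabla\psi(x_0)|$, which is positive because $|\nabla\psi|<1/2$; this is where the coefficient $|e-\nabla\psi|^2$ comes from. Let $\mathcal R$ be a rotation of a two-plane containing $e$ and $\nabla\psi(x_0)$, equal to the identity on its orthogonal complement, with $\mathcal R^Te=(e-\nabla\psi(x_0))/c$. Put
\[
\psi(x_0)A=I-e\otimes\nabla\psi(x_0)-c\,\mathcal R .
\]
Then $e^TA=0$ automatically and $DY(x_0)=c\,\mathcal R$. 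The Hessian term is therefore exactly $c^2\Delta p(y_0)$, and no second-order minimality condition is needed at all.

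The $(n-1)|\nabla\psi|^2/\psi$ term comes solely from $\lambda\psi|A|^2$. In a basis with $e_n=e$ and $\nabla\psi(x_0)=a_1e_1+a_ne_n$, the nonzero entries of $\psi A$ are $a_n$ and $-a_1$ in the first row, and $1-c$ in the diagonal slots $2,\dots,n-1$. Hence $\psi^2|A|^2=|\nabla\psi|^2+(n-2)(1-c)^2\le(n-1)|\nabla\psi|^2$, using $|1-c|\le|\nabla\psi|$. Note that $\psi A$ involves the normal component $a_n$, not only the projection of $\nabla\psi$ onto $e^\perp$ as you suggested. This $A$ is precisely the paper's matrix $\mathcal B$, used with $E(x)=\frac{e+\mathcal B(x-x_0)}{|e+\mathcal B(x-x_0)|}$.
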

\begin{proof}
	For boundary minimum point, we construct a minimum point map $y(x)$ satisfying
	\begin{align*}
		y(x_0)=y_0,\qquad |y(x)-x|\leq \psi(x)\quad\text{for $x$ near $x_0$.}
	\end{align*}
Then we have
\begin{align*}
	p\left(y\left(x\right)\right)\geq W(x)\geq g(x),\qquad p\left(y\left(x_0\right)\right)=W(x_0)=g(x_0).
\end{align*}
By computing $\nabla p\left(y\left(x\right)\right)$ and $\Delta  p\left(y\left(x\right)\right)$ we makes the proof.

Choose an orthonormal basis with $e_n=e$ and $\nabla \psi(x_0)=a_1e_1+a_n e_n$, where $e_1\perp e_n$ and $a_1\geq 0$. Define an orthogonal matrix $\mathcal{A}$ by requiring that
\begin{align*}
	\mathcal{A}^T e=\frac{e-\nabla\psi(x_0)}{\left|e-\nabla\psi(x_0)\right|}.
\end{align*}
Above requirement shows that $\mathcal{A}$ is of form
\begin{align*}
	\mathcal A=
	\begin{pmatrix} 
		\frac{1-a_n}{\left|e-\nabla\psi(x_0)\right|}&0&&\frac{a_1}{\left|e-\nabla\psi(x_0)\right|}\\
		0&I_{n-2}&&0\\
		\frac{-a_1}{\left|e-\nabla\psi(x_0)\right|}&0&&\frac{1-a_n}{\left|e-\nabla\psi(x_0)\right|}
	\end{pmatrix}
.
\end{align*}
Define
\begin{align*}
	\mathcal{B}=\frac{I-e\otimes\nabla\psi(x_0)-|e-\nabla\psi(x_0)|\mathcal{A}}{\psi(x_0)}.
\end{align*}
Matrix $\mathcal{B}$ is of form
\begin{align*}
\mathcal{B}=\psi(x_0)^{-1}\begin{pmatrix} 
	a_n&0&&-a_1\\
	0&\left(1-\left|e-\nabla\psi(x_0)\right|\right)I_{n-2}&&0\\
	0&0&&0
\end{pmatrix}
.
\end{align*}
Hence
\begin{align*}
	\label{|B|}
	\tag{5.1}
	\begin{split}
	\left|\mathcal{B}\right|^2=&\frac{\left|\nabla\psi(x_0)\right|^2}{\left(\psi(x_0)\right)^2}+(n-2)\frac{\left(1-\left|e-\nabla\psi(x_0)\right|\right)^2}{\left(\psi(x_0)\right)^2}\\
	\leq&\frac{\left|\nabla\psi(x_0)\right|^2}{\left(\psi(x_0)\right)^2}+(n-2)\frac{\left|\nabla\psi(x_0)\right|^2}{\left(\psi(x_0)\right)^2}\\
	\leq &(n-1)\frac{\left|\nabla\psi(x_0)\right|^2}{\left(\psi(x_0)\right)^2}.
	\end{split}
\end{align*}
The requirement of $\mathcal{A}$ gives
\begin{align*}
	\label{eTB}
	\tag{5.2}
	\begin{split}
	e^T\mathcal{B}=&\psi(x_0)^{-1}\left[e^T-e^T\left(e\otimes\nabla \psi(x_0)\right)-|e-\nabla \psi(x_0)|e^T\mathcal{A}\right]\\
	=&\psi(x_0)^{-1}\left[e^T-\left(\nabla \psi(x_0)\right)^T-\left(e-\nabla \psi(x_0)\right)^T\right]\\
	=&0.
	\end{split}
\end{align*}

For $x$ near $x_0$, define
\begin{align*}
	E(x)=\frac{e+\mathcal{B}\left(x-x_0\right)}{\left|e+\mathcal{B}\left(x-x_0\right)\right|},\qquad y(x)=x-\psi(x)E(x).
\end{align*}
Using \eqref{eTB} we obtain
\begin{align*}
	E(x_0)=e,\qquad DE(x_0)=\mathcal{B}.
\end{align*}
Then for $x$ near $x_0$
\begin{align*}
	\label{y}
	\tag{5.3}
	\begin{split}
	y(x_0)=&x_0-\psi(x_0)e=y_0,\qquad\left|y(x)-x\right|=\psi(x)\\
	 D y(x_0)=&I-e\otimes \nabla \psi(x_0)-\psi(x_0)\mathcal{B}=\left|e-\nabla\psi(x_0)\right|\mathcal{A}.
	 \end{split}
\end{align*}
Since $\nabla p(y_0)=\left|\nabla p(y_0)\right|e$, \eqref{y} gives
\begin{align*}
	\nabla p\left(y(x_0)\right)=\left(Dy(x_0)\right)^T\nabla p(y_0)=\left|\nabla p(y_0)\right|\left|e-\nabla\psi(x_0)\right|\mathcal{A}^Te=\left|\nabla p(y_0)\right|\left(e-\nabla \psi(x_0)\right).
\end{align*}
For Laplacian,
\begin{align*}
\Delta y=-\Delta\psi E-2\nabla\psi\cdot\nabla E-\psi\Delta E.
\end{align*}
Since $E^2=1$, we have $\partial_i (E^2)=0$ and $\Delta (E^2)=0$ for any $i=1,\cdots,n$. Then, at $x=x_0$, we obtain $e\cdot \partial_i E(x_0)=0$ and $e\cdot \Delta E(x_0)=-\left|\mathcal{B}\right|^2$ for any $i=1,\cdots,n$. Hence
\begin{align*}
	\label{eDeltay}
	\tag{5.4}
	\begin{split}
	e\cdot\Delta y(x_0)=&-\Delta\psi(x_0)\left(e\cdot E(x_0)\right)-2e\cdot\left(\nabla \psi(x_0)\cdot \nabla E(x_0)\right)-\psi(x_0)e\cdot \Delta E(x_0)\\
	=&-\Delta\psi(x_0)+\psi(x_0)\left|\mathcal{B}\right|^2.
	\end{split}
\end{align*}
Therefore, \eqref{|B|}, \eqref{y} and \eqref{eDeltay} give
\begin{align*}
	\Delta p\left(y(x_0)\right)=&\sum_{i,k,l}\partial_i y_k(x_0)\partial_i y_l(x_0) \partial_{kl}p(y_0)+\sum_{i,k}\partial_{ii}y_k(x_0)\partial_k p(y_0)\\
	=&\left|e-\nabla \psi(x_0)\right|^2\Delta p(y_0)+\left|\nabla p(y_0)\right|\left(-\Delta\psi(x_0)+\psi(x_0)|\mathcal B|^2\right)\\
	\leq &\left|e-\nabla \psi(x_0)\right|^2\Delta p(y_0)+\left|\nabla p(y_0)\right|\left(-\Delta\psi(x_0)+(n-1)\frac{\left|\nabla \psi(x_0)\right|^2}{\psi(x_0)}\right).
\end{align*}

For interior minimum point, $y(x)$ stays admissible for $x$ near $x_0$, then the conclusion is obvious.
\end{proof}

\begin{lemma}\label{supsolution}
	Let $\mathcal{C}\Subset B_{1-\delta}$ be a bounded smooth domain and let $\phi(x)>0$ be a smooth function on $\overline{\mathcal{C}}$. Define
	\begin{align*}
		C_{\phi}=&\max_{\overline{\mathcal{C}}}\left\{|\nabla \phi|, -\Delta\phi +(n-1)\frac{\left|\nabla\phi\right|^2}{\phi}\right\}\\
		A_0=&1+2C_{\phi}+C_{\phi}^2, \qquad B_0=1+2(m-1)K_{\delta}A_0+(m-1)C_1C_{\phi}.
	\end{align*}
For any $r>0$ and $\tau>0$, choose $\eps>0$ such that
	\begin{align*}
		\eps\ll 1,\qquad \eps C_{\phi}\leq 1/2,\qquad B_0\eps \tau\leq 1/2.
	\end{align*}
Set
\begin{align*}
	g(s)=&\frac{1}{1-B_0\eps s},\qquad \zeta(0)=0,\qquad D\zeta(s)=\left(1+A_0\eps\right)g(s)\\
	w(x,s)=&g(s)\min_{|y-x|\leq \eps\phi(x)} v\left(y+\eps r \mu,\zeta(s)\right).
\end{align*}
Then $\left(\frac{m-1}{m}w\right)^{1/(m-1)}$ is the continuous weak supersolution of \eqref{pme u} on $\mathcal{C}\times\left[0,\tau\right]$.
\end{lemma}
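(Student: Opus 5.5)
We work with the pressure. Let $P(y,t)=v(y+\eps r\mu,t)$, $W(x,s)=\min_{|y-x|\le\eps\phi(x)}P(y,\zeta(s))$, and $w=gW$. It suffices to show that $w$ is a supersolution of
\[
w_s\ge(m-1)w\Delta w+|\nabla w|^2 .
\]
After this, $\bigl(\frac{m-1}{m}w\bigr)^{1/(m-1)}$ is a supersolution of \eqref{pme u}.

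Since $v$ is only Lipschitz, with $\Delta v\ge -K_\delta$ holding only in distributions, I first replace $v$ by the smooth positive approximants $v_j$ of Proposition \ref{smooth appro}, which are bounded below by $\gamma_j$. I assume that $v_j$ satisfies the hypotheses of Theorem \ref{thm low grad} with constants $C_1+o(1)$ and $K_\delta+o(1)$. The $\Delta v_j\ge-K_\delta-o(1)$ bound is not stated in Proposition \ref{smooth appro} (only the cone property and the convergence are), so it must be re-derived, e.g.\ by applying Proposition \ref{local AB prop} to $v_j$ on the region where the penalty vanishes. For each $j$ the function $w_j$ is then positive and the equation is uniformly parabolic. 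For such equations, viscosity supersolutions are weak supersolutions. Finally, $w_j\to w$ uniformly, and weak supersolutions of \eqref{pme u} are stable under uniform limits because the weak inequality is linear in $u$ and $u^m$. So the core is a viscosity computation for smooth, positive $v$.

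Fix $(x_0,s_0)$ and a smooth $h$ touching $w$ from below there. Let $y_0$ be a minimizing point, and put $\psi=\eps\phi$, so that $|\nabla\psi|\le\eps C_\phi\le 1/2$ and Lemma \ref{viscosity solution} applies.

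\emph{Boundary-minimum case.} Here $y_0=x_0-\psi(x_0)e$. Using the competitor map $y(x)$ from that lemma, $h(x,s)\le g(s)P(y(x),\zeta(s))$ with equality at $(x_0,s_0)$. Hence the time derivatives agree,
\[
h_s=g'P+g\zeta'P_t=B_0\eps g^2P+g^2(1+A_0\eps)\bigl[(m-1)P\Delta P+|\nabla P|^2\bigr].
\]
Lemma \ref{viscosity solution} also gives
\[
|\nabla h|^2=g^2|\nabla P|^2|e-\nabla\psi|^2,\qquad
\Delta h\le g\bigl(|e-\nabla\psi|^2\Delta P+\eps C_\phi|\nabla P|\bigr).
\]
Since $|e-\nabla\psi|^2\le1+2\eps C_\phi+\eps^2C_\phi^2\le 1+A_0\eps$, we can compare $h_s$ with $(m-1)gP\Delta h+|\nabla h|^2$ term by term.
\begin{itemize}
\item The gradient terms leave $g^2|\nabla P|^2\bigl(1+A_0\eps-|e-\nabla\psi|^2\bigr)\ge0$.
\item The Laplacian terms leave $g^2(m-1)P\Delta P\bigl(1+A_0\eps-|e-\nabla\psi|^2\bigr)$. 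The coefficient lies in $[0,2A_0\eps]$ and $\Delta P\ge-K_\delta$, so this is at least $-2(m-1)K_\delta A_0\eps g^2P$.
\item The error from $\Delta\psi$ and $|\nabla\psi|^2/\psi$ is at least $-(m-1)g^2PC_1C_\phi\eps$.
\end{itemize}
All three losses are absorbed by $B_0\eps g^2P$, by the choice of $B_0$. The condition $B_0\eps\tau\le1/2$ only guarantees that $g$ stays bounded (by $2$) on $[0,\tau]$.

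\emph{Interior-minimum case.} Here $\nabla h=0$ and $\Delta h\le0$, so it suffices to have $h_s\ge0$. This holds because $y_0$ is an interior spatial minimum of $P$. There $\nabla P=0$ and $\Delta P\ge0$, so $P_t\ge0$, and also $g'\ge0$.

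The main obstacle is justifying the passage from the inf-convolution viscosity inequality to a weak supersolution of the degenerate equation \eqref{pme u}. This includes the points where $W$ vanishes and the points where $v$ is not twice differentiable at the minimizer. I expect to handle both through the positive smooth approximation described above, so the delicate step is checking that the bounds $|\nabla v_j|\le C_1$ and $\Delta v_j\ge -K_\delta$ persist up to $o(1)$ on $\mathcal C$ shifted by $\eps r\mu$ and enlarged by $\eps\phi$.
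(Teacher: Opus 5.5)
Your proposal is correct and follows essentially the same route as the paper. Both arguments approximate $v$ by smooth positive $v_j$ and apply Lemma \ref{viscosity solution} with $\psi=\eps\phi$, with the competitor map frozen in time to obtain the time derivative. Both then absorb the losses $2(m-1)K_\delta A_0\eps g^2v_j$ and $(m-1)C_1C_\phi\eps g^2 v_j$ into $B_0\eps g^2v_j$, treat interior minima via $\nabla v_j=0$ and $\Delta v_j\ge 0$, and pass to the limit. You are only slightly more explicit than the paper in two places: the viscosity-to-weak step, and the observation that the $o(1)$ errors in $C_1$ and $K_\delta$ are absorbed by the leading $1$ in $B_0$.
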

\begin{proof}
	First, approximate $v$ with a sequence of the strictly positive smooth function $v_j$ with $|\nabla v_j|\leq C_1$, $\left|(v_j)_t\right|\leq C_1$, $\Delta v_j\geq -K_{\delta}$. Define $w_j$ by the same formula. Then we show $w_j$ is the supersolution.
	
	For the boundary minimum point, apply Lemma \ref{viscosity solution} with $\psi= \eps\phi$. Then in the sense of distributions
	\begin{align*}
		\label{w space}
		\tag{5.5}
		\nabla w_j=g\left|\nabla v_j\right|\left(e-\eps\nabla\phi\right),\qquad \Delta w_j\leq g\left|e-\eps \nabla \phi\right|^2\Delta v_j+g\left|\nabla v_j\right|\eps C_{\phi}.
	\end{align*}
Here, although $w_j$ is only Lipschitz, we can modify $w_j$ and pass to the limit in the integral inequality. To obtain the time derivative, use the minimum point map $y(x)$ in Lemma \ref{viscosity solution} and keep it fixed as time varies. The smooth function $g(s)v_j \left( y(x)+\eps r\mu, \zeta(s)\right)$ lies above $w_j$ and agrees at contact point.  Derivatives gives in the sense of distributions
\begin{align*}
	\label{w time}
	\tag{5.6}
 (w_j)_s=Dg v_j+gD\zeta(v_j)_t.
\end{align*}
Using \eqref{pme v}, \eqref{w space} and \eqref{w time}, we obtain
\begin{align*}
	&(w_j)_s-(m-1)w_j\Delta w_j-\left|\nabla w_j\right|^2\\
	\geq&Dgv_j+gD\zeta(v_j)_t-(m-1)g^2v_j\left(|e-\eps\nabla \phi|^2\Delta v_j+\left|\nabla v_j\right|\eps C_{\phi}\right)-g^2\left(e-\eps\nabla\phi\right)^2\left|\nabla v_j\right|^2\\
	=&Dgv_j+g\left(D\zeta-g\left|e-\eps\nabla\phi\right|^2\right)\left[(m-1)v_j\Delta v_j+\left|\nabla v_j\right|^2\right]-(m-1)g^2v_j\left|\nabla v_j\right|\eps C_{\phi}.
\end{align*}
Since $\left|e-\eps\nabla\phi\right|^2= 1+2\eps e\cdot \nabla\phi +\eps^2 \left|\nabla\phi\right|^2\in\left[1-A_0\eps,1+A_0\eps\right] $, we have
\begin{align*}
	D\zeta-g\left|e-\eps\nabla\phi\right|^2=g\left(1+A_0\eps-\left|e-\eps\nabla\phi\right|^2\right)\in\left[0,2gA_0\eps\right].
\end{align*}
Therefore, using $\left|\nabla v_j\right|\leq C_1$, $\Delta v_j\geq -K_{\delta}$ and $Dg=B_0\eps g^2$
\begin{align*}
	&(w_j)_s-(m-1)w_j\Delta w_j-\left|\nabla w_j\right|^2\\
	\geq&DgV_j+gD\zeta(v_j)_t-(m-1)g^2v_j\left(|e-\eps\nabla \phi|^2\Delta v_j+\left|\nabla v_j\right|\eps C_{\phi}\right)-g^2\left(e-\eps\nabla\phi\right)^2\left|\nabla v_j\right|^2\\
	=&DgV_j+g\left(D\zeta-g\left|e-\eps\nabla\phi\right|^2\right)\left[(m-1)v_j\Delta v_j+\left|\nabla v_j\right|^2\right]-(m-1)g^2v_j\left|\nabla v_j\right|\eps C_{\phi}\\
	\geq&B_0\eps g^2v_j-2(m-1)\eps g^2A_0v_jK_{\delta}-(m-1)\eps g^2v_jC_1 C_{\phi}\\
	=&\eps g^2 v_j\left[B_0-2(m-1)A_0K_{\delta}-(m-1)C_1C_{\phi}\right]\\
	=&\eps g^2 v_j\geq 0.
\end{align*}

For the interior minimum point, at $x=y_0$, we have
\begin{align*}
	(v_j)_t=(m-1)v_j\Delta v_j+\left|\nabla v_j\right|^2\geq0.
\end{align*}
Hence Lemma \ref{viscosity solution} gives in the sense of distributions
\begin{align*}
	(w_j)_s-(m-1)v_j\Delta v_j-\left|\nabla v_j\right|^2\geq Dg v_j+g D\zeta (v_j)_t\geq 0.
\end{align*}
Consequently, $w_j$ is the supsolution.

Finally, since $\left|\nabla w_j\right|\leq gC_1\left(1+\eps C_{\phi}\right)\leq C$, we have
\begin{align*}
	\left|\nabla\left( w_j^{\frac{m}{m-1}}\right)\right|\leq  Cw_j^{1/(m-1)}\left|\nabla w_j\right|\leq C.
\end{align*}
Then the weak compactness, weak convergence and uniform convergence show that $\left(\frac{m-1}{m}w\right)^{1/(m-1)}$ is the continuous weak supersolution of \eqref{pme u}.
\end{proof}

Now, we construct a supsolution and use Proposition \ref{CF prop} to prove Theorem \ref{thm low grad}.
\begin{proof}[Proof of Theorem \ref{thm low grad}]
	For fixed compact subset $\mathcal{K}$, choose $\tau>0$ small enough such that for any $(q,t_0)\in\mathcal{K}$, $t_0-\tau\in \left[-1+3\delta,0\right]$ and
	\begin{align*}
		\label{tau}
		\tag{5.7}
		(m-1)K_{\delta}\tau\leq \frac14.
	\end{align*} 
	For fixed $(q,t_0)\in\mathcal{K}$ and above $\tau$, Theorem \ref{CF thm} gives a $r_q$ such that
	\begin{align*}
		v(x,t_0-\tau)\equiv0\quad\text{on $B_{r_q}(q)$}.
	\end{align*}
	The compactness of $\mathcal{K}$ gives a common $r>0$ such that for any $(q,t_0)\in\mathcal{K}$
	\begin{align*}
		v(x,t_0-\tau)\equiv0 \quad\text{on $B_{r}(q)$}.
	\end{align*}
	Also, we choose $r$ small enough such that $B_{r}(q)\subset B_{1-\delta}$ for any $(q,t_0)\in\mathcal{K}$.

	With above setting, now fix $(q,t_0)\in\mathcal K$ and translate it to $(0,\tau)$. Set
	\begin{align*}
		x^*=-\frac{r}{5}\mu,\qquad r^*=\frac{r\sin \theta}{10},\qquad\mathcal{C}= B_{r/2}(x^*)\backslash B_{r^*}(x^*).
	\end{align*}
	Obviously, origin $0\in \mathcal{C}$. Define
	\begin{align*}
		\alpha=\frac{\log\left(16/\sin \theta\right)}{\log (5/2)},\qquad \phi(x)=\frac{r\sin\theta}{4}\left(\frac{r}{2|x-x^*|}\right)^{\alpha}.
	\end{align*}
	On $\mathcal{C}$, function $\phi$ is smooth and positive. Moreover, it satisfies
	\begin{align*}
		\label{phi}
		\tag{5.8}
		\phi=&\frac{r\sin\theta}{4}\quad\text{on $\partial B_{r/2}(x^*)$},\qquad \phi(0)=4r,\qquad |\phi|\leq \frac{\sin \theta}{4}\left(\frac{5}{\sin\theta}\right)^{\alpha}r
	\end{align*}
	Choose $\eps>0$ satisfying
	\begin{align*}
		\label{range eps 1}
		\tag{5.9}
		\eps\leq \frac1{20},\qquad \eps\leq \frac{\sin\theta}{4},\qquad
		\eps\left(1+\frac{\sin \theta}{4}\left(\frac{5}{\sin\theta}\right)^{\alpha}\right)\leq \frac14.
	\end{align*}
	Then define
	\begin{align*}
		w(x,s)=g(s)\min_{|y-x|\leq \eps\phi(x)}v\left(y+\eps r \mu,\zeta(s)\right),
	\end{align*}
	where $g$, $\zeta$ are given in Lemma \ref{supsolution} and $\eps$ also satisfy the range in Lemma \ref{supsolution}. By \eqref{range eps 1}, for all $x\in\mathcal{C}$
	\begin{align*}
		|x|\leq \frac{7}{10}r,\qquad\left|y+\eps r\mu\right|\leq \left|y-x\right|+|x|+\eps r\leq \frac{19}{20}r.
	\end{align*}
	Therefore, $y+\eps r\mu\in B_{r}$ and we still have all estimates in our assumptions. Recall that $D\zeta\geq 2$ and $\zeta(0)=0$, then there exists a unique $0<s_{\tau}<\tau$ such that $\zeta(s_{\tau})=\tau$. Then Lemma \ref{supsolution} gives that $w$ is the supsolution on $\mathcal{C}\times[0,s_{\tau}]$. To use comparison principle, we check the order $w\geq v$ on the parabolic boundary.
	
	For initial time $s=0$, since $v(x,0)\equiv0$ on $B_r$, we know $w\geq0=v$.
	
	For the inner lateral boundary $(x,s)\in \partial B_{r^*}(x^*)\times[0,s_{\tau}]$, since $r^*< \sin\theta |x^*|$, we have  $-x\in \mathcal{C}_{\mu,\theta}$. Then local cone condition gives
	\begin{align*}
		v(x,s_{\tau})\leq v(0,s_{\tau})=0\quad\text{for $x\in\partial B_{r^*}(x^*)$}.
	\end{align*}
	Then Proposition \ref{local AB prop} implies above inequality holds for all $s\in[0,s_{\tau}]$. Therefore, $w\geq 0= v$ on $\partial B_{r^*}(x^*)\times[0,s_{\tau}]$.
	
	For the outer lateral boundary $(x,s)\in\partial B_{r/2}(x^*)\times[0,s_{\tau}]$, \eqref{phi} gives
	\begin{align*}
		\eps\phi=\frac{\sin\theta}{4}\eps r\leq \sin\theta \eps r|\mu|.
	\end{align*}
	Hence, $y+\eps r\mu-x\in\mathcal{C}_{\mu,\theta}$ for all $x\in\partial B_{r/2}(x^*)$ and $|y-x|\leq \eps\phi$ and then
	\begin{align*}
		\min_{|y-x|\leq \eps\phi(x)}v\left(y+\eps r \mu,\zeta(s)\right)\geq v\left(x,\zeta(s)\right)\quad\text{for $x\in\partial B_{r/2}(x^*)$}.
	\end{align*}
	Since $D\zeta\geq 2$ and $\zeta(0)=0$, we have $\zeta(s)>s$. Proposition \ref{local AB prop} then gives
	\begin{align*}
		v\left(x,\zeta(s)\right)\geq e^{-(m-1)K_{\delta}\left(\zeta(s)-s\right)} v(x,s).
	\end{align*}
	Combining these inequalities we have
	\begin{align*}
		w(x,s)\geq g(s)e^{-(m-1)K_{\delta}\left(\zeta(s)-s\right)} v(x,s)\quad\text{on $\partial B_{r/2}(x^*)\times[0,s_{\tau}]$}.
	\end{align*}
	Now we claim that $g(s)e^{-(m-1)K_{\delta}\left(\zeta(s)-s\right)}\geq 1$, for $s\in[0,s_{\tau}]$. Define
	\begin{align*}
		\Phi(s)=\log g(s)-(m-1)K_{\delta}\left(\zeta(s)-s\right).
	\end{align*}
	Then using the definition in Lemma \ref{supsolution} and \eqref{tau} we have
	\begin{align*}
		D\Phi=&\frac{Dg}{g}-(m-1)K_{\delta}\left(D\zeta-1\right)\\
		=&B_0\eps g-(m-1)K_{\delta} \left(A_0\eps g+B_0\eps sg\right)\\
		=&\eps g\left[B_0-(m-1)K_{\delta}\left(A_0+B_0s\right)\right]\\
		\geq&\eps g\left[\frac{3}{4}B_0-(m-1)K_{\delta}A_0\right]\\
		\geq&0.
	\end{align*}
	Since $\Phi(0)=0$, we prove the claim. Therefore, $w\geq v$ on $\partial B_{r/2}(x^*)\times[0,s_{\tau}]$.
	
	Since we have checked that $w\geq v$ on the parabolic boundary, then we obtain $w\geq v$ on $\mathcal{C}\times[0,s_{\tau}]$.  Consider $s=s_{\tau}$ and $x\in B_{\eps r}$. Then \eqref{range eps 1} gives that
	\begin{align*}
		\eps r\leq \frac{1}{20}r\leq \frac{1}{2}r-|x^*|=\frac{3}{10}r,\qquad \eps r\leq \frac{1}{20}r\leq \frac{1}{10}r\leq |x^*|-r^*.
	\end{align*}
	Hence $B_{\eps r}\subset\mathcal{C}$. Moreover, for $x\in B_{\eps r}$, assumptions in Lemma \ref{supsolution} and \eqref{phi} gives
	\begin{align*}
		\phi(x)\geq \phi(0)- C_{\phi}|x|\geq 4r- C_{\phi}\eps r> 3r.
	\end{align*}
	At this moment, consider $y^*=-\eps r\mu$, then
	\begin{align*}
		|y^*-x|=|x+\eps r \mu|\leq 2\eps r \leq \eps \phi(x).
	\end{align*}
	Therefore,
	\begin{align*}
		\label{initial v=0}
		\tag{5.10}
		0=g(s_{\tau})v\left(0,\tau \right)=g(s_\tau )v\left(y^*+\eps r \mu,\zeta(s_{\tau})\right)\geq w(x,s_{\tau})\geq v(x,s_{\tau})\quad\text{on $x\in B_{\eps r}$}.
	\end{align*}

	Now, we estimates $\tau-s_{\tau}$. Write
	\begin{align*}
		\tau-s_{\tau}=\int_{0}^{s_{\tau}}\left(D\zeta-1\right)ds=\int_{0}^{s_{\tau}}\eps g\left(A_0+B_0s\right)ds.
	\end{align*}
	Since $g(s)\leq 2$ for $s\in[0,s_{\tau}]$, we obtain
	\begin{align*}
		\tau-s_{\tau}\leq  2\tau \left(A_0+B_0\tau\right)\eps.
	\end{align*}
	
	Finally, we apply Proposition \ref{CF prop} to complete the proof with $x_0=0$, $t_0=s_{\tau}$, $R=\eps r$, $\sigma=\tau-s_{\tau}$. Here, we additionally require
	\begin{align*}
		2\tau \left(A_0+B_0\tau\right)\eps\leq \sigma_0.
	\end{align*}
	Then Proposition \ref{CF prop} and \eqref{initial v=0} gives
	\begin{align*}
		\frac{1}{|B_{\eps r}|}\int_{B_{\eps r}} v(x,\tau) dx> \frac{c_0 \eps ^2 r^2}{\tau-s_{\tau}}\geq C_2\eps.
	\end{align*}
	Otherwise, we must have $v(x,\tau)\equiv0$ on $B_{\eps r/6}$, which is a contradiction. Then the local cone condition and \eqref{range eps 1} give that
	\begin{align*}
		v\left(\frac{2}{\sin \theta} r \eps \mu,\tau \right)\geq \frac{1}{|B_{\eps r}|}\int_{B_{\eps r}} v(x,\tau) dx\geq C_2\eps.
	\end{align*}
	Since $\eps$ is arbitrarily small, we finish the proof with a new constant $C_2$.
	\end{proof}

Since we have prove the linear growth near free boundary, a standard rescaling argument combining with Harnack inequality in \cite{CVW87} gives the lower gradient estimate near free boundary. For the completeness, we give a proof here.
\begin{theorem}
	\label{thm low grad 2}
	Under the hypotheses of Theorem \ref{thm low grad}. There are constants $C_2>0$ and $\delta_*>0$, depend only on $n$, $m$, $\theta$, $C_1$, $K_{\delta}$, $\mathcal{K}$, such that for all $(q,t_0)\in\mathcal{K}$ we have
	\begin{align*}
		\left|\nabla v\right|\geq C_2\quad\text{on $\left(B_{\delta_*}(q)\times[t_0-\delta_*,t_0]\right)\cap \Omega$}.
	\end{align*}
\end{theorem}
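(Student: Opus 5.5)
The plan is to turn the linear growth of Theorem \ref{thm low grad} into a pointwise lower bound on $|\nabla v|$. We rescale around a positive point, pass to a limit, and use the gradient equation together with a Harnack inequality, as in \cite{CVW87}. Since $|D_{x'}v|\le L D_{x_n}v$ by the local cone condition (Lemma \ref{lem 3.1} in the limit), $|\nabla v|$ and $D_\mu v$ are comparable. It therefore suffices to bound the directional derivative $D_\mu v$ (or $D_{x_n}v$) from below.

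\textbf{Step 1: extend linear growth to a neighbourhood.} Shrinking $\mathcal K$ slightly and using compactness, the free boundary points $(q',t')$ with $q'\in B_{\delta_*}(q)$ and $t'\in[t_0-\delta_*,t_0]$ still lie in a compact set $\mathcal K'$ of the admissible region. Theorem \ref{thm low grad} then gives $v(q'+\eps\mu,t')\ge C_2\eps$ for all such points. For a positive point $(x,t)$ near $(q,t_0)$, let $d$ be the distance from $x$ to $\Gamma(t)$ along $-\mu$; this is comparable to $\operatorname{dist}(x,\Gamma(t))$ by the Lipschitz graph structure. Linear growth then gives $v(x,t)\ge c\,d$. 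Combined with $|\nabla v|\le C_1$, we get the two-sided bound $c\,d\le v\le C_1 d$ on a Lipschitz cone region near the boundary.

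\textbf{Step 2: rescale and apply Harnack.} Set $v_d(y,s)=d^{-1}v(x+dy,t+ds)$. This is again a solution of \eqref{pme v}, because the equation is invariant under $x\to dx$, $t\to ds$, $v\to v/d$. On a unit ball $B_{1/2}\times[-c_0,0]$ around the origin, $v_d$ is bounded between two positive constants. Here the lower bound in the past comes from the time-monotonicity of Proposition \ref{local AB prop}, and the upper bound from $v\le C_1 d$ together with $|v_t|\le C_1$. So on this ball the equation is uniformly parabolic with controlled coefficients, and interior Schauder estimates give uniform $C^{2,\alpha}$ bounds on $v_d$.

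\textbf{Step 3: lower bound on the derivative.} The function $p=D_\mu v_d\ge0$ solves the linear uniformly parabolic equation from the proof of Lemma \ref{lem 3.1}, with bounded coefficients. We need $\int$ of $p$ over some segment to be bounded below. Writing $v_d(0,0)-v_d(-\lambda\mu,0)$ as an integral of $p$ along the segment, with $\lambda<1$ chosen so that Step 1 makes the difference at least a positive constant, we get $\sup p\ge c$ on that segment at time $0$. Doing the same at an earlier time and applying the Harnack inequality for $p$ then gives $p(0,0)\ge c$. Rescaling back, $D_\mu v(x,t)\ge c$, and hence $|\nabla v|\ge C_2$.

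The main obstacle is the Step 3 difference estimate. The upper bound $v\le C_1 d$ and the lower bound $v\ge c\,d$ do not by themselves force a strict increase along the segment. I would first pick $\lambda$ with $C_1(1-\lambda)<c$, so that $v_d(-\lambda\mu)\le C_1(1-\lambda)<c\le v_d(0)$. This needs the rescaled distance at $-\lambda\mu$ to be $1-\lambda$, which holds when we measure $d$ along the $\mu$ direction. The Harnack step needs a time lag, which the time-shifted version of this argument provides.
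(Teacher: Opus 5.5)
Your overall strategy is the paper's. Linear growth from Theorem \ref{thm low grad} (applied on a slightly larger compact set $\mathcal K_1\Supset\mathcal K$, not a smaller one) gives two-sided bounds $c\,d\le v\le C_1d$. The rescaled solution is uniformly parabolic, $D_\mu v\ge 0$ solves a linear equation, and integrating $D_\mu v$ from a point near $\Gamma$ (where $v$ is small) to a point farther away (where $v\ge c\cdot$distance) gives a lower bound, which Harnack carries to the point. Your $1-\lambda$ plays the role of the paper's $k$ in $Q_h(y,s)$.

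However, two steps fail as written. \emph{First}, the lower bound for $v_d$ at earlier times cannot come from Proposition \ref{local AB prop}. The inequality $v(x,t)\ge e^{-(m-1)K_\delta(t-s)}v(x,s)$ bounds the \emph{later} value from below, so it only gives an \emph{upper} bound on the past. What you need is that the free boundary moves a controlled amount in time $c_0d$. The paper proves $0\le f(x',t)-f(x',s)\le C_1(s-t)$ via Proposition \ref{CF prop} and $|\nabla v|\le C_1$, and then applies linear growth at each earlier time. Alternatively, you can use the hypothesis $|v_t|\le C_1$ directly: $v(y,t-ds')\ge v(y,t)-C_1ds'$. In either version the admissible time window must be $\lesssim c\cdot(\text{rescaled distance to }\Gamma)/C_1$.

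\emph{Second}, your Step 2 gives uniform parabolicity only on $B_{1/2}\times[-c_0,0]$. That ball must in any case have radius $\lesssim 1/(1+L)$ to avoid $\Gamma$. Step 3 integrates along the segment from $0$ to $-\lambda\mu$ with $\lambda$ close to $1$, which is forced by $C_1(1-\lambda)<c$. This segment leaves that ball and ends at rescaled distance $1-\lambda$ from $\Gamma$. The point where $p$ is large may lie near $-\lambda\mu$, and Harnack can transport that value to $(0,0)$ only if the equation is uniformly parabolic on a space-time neighbourhood of the whole segment. So you must run Step 2 on the region at rescaled distance $\gtrsim 1-\lambda$ from $\Gamma$. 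There the ellipticity is only $\sim c(1-\lambda)$, the time lag is $c_0\lesssim c(1-\lambda)/C_1$ (by the previous point), and the Harnack-chain constants depend on $\lambda$. This is fine because $\lambda=\lambda(c,C_1)$ is fixed, just as the paper lets its constants depend on $k$. With these two repairs, your argument goes through.
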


\begin{proof}
	The local cone condition gives that $v(x,t)$ is non‑decreasing along the direction $\mu$ for all $(x,t)\in B_{1-\delta}\times[-1+3\delta,0].$ Choose coordinates so that $e_n=\mu$ and write the free boundary as $x_n = f(x',t)$, with $x'=(x_1,\dots,x_{n-1})$.
	
	For fixed $\mathcal K$ we can find a compact set $\mathcal{K}_1$ such that $\mathcal K\Subset\mathcal{K}_1$. For any $(q,t_0)\in\mathcal{K}$, choose positive constant $\delta_*\ll \eps_*$, where $\eps_*$ is given in Theorem \ref{thm low grad} with the compact set $\mathcal{K}_1$, such that $B_{\delta_*}(q)\times[t_0-\delta_*,t_0]\subset \mathcal{K}_1$.  Then using Theorem \ref{thm low grad} for $\mathcal{K}_1$ we obtain for any $0<\delta<\delta_*$
	\begin{align*}
		\label{low bdd}
		\tag{5.11}
		v\left(x+\delta\mu,t\right)\geq C_2\delta\quad\text{on $\left(B_{\delta_*}(q)\times[t_0-\delta_*,t_0]\right)\cap \Gamma$}
	\end{align*}
	Fix a point $(y,s)\in \left(B_{\delta_*}(q)\times[t_0-\delta_*,t_0]\right)\cap \ome$. Then we can select a small $h>0$, depending on $(y,s)$, such that
	\begin{align*}
		\label{y_n}
		\tag{5.12}
		\frac{h}{2}+2kh\leq y_n-f(y',s)\leq \frac{h}{2}+4kh,
	\end{align*}
	where $k>0$ is a small constant to be fixed later. Consider the region
	\begin{align*}
		Q_h(y,s):=\left\{(x,t):\left|\left(x_n-f(x',s)\right)-\left(y_{n}-f(y',s)\right)\right|\leq \frac h2,\  |x'-y'|\leq \frac h2,\  t\in\left[s-\sigma h,s\right]\right\},
	\end{align*}
	where $\sigma>0$ is a small constant to be fixed later. For $\delta_*$ sufficiently small, $Q_h(y,s)$ is contained in $B_{1-\delta}\times[-1+3\delta,0]$. Therefore, we have 
	\begin{align*}
		\label{esti in Qh}
		\tag{5.13}
		\left|\nabla v\right|\leq C_1,\qquad|v_t|\leq C_1 \quad\text{on $Q_{h}(y,s)$}.
	\end{align*}
	
	We can choose the constant $\sigma>0$ small enough so that
	\begin{align*}
		\label{choose s}
		\tag{5.14}
		0\leq f(x',t)-f(x',s)\leq kh\quad\mbox{for all $\dis \left|x'-y'\right|\leq \frac{h}{2}$, $t\in\left[s-\sigma h,s\right]$}.
	\end{align*}
	Indeed, we claim 
	\begin{align*}
		f(x',t)-f(x',s)\leq C_1\left(s-t\right)\quad\text{for $s-t\geq0$ small enough}.
	\end{align*}
	To prove above claim, in Proposition \ref{CF prop}, let $(X,s)\in\Gamma$, $t_0=t$, $\sigma=s-t$, then since $\left|\nabla v\right|\leq C_1$,
	\begin{align*}
		\frac{1}{\left|B_{R}(X)\right|}\int_{B_R(X)}v(x,s)dx\leq C_1R.
	\end{align*}
	If $s-t=c_0R/C_1$ is small enough, then we must have
	\begin{align*}
		v(\cdot,t)\not\equiv0\quad\text{on $B_R(X)$}.
	\end{align*}
	Thus we can find a boundary point $(Y,t)\in\Gamma$ with $|Y-X|\leq R$. Since $|D_{x'}f|\leq L$, we obtain
	\begin{align*}
		\left|f(X',t)-f(X',s)\right|=&\left|f(X',t)-f(Y',t)\right|+\left|f(Y',t)-f(X',s)\right|\\
		\leq &\left(L+1\right)R=\frac{C_1\left(L+1\right)}{c_0}(s-t)=: C_1(s-t),
	\end{align*}
	for a new constant $C_1$. Since $R$ is arbitrary, we prove the claim. Immediately, we have
	\begin{align*}
		f(x',t)-f(x',s)\leq C_1(s-t)\leq C_1\sigma h\quad\mbox{for all $\dis \left|x'-y'\right|\leq \frac{h}{2}$, $t\in\left[s-\sigma h,s\right]$},
	\end{align*}
	and it suffices to choose $\sigma$ with $C_1 \sigma \le k$.
	
	We now establish two‑sided bounds for $v$ on $Q_h(y,s)$:
	\begin{align*}
		\label{two bdd}
		\tag{5.15}
		C_2 kh\leq v(x,t)\leq C_1(4k+1)h\quad\mbox{on $Q_h(y,s)$}.
	\end{align*}
	For any $(x,t)\in Q_h(y,s)$ we write
	\begin{align*}
		x_n-f(x',t)=x_n-f(x',s)+f(x',s)-f(x',t).
	\end{align*}
	Using \eqref{y_n} and $\eqref{choose s}$, we obtain on $Q_h(y,s)$
	\begin{align*}
		x_n-f(x',t)\leq x_n-f(x',s)\leq y_n-f(y',s)+\frac h2 \leq \left(4k+1\right)h,
	\end{align*}
	and 
	\begin{align*}
		x_n-f(x',t)\geq x_n-f(x',s)-kh\geq y_n-f(y',s)-\frac h2-kh \geq kh.
	\end{align*}
	Therefore, \eqref{low bdd} gives $v(x,t) \ge C_2 k h$ on $Q_h(y,s)$, if necessary we can choose a bigger $\mathcal{K}_1$. Moreover, \eqref{esti in Qh} gives
	\begin{align*}
		v(x,t)=v(x',x_n,t)-v\left(x',f(x',t),t\right)\leq \left|\nabla v\right|\left|x_n-f(x',t)\right|\leq C_1(4k+1)h\quad\mbox{on $Q_h(y,s)$}.
	\end{align*}
	Therefore, we obtain \eqref{two bdd}.

	Introduce the rescaled function
	\begin{align*}
		v_h(x,t)=\frac1h v(y+hx,s+ht).
	\end{align*}
	Then $v_h$ satisfies equation \eqref{pme v} in the region
	\begin{align*}
		Q:=\left\{(x,t):\left|x_n-g(x')\right|\leq \frac 12,\ |x'|\leq \frac 12,\ t\in\left[-s,0\right]\right\},
	\end{align*}
	where $\dis g(x')=\frac1 h\left(f(y'+hx',s)-f(y',s)\right)$. By construction,
	\begin{align*}
		D_{x_n}v_h(x,t)=D_{x_n} v(y+hx,s+ht),
	\end{align*}
	and \eqref{two bdd} gives
	\begin{align*}
		C_2k\leq v_h(x,t)\leq C_1(4k+1) \quad\mbox{on $Q$}.
	\end{align*}
	Thus $v_h$ is a solution of a non‑degenerate parabolic equation on $Q$. Schauder estimates on a bigger domain imply that $v_h$ is smooth with second derivatives bounded in terms of $k$, $C_1$, $C_2$, and consequently $\dis D_{x_n}v_h$ is a non‑negative regular solution of a non‑degenerate parabolic equation with smooth coefficients bounded in terms of the same constants. Standard Harnack inequality yields a constant $\lambda>0$, depending only on $k$, $C_1$ and $C_2$, such that
	\begin{align*}
		D_{x_n}v_h(x,t)\leq \lambda D_{x_n}v_h(0,0)\quad \mbox{on $Q$},
	\end{align*}
	Averaging over $Q$ gives
	\begin{align*}
		\Lambda:=\frac{1}{|Q|}\iint_{Q}D_{x_n}v_h(x,t)\,dx\,dt\leq \lambda D_{x_n}v_h(0,0).
	\end{align*}

	We now show that, provided $k$ is chosen small enough, there exists a new constant $C_2>0$ depending only on $C_1$ and $C_2$ such that
	\begin{align*}
		D_{x_n}v_h(0,0)=D_{x_n}v(y,s)\geq C_2.
	\end{align*}
	Set
	\begin{align*}
		l_1&:=y_{n}-f(y',s)+f(x',s)-\frac h2,\\
		l_2&:=y_{n}-f(y',s)+f(x',s)+\frac h2.
	\end{align*}
	For $(x,t)\in Q_h(y,s)$ we have $l_1\leq x_n\leq l_2$. Integrate the derivative and use \eqref{low bdd} and \eqref{esti in Qh},
	\begin{align*}
		\label{integal low}
		\tag{5.16}
		\begin{split}
		\int_{l_1}^{l_2}D_{x_n}v(x',x_n,t)\,dx_n&=v(x',l_2,t)-v(x',l_1,t)\\
		&\geq C_2\left[l_2-f(x',t)\right]-C_1\left[l_1-f(x',t)\right].
		\end{split}
	\end{align*}
	From the earlier estimates \eqref{y_n} and \eqref{choose s},
	\begin{align*}
		\label{l1l2}
		\tag{5.17}
		\begin{split}
		l_2-f(x',t)&=y_{n}-f(y',s)+f(x',s)+\frac h2-f(x',t)\\
		&\geq (2k+1)h-kh=(k+1)h,\\
		l_1-f(x',t)&=y_{n}-f(y',s)+f(x',s)-\frac h2-f(x',t)\\
		&\leq 4kh.
		\end{split}
	\end{align*}
	Combining \eqref{integal low} and \eqref{l1l2} we obtain
	\begin{align*}
		\Lambda&=\frac{1}{|Q_h(y,s)|}\iint_{Q_h(y,s)}D_{x_n}v\,dx\,dt\\
		&\geq\frac{1}{|Q_h(y,s)|}\iint_{Q_h(y,s)}h\left(C_2(k+1)-4C_1k\right)\,dx'\,dt\\	
		&\geq  C_2(k+1)-4C_1k.
	\end{align*}
	Now choose $k>0$ so small that
	\begin{align*}
		C_2(k+1)-4C_1k \ge \frac{C_2}{2}.
	\end{align*} 
	Then
	\begin{align*}
		D_{x_n}v(y,s)\geq \frac{\Lambda}{\lam}\geq \frac{C_2}{2\lam}.
	\end{align*}
	The local cone condition then gives
	\begin{align*}
		|\nabla v(y,s)|\geq \sqrt{1+L^2}D_{x_n}v(y,s)\geq C_2,
	\end{align*}
	for a new constant $C_2>0$. Since $(y,s)$ is arbitrary in $\left(B_{\delta_*}(q)\times[t_0-\delta_*,t_0]\right)\cap \ome$, this completes the proof.
\end{proof}



\section{Proof of main theorem}\label{s6}
In this section, we prove two main theorems and give the counterexample. 

\begin{proof}[Proof of Theorem \ref{thm 1.1}]
	For any $(q,t_0)\in\Gamma\cap \left(B_1\times(-1,0]\right)$ with $q\notin \overline{\ome(-1)}$, choose $\delta>0$ satisfying $q\in B_{1-2\delta}$ and $-1+4\delta\leq t_0$. Then Proposition \ref{local AB prop} gives that for any $\delta_0\leq \delta$,
	\begin{align*}
		\Delta v\geq -K_{\delta}\quad \text{in the sense of distributions $\quad$on $B_{\delta_0}(q)\times[t_0-\delta_0,t_0]$}.
	\end{align*}
	
First, we can use Proposition \ref{smooth appro} and Proposition \ref{local AB prop} to obtain a sequence of smooth $v_j$ with $\Delta v_j \geq -K_{\delta}$ on $B_{\delta_0}(q)\times[t_0-\delta_0,t_0]$ for sufficiently small $\delta_0$. Indeed, we can choose suitable $B_0^{n-1}$ and $F_-$ such that $B_{\delta}(q)\subset\left\{x'\in B_0^{n-1},\ F_-\leq x_n\leq f(x',Z_0,t)\right\}$, where $Z_0$ is give in Lemma \ref{lem 3.1}. Then choose $\delta_0$ small such that $B_{\delta_0}(q)\times[t_0-\delta_0,t_0]$ belongs to the domain in Proposition \ref{smooth appro}, and that Proposition \ref{local AB prop} can be applied.

Second, for such $v_j$ we want to apply Corollary \ref{coro up grad} to obtain the upper gradient estimate. The first step is to check that $H_1<\infty$ in Theorem \ref{thm up grad}. Indeed, for $z=Z$, $v$ and $v_j$ is smooth with uniform bound independent of $j$. Thus we have $H_1<\infty$. Then, fix suitable domain $\mathcal{D}$ and cut-off functions $\chi$, $\vartheta$ in Theorem \ref{thm up grad}. Choose $\delta_0$ such that $B_{\delta_0}(q)\times[t_0-\delta_0,t_0]\subset \mathcal D$ and $\chi=\vartheta=1$ for $x\in B_{\delta_0}(q)$. Then Corollary \ref{coro up grad}  gives that
	\begin{align*}
		\left|\nabla v_j\right|\leq C_1,\qquad \left|(v_j)_t\right|\leq C_1\quad\text{on $\left(B_{\delta_0}(q)\times\left[t_0-\delta_0,t_0\right]\right)\cap\ome$}.
	\end{align*}
 Then taking a limit $v_j\to v$ gives
\begin{align*}
	\label{v up grad}
	\tag{6.1}
	\left|\nabla v\right|\leq C_1,\qquad \left|v_t\right|\leq C_1\quad\text{on $\left(B_{\delta_0}(q)\times\left[t_0-\delta_0,t_0\right]\right)\cap\ome$}.
\end{align*}

Third, fix a compact set $\mathcal K$ in Theorem \ref{thm low grad 2} satisfying $(q,t_0)\in \mathcal{K}$. Choose $\delta_0$ such that $\delta_0\leq \delta_*$. Combining Theorem \ref{thm low grad 2} with \eqref{v up grad} gives
\begin{align*}
	\label{v low grad}
	\tag{6.2}
	\left|\nabla v\right|\geq C_2,\qquad D_{\mu}v\geq C_2\quad\text{on $\left(B_{\delta_0}(q)\times\left[t_0-\delta_0,t_0\right]\right)\cap\ome$}.
\end{align*}

Finally, applying Theorem \ref{thm tool} we make the proof with a new smaller $\delta_0$.
\end{proof}

Then we start to prove Theorem \ref{thm 1.2}. We assume that $\ome(0)$ is convex and prove a standard monotonicity property 
that implies the local cone condition. Also we prove that the non-degeneracy condition \eqref{ndc} force the free boundary to move immediately. We can then apply Theorem \ref{thm 1.1} to 
complete the proof of Theorem \ref{thm 1.2}. Although this monotonicity property has been proved in \cite{DHL01}, we give a proof here for the completeness.

For every point $x\in\R^n$, let $d(x)=\operatorname{dist}(x,\Omega(0))$ and $y\in\overline{\ome(0)}$ be the point such that $|y-x|=d(x)$. Since $\ome(0)$ is convex, $y$ is unique for each $x$. Define
\begin{align*}
	e(x):=\frac{x-y}{|x-y|}\quad\text{for $x\notin \overline{\ome(0)}$}.
\end{align*}
Then we prove the following monotonicity property.

\begin{proposition}
	\label{monoto}
	Suppose $\Omega(0)$ is convex. Then, for any set $U\Subset\left(\overline{\ome(0)}\right)^c$, there exists a constant $\theta\in(0,\pi/2]$, depending only on $U$ and $\Omega(0)$, such that if unit vector $e$ satisfies
	\begin{align*}
		\angle\left(e,e(x)\right)\leq \theta,
	\end{align*}
	then $v(x+\eps e,t)\leq v(x,t)$ for all $\eps>0$ small, $x\in U$ and $t\geq0$.
\end{proposition}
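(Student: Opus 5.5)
The plan is to use the Alexandrov reflection principle across hyperplanes that separate the convex set $\Omega(0)$ from points of $U$.

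Fix a unit vector $\nu$ and a number $s$ such that the half-space $\{x\cdot\nu> s\}$ does not meet $\overline{\Omega(0)}$. Let $P_{\nu,s}$ denote reflection across the hyperplane $\{x\cdot\nu=s\}$. Set
\begin{align*}
\tilde u(x,t)=u(P_{\nu,s}x,t)\quad\text{on }\{x\cdot\nu>s\}.
\end{align*}
Both $u$ and $\tilde u$ solve \eqref{pme u} on the half-space. On the hyperplane they coincide. At $t=0$ we have $u_0=0\le\tilde u(\cdot,0)$ there. The comparison principle for the porous medium equation on a half-space, for bounded compactly supported continuous weak solutions, then gives
\begin{align*}
u(x,t)\le u(P_{\nu,s}x,t)\quad\text{for } x\cdot\nu>s,\ t\ge0.
\end{align*}
The same inequality holds for $v$. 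So for any $x$ with $x\cdot\nu\ge s$ and any $\varepsilon>0$, reflecting the point $x+\varepsilon\nu$ across the hyperplane through its midpoint with $x$ yields $v(x+\varepsilon\nu,t)\le v(x,t)$. This holds whenever that midpoint hyperplane $\{y\cdot\nu=x\cdot\nu+\varepsilon/2\}$ misses $\overline{\Omega(0)}$. It suffices that
\begin{align*}
x\cdot\nu\ \ge\ h(\nu):=\sup_{y\in\Omega(0)}y\cdot\nu,
\end{align*}
where $h$ is the support function.

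The next step is to convert this into the cone statement. Let $x\in U$ and let $y=y(x)$ be its nearest point in $\overline{\Omega(0)}$. By convexity, $x\cdot e(x)-h(e(x))=d(x)\ge d_U:=\operatorname{dist}(U,\Omega(0))>0$, since $U\Subset(\overline{\Omega(0)})^c$. The function $h$ is Lipschitz with constant $R:=\sup_{y\in\Omega(0)}|y|$, finite because $\Omega(0)$ is bounded. Hence for any unit $e$,
\begin{align*}
x\cdot e-h(e)\ \ge\ d(x)-\bigl(|x|+R\bigr)|e-e(x)|.
\end{align*}
Since $U$ is bounded, say $|x|\le R_U$, this quantity is positive once $|e-e(x)|<d_U/(R_U+R)$. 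Choose $\theta\in(0,\pi/2]$ with $2\sin(\theta/2)\le d_U/(2(R_U+R))$. Then every unit $e$ with $\angle(e,e(x))\le\theta$ satisfies $x\cdot e>h(e)$. Applying the reflection step with $\nu=e$ gives $v(x+\varepsilon e,t)\le v(x,t)$ for all $\varepsilon>0$, not just small ones, and all $t\ge0$. The constant $\theta$ depends only on $U$ and $\Omega(0)$.

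The main obstacle is justifying the comparison principle on the unbounded half-space for continuous weak solutions, where the boundary data agree and only the initial data are ordered. I would handle it by noting that both solutions are compactly supported for all finite time by finite propagation. One can then compare on a large half-ball, or on the bounded region $\{x\cdot\nu>s\}\cap B_\rho$ with $\rho$ beyond both supports, where both functions vanish on the spherical part. The standard $L^1$-contraction comparison for weak solutions (as in \cite{V07}) then applies. Alternatively, approximate $u_0$ by positive smooth data and pass to the limit.
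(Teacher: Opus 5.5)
Your proposal is correct and follows essentially the same route as the paper: Alexandrov reflection across hyperplanes $\{y\cdot e=\text{const}\}$ that separate $\overline{\Omega(0)}$ from the relevant points, combined with the geometric estimate $\inf_{z\in\overline{\Omega(0)}}(x-z)\cdot e\geq d(x)-\sup_{z}|x-z|\,|e-e(x)|>0$ for $e$ close to $e(x)$. The differences are only in presentation. You package the geometric step through the Lipschitz support function $h$ rather than the paper's two-case split, and your midpoint reflection gives $v(x+\varepsilon e,t)\le v(x,t)$ directly and for every $\varepsilon>0$, whereas the paper reflects $x$ itself and concludes the equivalent form $v(x-\varepsilon e,t)\ge v(x,t)$. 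You also justify the half-space comparison principle via finite propagation, which the paper simply invokes.
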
 
\begin{proof}
	Since $U\Subset\left(\overline{\ome(0)}\right)^c$, for all $x\in U$, there exists constant $d_0>0$ such that $d(x)\geq d_0$. For every $z\in\overline{\ome(0)}$, since $\ome(0)$ is convex,
	\begin{align*}
		(x-z)\cdot e(x)=d(x)+(y-z)\cdot e(x)\geq d(x)\geq d_0.
	\end{align*}
Then, for any unit vector $e$ satisfying
	\begin{align*}
		\angle\left(e,e(x)\right)\leq \theta,
	\end{align*}
where $\theta\in(0,\pi/2]$ is a constant to be chosen later, we have
\begin{align*}
	(x-z)\cdot e\geq d(x)-|x-z||e-e(x)|\quad\text{for all $z\in\overline{\ome(0)}$ and $x\in U $}.
\end{align*}
When $|x-z|\leq 2d(x)$,
\begin{align*}
	(x-z)\cdot e\geq d(x)-2d(x)|e-e(x)|\quad\text{for all $z\in\overline{\ome(0)}$ and $x\in U$}.
\end{align*}
We can choose $\theta$ small enough such that
\begin{align*}
	|e-e(x)|\leq \frac{1}{4}.
\end{align*}
Then we obtain
\begin{align*}
	(x-z)\cdot e\geq \frac{d(x)}{2}\geq\frac{d_0}{2}\quad\text{for all $z\in\overline{\ome(0)}$ and $x\in U$}.
\end{align*}
When $|x-z|\geq 2d(x)$, then there exists constant $M>0$, depending only on $\ome(0)$, such that $|x-z|\leq M$. Thus we obtain
\begin{align*}
	(x-z)\cdot e\geq d_0-M|e-e(x)|\quad\text{for all $z\in\overline{\ome(0)}$ and $x\in U$}.
\end{align*}
We can choose $\theta$ such that
\begin{align*}
	|e-e(x)|\leq \frac{d_0}{2M}.
\end{align*}
Then
\begin{align*}
		(x-z)\cdot e\geq\frac{d_0}{2}\quad\text{for all $z\in\overline{\ome(0)}$ and $x\in U$}.
\end{align*}

Choose a small $\delta>0$ such that
\begin{align*}
	\label{xcdote}
	\tag{6.3}
	x\cdot e -\delta\geq z\cdot e\quad\text{for all $z\in\overline{\ome(0)}$ and $x\in U$}.
\end{align*}
Set $\mathcal{H}_e:=\left\{y:y\cdot e\geq x\cdot e-\delta\right\}$ and $R_e$ denotes the reflection across $\partial \mathcal H _e$. Then \eqref{xcdote} gives
\begin{align*}
	v(y,0)=0\leq v(R_{e}y,0)\quad\text{for all $y\in\mathcal H_e$}.
\end{align*}
Since $v(y,t)=v(R_ey,t)$ for all $y\in\partial \mathcal H_e$ and $t\geq0$ and $v(R_e y,t)$ satisfies equation \eqref{pme v}, comparison principle gives
\begin{align*}
	v(R_e y,t)\geq v(y,t) \quad\text{for all $y\in\mathcal H_e$ and $t\geq0$}.
\end{align*}
Let $y=x$ and $\delta$ be arbitrarily small we obtain
\begin{align*}
	v(x-\eps e,t)\geq v(x,t)\quad\text{for small $\eps>0$, $x\in U$ and $t\geq0$}.
\end{align*}
\end{proof}

\begin{proposition}
	\label{ndc force move}
	Suppose $v_0\in C^1(\overline{\ome(0)})$ satisfies non-degeneracy condition \eqref{ndc}. Then
	\begin{align*}
		\overline{\ome(0)}\Subset \ome(t) \quad\text{for every $t>0$.}
	\end{align*}
\end{proposition}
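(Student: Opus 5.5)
We show that every point $x_0\in\partial\Omega(0)$ lies in the interior of $\Omega(t)$ for each $t>0$; compactness of $\partial\Omega(0)$ and the monotonicity of supports in time then give $\overline{\ome(0)}\Subset\ome(t)$. Two facts are used throughout. First, positivity is preserved: by Proposition \ref{AB prop}, $v(x,t)\ge (s/t)^{\frac{n(m-1)}{n(m-1)+2}}v(x,s)$, so $\ome(s)\subset\ome(t)$ for $s\le t$. Second, interior points of $\ome(0)$ stay positive, again by Proposition \ref{AB prop} applied as $s\downarrow0$ together with continuity of $v$ at $t=0$. So the only issue is the boundary points of $\ome(0)$.

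Fix $x_0\in\partial\Omega(0)$. Since $v_0(x_0)=0$, condition \eqref{ndc} gives $|\nabla v_0(x_0)|\ge c$. Because $v_0\ge0$ and vanishes at $x_0$, its gradient there (as a $C^1$ function on the closure) must point inward. Hence near $x_0$ we have $v_0(x)\ge \frac c2\,(x-x_0)\cdot\nu$ on a small cone or half-ball region $\{(x-x_0)\cdot\nu\ge |x-x_0|\cos\beta,\ |x-x_0|<\rho\}$, where $\nu=\nabla v_0(x_0)/|\nabla v_0(x_0)|$. Thus the initial pressure has linear growth of slope about $c$ into $\Omega(0)$.

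Next I compare with an explicit subsolution that starts below $v_0$ and immediately pushes past $x_0$. The natural candidate is a truncated traveling wave $W(x,t)=\big(a(t-t_1)+ a^{-1}\cdot\text{stuff}\big)$; more concretely, I would use $W(x,t)=\lambda\big((x-x_0)\cdot\nu+\lambda t-h\big)_+$, possibly localized by a Barenblatt-type or compactly supported cap. This $W$ is the exact planar traveling wave of \eqref{pme v} with speed $\lambda$. Because a half-plane wave is not compactly supported, I would instead use a small Barenblatt profile $\mathcal B$ centered at an interior point $x_0+\rho'\nu$, whose initial pressure lies below $v_0$ thanks to the linear lower bound. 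In fact, the simplest route is to use a Barenblatt solution $B(x,t+T)$ with support a ball $B_{r}(x_0+r\nu)$ touching $\partial\Omega(0)$ only near $x_0$, with pressure at $t=0$ equal to $\kappa(r^2-|x-x_0-r\nu|^2)_+$. For $\kappa$ small, the linear bound and \eqref{ndc} ensure this sits below $v_0$. The comparison principle then gives $v\ge B$, and the Barenblatt support strictly expands for $t>0$. This only shows that the point $x_0+0\cdot\nu$ becomes interior if the ball contains $x_0$ in its closure. Its support at time $t$ is $B_{r(t)}(x_0+r\nu)$ with $r(t)>r$, which contains $x_0$. Hence $v(x_0,t)>0$ for all $t>0$, and by continuity the same holds in a neighborhood.

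The main obstacle is fitting such a ball inside $\ome(0)$ touching $x_0$ with the pressure below $v_0$, since $\partial\Omega(0)$ may not satisfy an interior ball condition at $x_0$. I would handle this using the $C^1$ linear lower bound: $v_0\ge \frac c2 (x-x_0)\cdot\nu$ on a cone region. Then choose $r$ tiny and $\kappa$ small so that the paraboloid cap $\kappa(r^2-|x-x_0-r\nu|^2)_+\le 2\kappa r\,(x-x_0)\cdot\nu$ fits under it on the ball. The cap is supported in the ball, and the ball lies inside the cone only up to a tangency issue at $x_0$. If the cone does not contain the ball, I would replace the ball by one centered deeper, with its support reaching $x_0$ after a short time $t_1$, and use uniform lower bounds on $v_0$ in a neighborhood to make $t_1$ arbitrarily small as $\kappa$ is adjusted. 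Uniformity in $x_0$ then follows from compactness and continuity of $\nabla v_0$, which completes the proof.
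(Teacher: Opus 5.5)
Your overall plan matches the paper's: interior points via Proposition~\ref{AB prop}, and boundary points via a Barenblatt subsolution placed under $v_0$ whose support expands past $x_0$. However, the argument you actually carry out fails, and the one that works is only named, without its key estimate.

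The tangent ball $B_r(x_0+r\nu)$ is tangent to the hyperplane $\{(x-x_0)\cdot\nu=0\}$ at $x_0$. So near $x_0$ it leaves every cone $\{(x-x_0)\cdot\nu\ge|x-x_0|\cos\beta\}$ with $\beta<\pi/2$. Your chain $\kappa(r^2-|x-x_0-r\nu|^2)_+\le 2\kappa r\,(x-x_0)\cdot\nu\le v_0$ therefore uses the cone bound exactly where it is unavailable. Moreover, a boundary that is only $C^1$ need not admit an interior tangent ball. An example is locally $\{x_n>|x'|^{3/2}\}$ with $v_0=(x_n-|x'|^{3/2})_+$, which is $C^1$ up to the boundary with $|\nabla v_0|\ge 1$. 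There the cap is positive at points where $v_0=0$, so the initial ordering $B(\cdot,0)\le v_0$ is false.

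Your fallback, a ball strictly inside the cone that is reached after a short time, is the right fix and is precisely the paper's proof. But the decisive estimate is missing, and your description points the wrong way. In your normalization $\kappa(\rho^2-|x-c|^2)_+$, the Barenblatt pressure has time scale $t_0=1/(2(n(m-1)+2)\kappa)$, independent of $\rho$, and support radius $\rho\,(1+t/t_0)^{1/(n(m-1)+2)}$. So a small $\kappa$ makes the arrival time large. For a fixed ball at fixed distance from $x_0$, the constraint $\kappa\rho^2\le\min v_0$ keeps the arrival time bounded below, so adjusting $\kappa$ alone cannot make it small.

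You must shrink the configuration. With $\gamma<\sin\beta$, the ball $B_{\gamma r}(x_0+r\nu)$ lies in the cone for small $r$, so the linear growth gives $v_0\ge \frac c2(1-\gamma)r$ there. This permits $\kappa\sim c/(\gamma^2 r)$. The support then reaches $x_0$, at distance $r$ from the center, by time $t_0\bigl(\gamma^{-(n(m-1)+2)}-1\bigr)=O(r)$. Letting $r\to0$ gives $v(x_0,t)>0$ for all $t>0$.

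This $O(r)$ bound is where \eqref{ndc} is really used. With only quadratic growth of $v_0$ one gets $\kappa=O(1)$ and an $O(1)$ arrival time, consistent with waiting-time behaviour. The paper does exactly this, with $y_r\in B_r(X)$, $B_{\gamma r}(y_r)\Subset\Omega(0)$, $v_0\ge c_1r$ there, and $t_r\sim r$.

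Finally, uniformity in $x_0$ is unnecessary. Once every point of the compact set $\overline{\Omega(0)}$ lies in the open set $\Omega(t)$, compact containment is automatic.
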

\begin{proof}
	For each initial boundary point $X\in\partial \ome(0)$, the condition \eqref{ndc} gives $\nabla v_0(X)\geq c$. By $C^1$ smoothness of $v_0$ and compactness of $\ome(0)$, there are $r_0$, $c_1$, $\gamma>0$ such that for every $X\in\partial \ome(0)$ and $0<r<r_0$ we can choose $y_r\in\ome(0)$ with
	\begin{align*}
		y_r\in B_{r}(X),\qquad B_{\gamma r}(y_r)\Subset\ome(0),\qquad v_0\geq c_1r\quad\text{on $B_{\gamma r}(y_r)$}.
	\end{align*}

Denote $R=\gamma r/2$ and consider the Barenblatt solution
\begin{align*}
	B(x,t)=\frac{c_1r}{2}F(t)^{\frac{-n(m-1)}{n(m-1)+2}}\left(1-\frac{|x-y_r|^2}{R^2F(t)^{\frac{2}{n(m-1)+2}}}\right)_+,
\end{align*}
where $F(t)=1+c_1 r\left(n(m-1)+2\right)t/R^2$. It is easy to check $B(x,t)$ is the solution of \eqref{pme v} and $B(x,0)\leq v_0(x)$ on $\R^n$. Then the comparison principle give that $B\leq v$ on $\R^n\times [0,\infty)$. The radius of the  support of $B(x,t)$ is $RF(t)^{1/(n(m-1)+2)}$. The computations give when 
\begin{align*}
	t>t_r:=\frac{R^2}{c_1 r (n(m-1)+2)}\left[\left(\frac{r}{R}\right)^{n(m-1)+2}-1\right],
\end{align*}
we have $RF(t)^{1/(n(m-1)+2)}>r$. Thus $v(X,t)>0$ for every $X\in\partial \ome(0)$ and $t>t_r$.

Now for any fixed $T>0$, since $t_r\sim r$, we can choose $r$ small enough such that $t_r<T$. Therefore, $v(X,T)>0$ for $X\in\partial \ome(0)$. As for the inner points of $\ome(0)$, Proposition \ref{AB prop} gives that they remain positive. Hence we finish the proof.
\end{proof}

Now we prove Theorem \ref{thm 1.2}.
\begin{proof}[Proof of Theorem \ref{thm 1.2}]
	For any $(q,t_0)\in\Gamma\cap \left(\R^n\times(0,\infty)\right)$ with $q\notin\overline{\ome(0)}$, Proposition \ref{monoto} gives a local cone $\mathcal C_{-e(x),\theta}$ on which $v$ is non-decreasing. Since $\ome(0)$ is convex, it follows that $e(x)$ is Lipschitz continuous away from $\partial \ome(0)$. Thus, $v$ satisfies local cone condition at the point $(q,t_0)$. After translation and scaling, Theorem \ref{thm 1.1} and Theorem \ref{CF thm} give the desired conclusion.
	
	If we assume additionally that $v_0\in C^1(\overline{\ome(0)})$ satisfies condition \eqref{ndc}, then Proposition \ref{ndc force move}	gives that for any $(q,t_0)\in\Gamma\cap \left(\R^n\times(0,\infty)\right)$, we obtain $q\notin \overline{\ome(0)}$. Combing with previous proof we finish the proof of Theorem \ref{thm 1.2}.
\end{proof}

To the end of this paper, we give a counterexample that shows that singularities will occur for star-shaped initial support in dimension two.
\begin{proposition}
	Let $n=2$. For any $m>1$, there is a smooth initial datum $v_0\in C^{\infty}(\overline{\ome(0)})$ with star-shaped support $\ome(0)$ satisfying condition \eqref{ndc} such that singularity will occur at a finite time.
\end{proposition}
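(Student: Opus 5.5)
We will build the counterexample by forcing a collision, or focusing, of the free boundary in the plane. The paper itself names such singularities as the ones the local cone condition rules out. The initial support $\Omega(0)\subset\R^2$ will be star-shaped with respect to the origin but strongly non-convex. Concretely, take a thin ``horseshoe'' or ``V'' shape: two long thin arms emanating from a central disc around the origin, separated by a narrow wedge-shaped gap $W$. The gap is the complement region bounded by two nearly parallel segments meeting near the origin, and it opens outward. Star-shapedness holds as long as every ray from $0$ meets $\partial\Omega(0)$ once, which is arranged by letting the gap be a sector-like notch whose sides are radial segments. For $v_0$ we take a smooth function on $\overline{\Omega(0)}$ comparable to $\operatorname{dist}(x,\partial\Omega(0))$ near the boundary, so that $|\nabla v_0|\geq c$ on $\partial\Omega(0)$ and \eqref{ndc} holds. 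We make the arms carry large mass, i.e. $v_0$ large, say of size $N$, in the interior of the arms.

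The key steps, in order, are as follows. First, we use barriers to show that the gap $W$ fills in by a finite time $T^*$. A Barenblatt subsolution, as in the proof of Proposition \ref{ndc force move}, is placed inside each arm, with centre $y$ near the gap and height $\sim N$. Its support radius grows like $(Nt)^{1/(n(m-1)+2)}$, so for $N$ large it covers the whole width of the gap before a time $T_1$ that is small compared with the time the far tip of the gap needs to be reached. Second, we use supersolutions to show that the gap is not filled instantaneously. Near the outer end of the notch, take a region where $v_0$ is small and bound $v$ there by a translated travelling-wave or Barenblatt-type supersolution built from the bounded $L^\infty$ norm. By finite propagation with the speed bound $|\nabla v|\lesssim$ (local sup) from Caffarelli--Friedman, a point $p$ deep in the gap satisfies $p\notin\Omega(t)$ for $t<T_0$ with $T_0>0$. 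Hence the first time $T^*$ at which $p\in\overline{\Omega(T^*)}$ lies in $(0,\infty)$, and at that time the complement component containing $p$ shrinks to a point, or to a segment, from two opposite sides.

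Third, we show that at the first filling time smoothness must fail. Suppose $\Gamma$ were smooth with $\nabla v\neq0$ near $(p^*,T^*)$, where $p^*$ is a last point of the vanishing complement component. Then $\Omega(t)$ would locally be a smooth moving domain whose complement near $p^*$ shrinks to a point, and a smooth level-set description $\{t=S(x)\}$ with $\nabla S=\nabla v/|\nabla v|^2$ forces $\nabla S(p^*)=0$, since $S$ has a local maximum there. This contradicts $|\nabla v|<\infty$ via Darcy's law $S$ Lipschitz with $|\nabla S|=1/|\nabla v|\geq c$. Alternatively we argue that two smooth fronts meeting with nonzero opposite normals cannot form a $C^1$ hypersurface. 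Either way $\Gamma$ is not smooth at $(p^*,T^*)$, which is a singularity at finite time. Note that $p^*\notin\overline{\Omega(0)}$, so this genuinely contradicts the conclusion of Theorem \ref{thm 1.2} with convexity replaced by star-shapedness.

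The main obstacle will be making the closing happen as a genuine interior ``hole'' collapse rather than the gap simply retreating outward from its open end. If the notch is open to the exterior, the fronts from the two arms could just sweep the notch out smoothly. To prevent this, we first let the tips of the arms spread and seal the mouth of the notch, trapping a bounded hole, using the large mass there and a Barenblatt subsolution at the tips. We keep the inner part of the notch far from high mass, so the trapped hole persists for a positive time before being filled. Careful quantitative choice of the widths, the length and $N$, comparing the subsolution radius growth with the supersolution speed bounds, is where the real work lies.
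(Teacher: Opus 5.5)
Your overall architecture matches the paper's: a long, narrow V-shaped support, star-shaped about its vertex region, with large values only at the two tips, Barenblatt subsolutions there to close the mouth, and a supersolution keeping a point of the notch empty. But the step everything rests on is missing.

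First, the conclusion of your Step 2 is false as written. The first time a fixed point $p$ of the notch enters $\overline{\Omega(t)}$ is generically just the moment a smooth front passes over $p$. No complement component needs to collapse there. You notice this in your last paragraph. After that correction, the proof depends entirely on showing that at some time $T_1$ the mouth is closed while some $p$ is still outside $\overline{\Omega(T_1)}$. That is exactly what you leave as ``where the real work lies.''

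Second, the heuristic you give for this comparison does not decide it, and the missing idea is a separation of scales plus a global hole-type supersolution.

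\begin{itemize}
\item The pressure equation is invariant under $v\mapsto \lambda v(x,\lambda t)$. Enlarging the amplitude (``size $N$'') therefore rescales all event times by the same factor and cannot by itself make sealing precede filling.
\item A supersolution built from $\sup v\le N$ near $p$ protects $p$ only for a time of order $\operatorname{dist}(p,\Omega(0))^2/N$.
\item Your tip Barenblatt subsolutions, with height at most $N$, need a time $\gtrsim w^2/N$ to cross a mouth of width $w$.
\item In a notch that widens outward, $\operatorname{dist}(p,\Omega(0))\lesssim w$, so these two estimates do not separate.
\item What settles the competition is that the large values must sit at a distance $D\gg w$ from $p$, with a supersolution that feels them only at that distance.
\end{itemize}

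The paper does this with the explicit solution
\begin{align*}
W(x,t)=\frac{1}{4m(2-t)}\left[|x|^2-\left(\frac{2-t}{2}\right)^{1/m}\right]_+ .
\end{align*}
It vanishes on a ball whose radius at $t=1$ is $(1/2)^{1/(2m)}$, independently of the geometry. Because $W$ grows quadratically, it dominates $v_0=\varepsilon\phi+B(\chi_1+\chi_2)$ once the tips, at distance about $A$, are far enough away and $\varepsilon$ is small. Meanwhile the Barenblatt solutions $w_{x_1},w_{x_2}$ reach radius $13$ at $t=1$ and cover $[x_1,x_2]$.

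Once a trapped hole is established, your endgame differs from the paper's, and it is a valid alternative:
\begin{itemize}
\item The paper argues topologically. $\R^2\setminus\Omega(0)$ is connected by star-shapedness, while $\R^2\setminus\Omega(1)$ has a bounded component containing that ball, so the topology of the support changes and the free boundary cannot stay smooth with $\nabla v\neq0$ on $(0,1]$.
\item Your argument is more local. At a last point $p^*$ of the hole the arrival time $S$ has a local maximum, forcing $\nabla S(p^*)=0$, which is incompatible with $|\nabla S|=1/|\nabla v|$ for a smooth front.
\item Your route additionally needs that the hole is eventually filled, for example by expanding Barenblatt subsolutions.
\item It only becomes available after the missing sealing estimate above.
\end{itemize}
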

\begin{proof}
	First, we claim for all large $A>0$ there is a smooth open domain $\ome(0)$, star-shaped with respect to point $z=\left(-A,0\right)$ such that for points $x_1=(A,12)$, $x_2=(A,-12)$
	\begin{align*}
		B_{2}(x_1)\Subset\ome(0),\qquad B_{2}(x_2)\Subset\ome(0),\qquad \overline{\ome(0)}\cap \overline{B_2}=\varnothing. 
	\end{align*}
	Indeed, denote
	\begin{align*}
		\operatorname{conv}\left(A,B\right)=\left\{(1-\lam)a+\lam b:a\in A,\ b\in B,\ \lam\in[0,1] \right\}.
	\end{align*}
	Define
	\begin{align*}
		\ome_1=\operatorname{conv}\left(\overline{B_3(z)},\overline{B_3(x_1)}\right),\qquad\ome_2=\operatorname{conv}\left(\overline{B_3(z)},\overline{B_3(x_2)}\right),\qquad \ome_0=\ome_1\cup\ome_2.
	\end{align*}
	It is easy to check that $\ome_0$ is star-shaped and $B_2(x_1)\Subset\ome_0$, $B_2(x_2)\Subset \ome_0$. 
	
	Now we check $\overline{\Omega_0}\cap \overline{B_2}=\varnothing$. Denote the distance from the origin to the lines $[z,x_1]$ and $[z,x_2]$ as $d_L$. Then
	\begin{align*}
		d_L=\frac{12A}{\sqrt{4A^2+144}}\to 6 \quad\text{as $A\to\infty$}.
	\end{align*}
	Hence, for large $A$ we have $d_L> 5$. Since the radius of $\ome_1$ and $\ome_2$ equals to $3$, we have $\operatorname{dist}(\ome_0,0)>5-3=2$. Therefore, we prove that $\overline{\ome_0}\cap \overline{B_2}=\varnothing$. Finally, smoothing $\ome_0$ gives desired $\ome(0)$. Thus we prove claim.

	After constructing the initial support, we start to construct the initial datum. But first we construct two explicit solution. Set
	\begin{align*}
		\tau=\frac{1}{13^{2m}-1}.
	\end{align*}
	For any point $q\in\R^2$, define
	\begin{align*}
		w_q(x,t)=&\frac{1}{4m(t+\tau)}\left[\left(\frac{t+\tau}{\tau}\right)^{1/m}-|x-q|^2\right]_+,\\
		 W(x,t)=&\frac{1}{4m(2-t)}\left[|x|^2-\left(\frac{2-t}{2}\right)^{1/m}\right]_+.
	\end{align*}
It is easy to check that $w_q$ and $W$ is the solution of \eqref{pme v}. Moreover, the support of $w_q$ is a ball with radius
\begin{align*}
	r(t)=\left(\frac{t+\tau}{\tau}\right)^{\frac1{2m}},\qquad r(0)=1,\qquad r(1)=13.
\end{align*}
And $W$ vanishes exactly on a ball with radius
\begin{align*}
	R(t)=\left(\frac{2-t}{2}\right)^{\frac1{2m}},\qquad R(0)=1.
\end{align*}

Let $\phi$ be a smooth function with support $\ome(0)$ satisfying \eqref{ndc}. Choose two smooth cut-off functions $\chi_{1}\in C_{c}^{\infty}\left(B_2(x_1)\right)$, $\chi_2\in C_{c}^{\infty}\left(B_2(x_2)\right)$ with $\chi_1=1$ on $B_1(x_1)$ and $\chi_2=1$ on $B_1(x_2)$. Define
\begin{align*}
	v_0=\eps\phi+B\left(\chi_1+\chi_2\right),
\end{align*}
where $B=(13^{2m}-1)/(4m)$ and $\eps$ is a small constant to be chosen later. By definition, $v_0$ is smooth, $\operatorname{supp} v_0=\ome(0)$ and $v_0$ preserves \eqref{ndc} for constant $\eps c$. We then show that for all $x\in\R^2$
\begin{align*}
	v_0\geq w_{x_1}(x,0)=:w_1,\qquad v_0\geq w_{x_2}(x,0)=:w_2,\qquad v_0\leq W(x,0).
\end{align*}
In fact, 
\begin{align*}
	w_1(x,0)\leq \frac{1}{4m\tau}=B\quad\text{on $B_{1}(x_1)$},\qquad w_2(x,0)\leq \frac{1}{4m\tau}=B\quad\text{on $B_1(x_2)$}.
\end{align*}
Then,
\begin{align*}
	v_0\geq w_1(x,0),\qquad v_0\geq w_{2}(x,0)\quad\text{on $\R^2$}.
\end{align*}
Notice that $R(0)=1$, so $W(x,0)$ has positive lower bound on $\ome(0)$. On $B_2(x_1)$ and $B_2(x_2)$ we can choose $A$ large enough such that $W(x,0)\geq 2B$ on both $B_2(x_1)$ and $B_2(x_2)$. Therefore, by choosing $\eps$ small enough we have
\begin{align*}
	v_0\leq W(x,0)\quad\text{on $\R^2$}.
\end{align*}

With all these inequalities, comparison principle gives
\begin{align*}
	v\geq w_1,\qquad v\geq w_2,\qquad v\leq W\quad\text{on $\R^2\times[0,1]$}. 
\end{align*}
When $t=1$, since $r(1)=13$, the line $[x_1,x_2]\subset \ome(1)$. On the other hand, since $R(1)=(1/2)^{1/(2m)}>0$, we have $v(x,1)=0$ on $B_{R(1)}$. Then at $t=1$, $\R^2\backslash \ome(1)$ has two connected component, while at $t=1$, $\R^2\backslash \ome(0)$ has only one. Therefore, singularity must occur at a finite time $0<T<1$.
\end{proof}


\end{document}